\newtheorem{thm}{Theorem}[section]
\newtheorem{pro}[thm]{Proposition}
\newtheorem{lem}[thm]{Lemma}
\newtheorem{cor}[thm]{Corollary}
\newtheorem{thm&def}[thm]{Theorem \& Definition}
\newtheorem{lem&def}[thm]{Lemma \& Definition}
\theoremstyle{definition}
\newtheorem{defi}[thm]{Definition}                   %[subsection]{Definition}
\newtheorem{exa}[thm]{Example}
\newtheorem{rmk}[thm]{Remark}
\newcommand{\ncm}{\newcommand}
\ncm{\decorenum}[1]{

\begin{enumerate}}
\ncm{\decorenumend}{
\end{enumerate}

}
\ncm{\decorenumprime}[1]{

\begin{enumerate}}
\ncm{\decorenumprimeend}{
\end{enumerate}

}
\ncm{\Cat}{\mathsf{Cat}}
\ncm{\CAT}{\mathsf{CAT}}
\ncm{\ob}{\operatorname{ob}}
\ncm{\Nat}{\operatorname{Nat}}
\ncm{\Set}{\mathsf{Set}}
\ncm{\Ab}{\mathsf{Ab}}
\ncm{\sfM}{\mathsf{M}}
\ncm{\Add}{\mathsf{Add}}
\ncm{\Mnd}{\mathsf{Mnd}}
\ncm{\Fun}{\mathsf{Fun}}
\ncm{\MonCat}{\mathsf{MonCat}}
\ncm{\MonFun}{\mathsf{MonFun}}
\ncm{\Elt}{\mathsf{Elt}\,}
\ncm{\Sub}{\mathsf{Sub}\,}
\ncm{\Flat}{\mathsf{Flat}}
\ncm{\add}{\text{\Large\textsl{a}}}
\ncm{\proj}{\mathsf{proj}}
\ncm{\Col}{\mathsf{Col}\,}
\ncm{\fgmod}[1]{\mathsf{mod}\text{-}#1}
\ncm{\Split}{\mathsf{split}}
\ncm{\Cl}{\mathsf{Cl}}
\ncm{\ord}{\mathsf{ord}}
\ncm{\RightMonCat}{\textsf{r-MonCat}}
\ncm{\LeftMonCat}{\textsf{l-MonCat}}
\ncm{\RightOpmonCat}{\textsf{r-OpmonCat}}
\ncm{\LeftOpmonCat}{\textsf{l-OpmonCat}}
\ncm{\LaxCmd}{\mathsf{Lax}\text{-}\mathsf{Cmd}}
\ncm{\A}{\mathcal{A}}
\ncm{\B}{\mathcal{B}}
\ncm{\C}{\mathcal{C}}
\ncm{\D}{\mathcal{D}}
\ncm{\E}{\mathcal{E}}
\ncm{\F}{\mathcal{F}}
\ncm{\G}{\mathcal{G}}
\ncm{\Ha}{\mathcal{H}}
\ncm{\I}{\mathcal{I}}
\ncm{\J}{\mathcal{J}}
\ncm{\K}{\mathcal{K}}
\ncm{\Ll}{\mathcal{L}}
\ncm{\M}{\mathcal{M}}
\ncm{\N}{\mathcal{N}}
\ncm{\Ou}{\mathcal{O}}
\ncm{\Pee}{\mathcal{P}}
\ncm{\R}{\mathcal{R}}
\ncm{\X}{\mathcal{X}}
\ncm{\V}{\mathcal{V}}
\ncm{\U}{\mathcal{U}}
\ncm{\T}{\mathcal{T}}
\ncm{\dom}{\operatorname{dom}}
\ncm{\cod}{\operatorname{cod}}
\ncm{\End}{\operatorname{End}}
\ncm{\Aut}{\operatorname{Aut}}
\ncm{\Hom}{\operatorname{Hom}}
\ncm{\kernel}{\operatorname{ker}}
\ncm{\Ker}{\operatorname{Ker}}
\ncm{\coker}{\operatorname{coker}}
\ncm{\Coker}{\operatorname{Coker}}
\ncm{\im}{\operatorname{im}}
\ncm{\Img}{\operatorname{Im}}
\ncm{\coim}{\operatorname{coim}}
\ncm{\id}{\operatorname{id}}
\ncm{\Center}{\operatorname{Center}}
\ncm{\colim}{\operatorname{colim}}
\ncm{\Colim}[1]{\underset{#1}{\operatorname{colim}}}
\ncm{\Lan}{\operatorname{Lan}}
\ncm{\Cone}{\operatorname{Cone}}
\ncm{\ev}{\operatorname{ev}}
\ncm{\cf}{\operatorname{cf}}
\ncm{\hgt}{\operatorname{ht}}
\ncm{\ci}{\circ}
\ncm{\bu}{\bullet}
\ncm{\bo}{\,\Box\,}
\ncm{\ot}{\otimes}
\ncm{\x}{\times}
\ncm{\smp}{\ast}
\ncm{\smpq}{\smp_q}
\ncm{\smpz}{\smp_z}
\ncm{\rmpr}{\,\triangledown\,}
\ncm{\rmpl}{\vartriangle}
\ncm{\lmpr}{\blacktriangledown}
\ncm{\lmpl}{\blacktriangle}
\ncm{\hor}[1]{\Diamond}%{\underset{\ssst #1}{\diamond}}
\ncm{\ex}[1]{\underset{\ssst #1}{\times}}
\ncm{\am}[1]{\underset{\scriptscriptstyle #1}{\ot}}
\ncm{\amo}[1]{\underset{\scriptstyle #1}{\ot}}
\ncm{\mash}{\Pisymbol{psy}{35}}
\ncm{\mashed}[1]{\underset{\scriptscriptstyle #1}{\Pisymbol{psy}{35}}}
\ncm{\cross}[1]{\underset{\scriptstyle #1}{\rtimes}}
\ncm{\rarr}[1]{\stackrel{#1}{\longrightarrow}}
\ncm{\larr}[1]{\stackrel{#1}{\longleftarrow}}
\ncm{\mapsot}{\leftarrow\!\!\!\raisebox{1pt}{$\scriptscriptstyle |$}}
\ncm{\oR}{\am{R}}
\ncm{\oS}{\am{S}}
\ncm{\cop}{\Delta}
\ncm{\oneT}{^{(1)}}
\ncm{\twoT}{^{(2)}}
\ncm{\threeT}{^{(3)}}
\ncm{\oneB}{_{(1)}}
\ncm{\twoB}{_{(2)}}
\ncm{\threeB}{_{(3)}}
\ncm{\eps}{\varepsilon}
\ncm{\bra}{\langle}
\ncm{\ket}{\rangle}
\ncm{\asso}{\mathbf{a}}
\ncm{\luni}{\mathbf{l}}
\ncm{\runi}{\mathbf{r}}
\ncm{\tet}{t}
\ncm{\iso}{\stackrel{\sim}{\rightarrow}}
\ncm{\iiso}{\rarr{\sim}}
\ncm{\ract}{\triangleleft}
\ncm{\lact}{\triangleright}
\ncm{\under}{\mbox{\,\rm\_}\,}
\ncm{\adj}{\dashv}
\ncm{\adjoint}{\dashv}
\ncm{\into}{\hookrightarrow}
\ncm{\can}{\mathrm{can}}
\ncm{\fgp}{\mathrm{fgp}}
\ncm{\op}{\mathrm{op}}
\ncm{\coop}{\mathrm{coop}}
\ncm{\rev}{\mathrm{rev}}
\ncm{\sst}{\scriptstyle}
\ncm{\ssst}{\scriptscriptstyle}
\ncm{\sH}{{\ssst H}}
\ncm{\sR}{{\ssst R}}
\ncm{\eqby}[1]{\stackrel{(\ref{#1})}{=}}
\ncm{\lef}{{\ssst <}}
\ncm{\righ}{{\ssst >}}
\ncm{\one}{\mathbf{1}}
\ncm{\NN}{\mathbb{N}}
\ncm{\ZZ}{\mathbb{Z}}
\ncm{\QQ}{\mathbb{Q}}
\ncm{\GG}{\mathbf{G}}
\ncm{\FF}{\mathbb{F}}
\ncm{\TT}{\mathsf{T}}
\ncm{\Q}{\mathsf{Q}}
\ncm{\bfT}{\mathbf{T}}
\ncm{\bfQ}{\mathbf{Q}}
\ncm{\ch}{\mathbf{H}}
\ncm{\pisharp}{\Pisymbol{psy}{35}}
\ncm{\Pre}{\hat\C}
\ncm{\She}{\mathsf{Sh}}
\ncm{\Sig}{{\Sigma}}
\ncm{\icog}{J}
\ncm{\eL}{\mathbf{L}}
\ncm{\eR}{\mathbf{R}}
\ncm{\parallelpair}{
\parbox{43pt}{
\begin{picture}(43,8)
\put(3,6){\vector(1,0){37}}
\put(3,2){\vector(1,0){37}}
\end{picture}
}}
\ncm{\pair}[2]{\overset{#1}{\underset{#2}{\parallelpair}}}
\ncm{\longparallelpair}{
\parbox{63pt}{
\begin{picture}(63,8)
\put(3,6){\vector(1,0){57}}
\put(3,2){\vector(1,0){57}}
\end{picture}
}}
\ncm{\longpair}[2]{\overset{#1}{\underset{#2}{\longparallelpair}}}
\ncm{\longerparallelpair}{
\parbox{83pt}{
\begin{picture}(83,8)
\put(3,6){\vector(1,0){77}}
\put(3,2){\vector(1,0){77}}
\end{picture}
}}
\ncm{\longerpair}[2]{\overset{#1}{\underset{#2}{\longerparallelpair}}}
\ncm{\longrightarrowtail}{
\parbox{40pt}{
\begin{picture}(40,8)
\put(6,4){\line(-1,1){1.5}}
\put(6,4){\line(-1,-1){1.5}}
\put(6,4){\vector(1,0){31}}
\end{picture}
}}
\ncm{\longerrightarrowtail}{
\parbox{60pt}{
\begin{picture}(60,8)
\put(6,4){\line(-1,1){1.5}}
\put(6,4){\line(-1,-1){1.5}}
\put(6,4){\vector(1,0){51}}
\end{picture}
}}
\ncm{\longrarr}[1]{
\overset{#1}{
\parbox{40pt}{
\begin{picture}(40,8)
\put(3,4){\vector(1,0){34}}
\end{picture}
}}}
\ncm{\longerrarr}[1]{
\overset{#1}{
\parbox{70pt}{
\begin{picture}(70,8)
\put(3,4){\vector(1,0){64}}
\end{picture}
}}}
\ncm{\longlarr}[1]{\overset{#1}{\longleftarrow}}
\ncm{\antiparallelpair}{
\parbox{23pt}{
\begin{picture}(23,4)
\put(3,3){\vector(1,0){17}}
\put(20,1){\vector(-1,0){17}}
\end{picture}
}}
\ncm{\invantiparallelpair}{
\parbox{23pt}{
\begin{picture}(23,4)
\put(3,1){\vector(1,0){17}}
\put(20,3){\vector(-1,0){17}}
\end{picture}
}}
\ncm{\dualpair}[2]{\overset{#1}{\underset{#2}{\antiparallelpair}}}
\ncm{\invdualpair}[2]{\overset{#1}{\underset{#2}{\invantiparallelpair}}}
\ncm{\coantiparallelpair}{
\parbox{23pt}{
\begin{picture}(23,4)
\put(3,1){\vector(1,0){17}}
\put(20,4){\vector(-1,0){17}}
\end{picture}
}}
\ncm{\codualpair}[2]{\overset{#1}{\underset{#2}{\coantiparallelpair}}}
\ncm{\binarydirectsum}[7]{#1\codualpair{#2}{#3}#4\dualpair{#5}{#6}#7}
\ncm{\equalizer}[1]{\overset{#1}{\longrightarrowtail}}
\ncm{\epi}[1]{\overset{#1}{\twoheadrightarrow}}
\ncm{\coequalizer}[1]{
\overset{#1}{
\parbox{40pt}{
\begin{picture}(40,8)
\put(2,4){\vector(1,0){32}} \put(37,4){\vector(1,0){0}}
\end{picture}
}}}
\ncm{\mono}[1]{\overset{#1}{\rightarrowtail}}
\begin{document}

\title{Skew-monoidal categories and bialgebroids}
\author{Korn\'el Szlach\'anyi} 
\date{}
\thanks{Supported by the Hungarian Scientific Research Fund, OTKA 68195.\newline
\indent Appeared in Adv. Math. 231 (2012) 1694-1730}
\begin{abstract}Skew-monoidal categories arise when the associator and the left and right units of a monoidal category are,
in a specific way, not invertible. We prove that the closed skew-monoidal structures on the category of right $R$-modules are
precisely the right bialgebroids over the ring $R$. These skew-monoidal structures induce quotient skew-monoidal structures on the category of $R$-$R$-bimodules and this leads to the following generalization: Opmonoidal monads on a monoidal category
correspond to skew-monoidal structures with the same unit object which are compatible with
the ordinary monoidal structure by means of a natural distributive law. 
Pursuing a Theorem of Day and Street we also discuss monoidal lax comonads to describe the comodule categories of bialgebroids
beyond the flat case.
\end{abstract}
\address{Wigner Research Centre for Physics of the Hungarian Academy of Sciences,\newline
\noindent 1525 Budapest, P.O.Box 49, Hungary}
\email{szlachanyi.kornel@wigner.mta.hu}

\maketitle

\section{Introduction}

Bialgebroids \cite{Takeuchi, Lu, Xu, KSz} are generalizations of bialgebras to non-commutative base ring. 
By replacing the commutative base ring $k$ of a bialgebra with a non-commutative ring $R$ the symmetric role of the monoid
and comonoid structure is lost: A bialgebroid $H$ over $R$ is a comonoid $H\rarr{\Delta}H\oR H$ in the category $_R\Ab_R$ of $R$-bimodules but a monoid $H\ot_{R^e}H\rarr{m}H$ in the category of 
$R^e:=R^\op\ot R$-bimodules. The compatibility condition between the $R^e$-ring and the $R$-coring structure is too complicated to witness about something fundamental which may motivate to search for other generalizations of bialgebras \cite{Mes-Wis}.
However, if we look at the functor $\under\ot_{R^e}H$ on the monoidal category $\Ab_{R^e}=\,_R\Ab_R$ instead of the object
$H\in \,_{R^e}\Ab_{R^e}$ itself, the condition becomes amazingly simple.
As it was observed in \cite{Sz: EM} a bimodule $H$ is a bialgebroid precisely if $\under\ot_{R^e}H$ is an opmonoidal monad \cite{Moerdijk, McCrudden}. 

The language of monads tells us that the modules over the bialgebroid $H$ have to be the objects of the Eilenberg-Moore category
of the monad $\under\ot_{R^e}H$. Opmonoidality is then precisely the structure that makes the category of modules monoidal and
the Eilenberg-Moore forgetful functor strict monoidal.
This gives nothing new with respect to the `classical' algebraic formalism:
The Eilenberg-Moore category is the category of $H$-modules ($H$ as an $R^e$-ring). But what are the comodules of an opmonoidal monad? The monadic language gives no hint. Classically one knows that there is the category of comodules over the $R$-coring $H$ 
and several authors argued \cite{Phung, Bohm, Ba-Sz} that this category becomes monoidal with a strict monoidal forgetful functor
to $_R\Ab_R$. This comodule category, however, is not the Eilenberg-Moore category of a monoidal comonad (unless $H$ is flat
as left $R$-module) which is a further asymmetry between modules and comodules of bialgebroids. Instead of monoidal comonad
there is a lax monoidal structure given by Takeuchi's $\x_R$-product with respect to which bialgebroids can be seen as comonoids
\cite{Day-Street} and therefore have comodules in a natural way. 

In this paper, we propose to consider a fragment of the structure of bialgebroids which lets their modules and comodules seen symmetrically or, better to say, dually. This fragment, called a skew-monoidal category, has left and right versions just like bialgebroids have \cite{KSz}. A right-monoidal category consists of a category $\M$, a functor $\M\x\M\rarr{\smp}\M$, 
an object $R\in\M$ and comparison natural transformations
\[
L\smp(M\smp N)\rarr{\gamma}(L\smp M)\smp N,\qquad M\rarr{\eta}R\smp M,\qquad M\smp R\rarr{\eps}M
\]
satisfying the usual pentagon and triangle equations of a monoidal category without assuming, however, invertibility of either
$\gamma$, $\eta$ or $\eps$. In left-monoidal categories all comparisons go in the opposite way and the names $\eta$ and $\eps$ are interchanged.
For a right bialgebroid $H$ over $R$ the category $\M$ is the category $\Ab_R$ of right $R$-modules, $R$ is the
regular right $R$-module, $\eps$ and $\eta$ are essentially the counit and the source map of $H$, respectively, while the skew-associator $\gamma$ is the Galois map or canonical map $H\oR H\to H\bar\oR H$ built of the multiplication and comultiplication of $H$. What is not so simple to explain is the skew-monoidal product $\smp$.

The advantage of looking at the skew-monoidal category $\M$ instead of the bialgebroid $H$ is that it encodes all information on the 
categories of right $H$-modules and of right $H$-comodules as simply as the Eilenberg-Moore categories of the canonical monad $T=R\smp\under$ and of the canonical comonad $Q=\under\smp R$ on $\M$. The disadvantage is that their monoidal structure is not seen. It is hidden in the properties of the category $\M$ together with all asymmetries between modules and comodules encoded in exactness properties of $\M$ and $\smp$. 

Generalizations of monoidal categories or bicategories by relaxing invertibility of the comparison cells are not unknown in the literature. Burroni's \textit{pseudocategory} \cite{Burroni} has comparison cells $(L\smp M)\smp N\to L\smp (M\smp N)$,
$M\to R\smp M$, $M\to M\smp R$ and Grandis'  \textit{d-lax 2-category} \cite{Grandis} has 
$L\smp(M\smp N)\to(L\smp M)\smp N$, $R\smp M\to M$ and $M\to M\smp R$ therefore they are neither the left- nor the right-monoidal structures of the present paper. Blute, Cockett and Seely introduced the notion of \textit{context category} \cite{Blute-Cockett-Seely}
which contains, as part of the structure, precisely what we call right-monoidal comparison cells and the 5 axioms of a right-monoidal category can also be found among their axioms. Lax monoidal categories \cite{Leinster} provide another 'unbiased' way to
generalize monoidal categories which also have non-invertible comparison cells but 
no associator in the ordinary `biased' sense.
Much closer in spirit to our approach is the \textit{2-monoidal} and \textit{duoidal} \textit{categories} 
 \cite{Agu-Mah, Booker-Street} of Aguiar and Mahajan in spite of that they use two ordinary monoidal structures instead of a
`skew' one. For example the tensor square $H=R\smp R$ of the skew-monoidal unit, which is both a $T$-algebra and a $Q$-coalgebra, is reminiscent to a bimonoid in a 2-monoidal category although the precise connection is not clear.
A direct predecessor of our skew-monoidal product 
is the non-unital monoidal product $\smp$ Ross Street constructs in \cite{Street: fusion}
on a braided monoidal category equipped with a \textit{tricocycloid} $H\ot H\iso H\ot H$. Our $\gamma_{R,R,R}$ corresponds to a non-invertible tricocycloid on the object $H\in \,_R\Ab_R$ in a situation where no braiding is present.

The main result of this paper is the following characterization of
bialgebroids (Theorem \ref{thm: bgd}): The closed right-monoidal structures on $\Ab_R$ with skew-monoidal unit $R$ are precisely the right bialgebroids over $R$. Similar statement holds for left-monoidal structures on $_R\Ab$ and left bialgebroids.
The proof of this Theorem has four ingredients: 1. By left closedness of $\smp$ and by the Eilenberg-Watts Theorem there is
a natural isomorphism $M\oR TN\iso M\smp N$. 2. Right exactness of $T$ leads to a lifting of $\smp$ to a skew-monoidal product $\smpq$ on $_R\Ab_R$ which admits an isomorphism $w_{M,N}:M\oR T_qN\iso M\smpq N$ in terms of the canonical monad $T_q$
of the $\smpq$-structure. 3. The $w_{M,N}$ satisfies two coherence conditions in the form of a heptagon and a tetragon equation
which turns out to be equivalent, by our Representability Theorem (Theorem \ref{thm: representability}), to that $T$ is opmonoidal, hence a bimonad on $_R\Ab_R$. 4. Finally, by right closedness of $\smp$ this bimonad is left adjoint hence the bimonad of a bialgebroid by a Theorem of \cite{Sz: EM}. 

The Representability Theorem is valid for any category equipped with two monoidal
structures, an ordinary one $\ot$ and a skew one $\smp$, and says that $\smp$ can be expressed as $M\smp N\cong M\ot TN$
with a bimonad $T$ precisely if the two monoidal structures are related by a \textit{tetrahedral isomorphism}
$L\ot(M\smp N)\to (L\ot M)\smp N$. The skew-monoidal structures on a monoidal category that can be expressed by a bimonad as above are called \textit{representable}. This notion was inspired by the fusion operator formalism of \cite{BLV} since a 
\textit{fusion operator} 
$T(M\ot TN)\to TM\ot TN$ is the essential part of a skew-associator $\gamma_{L,M,N}$. As a matter of fact, for a bimonad $T$ the expression $M\smp N:=M\ot TN$ always defines a skew-monoidal product (Proposition \ref{pro: O-ind}).

Although the Representability Theorem can be dualized and skew-monoidal structures can be constructed from monoidal comonads this Corepresentability Theorem is not applicable to the monoidal (lax) comonad of a bialgebroid because of the different
exactness properties we encounter. It could be applicable, however, to quantum categories \cite{Day-Street} or to bicoalgebroids
\cite{Brz-Mil,Balint}. In order to complete the picture with the comodules of bialgebroids
we use a lax version of the notion of comonad in Section \ref{s: lax Q}, called \textit{cohypomonad} in \cite{Day-Panchadcharam-Street}, and show in Theorem \ref{thm: bfQ monoidal} that at least in case of the skew-monoidal category of a bialgebroid this lax comonad is monoidal. 
These results are not really new but a reformulation in a minimalistic language of what has been called in \cite{Day-Street} a
comonoid in a lax monoidal category provided by the iterated Takeuchi product.

\section{Skew-monoidal categories}

\begin{defi}
A right-monoidal category $\bra\M,\smp,R,\gamma,\eta,\eps\ket$ consists of a category $\M$, a functor $\under\smp\under:\M\x\M\to\M$, an object $R$ of $\M$ and natural transformations
\begin{align*}
\gamma_{L,M,N}&:L\smp(M\smp N)\to(L\smp M)\smp N\\
\eta_M&:M\to R\smp M\\
\eps_M&:M\smp R\to M
\end{align*}
subject to the following axioms: For all objects $K$, $L$, $M$, $N$ 
\begin{align}
\label{SMC1}
(\gamma_{K,L,M}\smp N)\ci\gamma_{K,L\smp M,N}\ci(K\smp\gamma_{L,M,N})&=\gamma_{K\smp L,M,N}\ci\gamma_{K,L,M\smp N}\\
\label{SMC2}
\gamma_{R,M,N}\ci\eta_{M\smp N}&=\eta_M\smp N\\
\label{SMC3}
\eps_{M\smp N}\ci\gamma_{M,N,R}&=M\smp\eps_N\\
\label{SMC4}
(\eps_M\smp N)\ci\gamma_{M,R,N}\ci(M\smp\eta_N)&=M\smp N\\
\label{SMC5}
\eps_R\ci\eta_R&=R
\end{align}
\end{defi}

If we replace $\M$ with $\M^{\op,\rev}$, the category with opposite composition and with right-monoidal product of reversed order,
we obtain again a right-monoidal category, with roles of $\eta$ and $\eps$ interchanged. But replacing $\M$ with either $\M^\op$ or $\M^\rev$ what we obtain is different from the above structure. We call it a left-monoidal category.

If $\gamma$, $\eta$, $\eps$ are isomorphisms we recover the notion of a monoidal category with somewhat strange names for the associator and left and right units. 

\begin{defi} \label{def: skew-mon func}
If $\M$ and $\N$ are right-monoidal categories (with structures denoted by $\smp$, $R$, $\gamma$, $\eta$, $\eps$ in both cases)
then a right-monoidal functor $\M\to\N$ is a triple $\bra F,F_2,F_0\ket$ where $F$ is a functor $\M\to\N$ of the underlying categories, $F_0$ is an arrow $R\to FR$ and $F_2$ is a natural transformation $F_{X,Y}:FX\smp FY\to F(X\smp Y)$ satisfying
\begin{align}
F\gamma_{X,Y,Z}\ci F_{X,Y\smp Z}\ci (FX\smp F_{Y,Z})&=F_{X\smp Y,Z}\ci(F_{X,Y}\smp FZ)\ci\gamma_{FX,FY,FZ}
\label{smf1}\\
F_{R,X}\ci(F_0\smp FX)\ci\eta_{FX}&=F\eta_X
\label{smf2}\\
F\eps_X\ci F_{X,R}\ci(FX\smp F_0)&=\eps_{FX}
\label{smf3}
\end{align}
for all $X,Y,Z\in\M$. Left-monoidal functors are similar functors between left-monoidal categories. They together will be referred
to as skew-monoidal functors.

A skew-opmonoidal functor $\M\to\N$ is a triple $\bra F,F^2,F^0\ket$ where $F$ is a functor $\M\to\N$, $F^0$ is an arrow 
$FR\to R$ and $F^2$ is a natural transformation $F^{X,Y}:F(X\smp Y)\to FX\smp FY$ such that $F$, $F_0:=F^0$ and
$F_{X,Y}:=F^{Y,X}$ define a skew-monoidal functor $\M^{\op,\rev}\to\N^{\op,\rev}$.
\end{defi}

\begin{exa} \label{exa: smf}
Every right-monoidal category $\M$ has a canonical right-monoidal functor into the strict monoidal category $\End\M$ of endofunctors of $\M$. Define $\eL:\M\to\End\M$ by $\eL(M)N =M\smp N$. Then the natural transformation
\[
\eL (M)\eL(N)\longrarr{\gamma_{M,N,\under}}\eL(M\smp N)
\]
together with the arrow $\id_\M\rarr{\eta}\eL(R)$ is a right-monoidal structure on $\eL$. Unlike for monoidal categories when this functor is a strong monoidal embedding, for general $\M$ the functor $\eL$ is not even strong right-monoidal.

Similarly, the functor $\eR(M)N=N\smp M$ has a right-opmonoidal structure as a functor $\M\to\End^\op\M$, to the category $\End\M$ equipped with opposite composition as (strict) monoidal structure.
\end{exa}

Obviously, if both $\M$ and $\N$ are monoidal then the notions of left- and right-(op)monoidal functors coincide and they are precisely the usual (op)monoidal functors.

\begin{defi} \label{def: twist}
If $\smp$ and $\smp'$ are two right-monoidal structures on the same category $\M$ with the same unit object $R$ then a
twist from the $\smp$ structure to the $\smp'$-structure is a natural isomorphism $w_{M,N}:M\smp N\iso M\smp' N$
such that $\bra \id_\M,w,1_R\ket$ is a right-monoidal functor from $\M$ with $\smp'$ to $\M$ with $\smp$ structure. 
\end{defi}

One can define skew-(op)monoidal natural transformations although there is nothing `skew' in them, so we drop the adjective:
\begin{defi}
Let $F,G:\M\to\N$ be skew-monoidal functors. A monoidal natural transformation $\nu:F\to G$ is a natural transformation
of the underlying functors which satisfies
\begin{align}
\nu_{X\smp Y}\ci F_{X,Y}&=G_{X,Y}\ci(\nu_X\smp\nu_Y)\\
\nu_R\ci F_0&=G_0\,.
\end{align}
Opmonoidal transformations are similar transformations between skew-opmonoidal functors.
\end{defi}
The right-monoidal categories together with the right-(op)monoidal functors and (op)monoidal natural transformations
form the 2-category $\RightMonCat$ ($\RightOpmonCat$). Similar 2-categories can be defined for left-monoidal categories.

In ordinary monoidal categories tensoring with the unit object defines rather trivial monads and/or comonads. In the
skew-monoidal setting they are more interesting.
\begin{lem} \label{lem: T Q chi}
Let $\bra\M,\smp,R,\gamma,\eta,\eps\ket$ be a right-monoidal category and define $\mu_M:=(\eps_R\smp M)\ci\gamma_{R,R,M}$
and $\delta_M:=\gamma_{M,R,R}\ci(M\smp  \eta_R)$. Then 
\begin{align*}
T&=\bra R\smp\under,\mu,\eta\ket\\
Q&=\bra\under\smp R,\delta,\eps\ket
\end{align*}
are a monad and a comonad on $\M$, respectively, and $\chi_M:=\gamma_{R,M,R}$ is a (mixed) distributive law $\chi:TQ\to QT$.
%$T$ distributes over $Q$.
\end{lem}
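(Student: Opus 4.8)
The plan is to verify each of the three claims (that $T$ is a monad, $Q$ a comonad, $\chi$ a mixed distributive law) directly from the five axioms \eqref{SMC1}--\eqref{SMC5}, exploiting the evident self-duality of the situation under $\M\mapsto\M^{\op,\rev}$: the comonad statement for $Q$ on $\M$ is exactly the monad statement for $T$ on $\M^{\op,\rev}$ (with $\eta$ and $\eps$ swapped), so it suffices to prove the monad axioms in full and then invoke duality for $Q$.

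For $T=\bra R\smp\under,\mu,\eta\ket$ I would first record naturality of $\mu$, which is immediate since $\mu_M=(\eps_R\smp M)\ci\gamma_{R,R,M}$ is a composite of natural transformations in $M$. The right unit law $\mu_M\ci(T\eta_M)=\id_{R\smp M}$, i.e. $(\eps_R\smp M)\ci\gamma_{R,R,M}\ci(R\smp\eta_M)=R\smp M$, is precisely axiom \eqref{SMC4} specialized to $L=R$. The left unit law $\mu_M\ci(\eta_{R\smp M})=\id_{R\smp M}$ reads $(\eps_R\smp M)\ci\gamma_{R,R,M}\ci\eta_{R\smp M}=R\smp M$; by \eqref{SMC2} with $N:=M$ we have $\gamma_{R,R,M}\ci\eta_{R\smp M}=\eta_R\smp M$, so this becomes $(\eps_R\smp M)\ci(\eta_R\smp M)=(\eps_R\ci\eta_R)\smp M$, which is $R\smp M$ by \eqref{SMC5} and functoriality of $\smp$. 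The associativity law $\mu_M\ci(T\mu_M)=\mu_M\ci(\mu_{R\smp M})$ is the place where the pentagon \eqref{SMC1} enters: expanding both sides in terms of $\gamma$ and $\eps$, one side produces $\gamma_{R,R,M}\ci(R\smp\gamma_{R,R,M})$ and the other $\gamma_{R,R,M}\ci\gamma_{R,R,R\smp M}$ after using naturality of $\gamma$ to slide the $\eps_R$'s past; matching these is exactly \eqref{SMC1} with $K=L=R$, together with naturality of $\eps$ (equivalently \eqref{SMC3} with $N=M$, which gives $\eps_{R\smp M}\ci\gamma_{R,M,R}=\dots$) to handle the remaining $\eps_R$-manipulations.

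For the distributive law, recall $Q=\bra\under\smp R,\delta,\eps\ket$ and $\chi_M=\gamma_{R,M,R}:TQM=R\smp(M\smp R)\to(R\smp M)\smp R=QTM$. A mixed distributive law $\chi:TQ\to QT$ must satisfy four coherence axioms: compatibility with $\mu$ (i.e. $\chi_M\ci(\mu_{QM})=\dots$, using $T$'s multiplication on one side and on the other), compatibility with $\eta$ (i.e. $\chi_M\ci\eta_{QM}=Q\eta_M$), compatibility with $\delta$, and compatibility with $\eps$ (i.e. $\eps_{TM}\ci\chi_M=T\eps_M$). The $\eta$-axiom $\gamma_{R,M,R}\ci\eta_{M\smp R}=\eta_M\smp R$ is exactly \eqref{SMC2} with $N:=R$; the $\eps$-axiom $(R\smp\eps_M)\ci\gamma_{R,M,R}=\eps_{R\smp M}$ — wait, more precisely $\eps_{TM}\ci\chi_M$ where $\eps$ here is the counit of $Q$, namely $\eps_{R\smp M}:(R\smp M)\smp R\to R\smp M$ — gives $\eps_{R\smp M}\ci\gamma_{R,M,R}=R\smp\eps_M$, which is \eqref{SMC3} with $M:=R$, $N:=M$. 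The $\mu$- and $\delta$-axioms are the substantial ones: each unfolds into an instance of the pentagon \eqref{SMC1} (with appropriate objects set to $R$) combined with naturality of $\gamma$ and one of \eqref{SMC2}, \eqref{SMC3}, much as in the associativity argument above.

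I expect the main obstacle to be purely bookkeeping: there is no conceptual difficulty, but the associativity law for $T$ and the $\mu$- and $\delta$-compatibility of $\chi$ each require expanding a composite of three or four instances of $\gamma$ and $\eps$ (or $\eta$), then repeatedly invoking naturality of $\gamma$, $\eta$, $\eps$ to bring the expression into the shape of the pentagon \eqref{SMC1}; keeping the subscripts straight and inserting the correctly-indexed naturality squares is where an error is most likely to creep in. A clean way to organize this — and the one I would adopt — is to pass through the right-monoidal functor $\eL:\M\to\End\M$ of Example \ref{exa: smf}: since $\eL$ is right-monoidal, $\eL(R)=T$ carries a canonical \emph{monad} structure coming from the fact that $R$, being the unit, is a monoid in $\M$ in the appropriate lax sense, and $\mu$, $\eta$ are the images of its multiplication and unit; the distributive law $\chi$ is then the image under $\eL$ of $\gamma_{R,\under,R}$, and the four mixed-distributive-law axioms reduce to the functoriality equations \eqref{smf1}--\eqref{smf3} for $\eL$ evaluated at suitable arguments. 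This reduces the whole lemma to checking that these images assemble correctly, which is bookkeeping of a more transparent kind.
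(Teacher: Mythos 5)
Your proposal is correct and follows essentially the same route as the paper: a direct verification that each monad/comonad/distributive-law equation is an instance of \eqref{SMC1}--\eqref{SMC5} (unit laws from \eqref{SMC4} at $M=R$ and from \eqref{SMC2} plus \eqref{SMC5}; associativity from the pentagon at $K=L=R$ plus naturality; the $\chi$-compatibilities with $\eta$ and $\eps$ from \eqref{SMC2}, \eqref{SMC3}; those with $\mu$ and $\delta$ from the pentagon and naturality of $\gamma$), with your appeal to $\M^{\op,\rev}$ for the comonad half and the $\eL$-functor packaging being legitimate economies the paper does not bother to exploit. The only slip is in your parenthetical for the associativity step: the instance of \eqref{SMC3} needed there is $M=N=R$ composed with $\eps_R$ and naturality of $\eps$ (the paper's \eqref{SMC7}), not the $\eps_{R\smp M}\ci\gamma_{R,M,R}$ identity, which is instead the counit axiom \eqref{SMC17} for $\chi$.
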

\begin{proof}
Inserting $M=N=R$ in (\ref{SMC2}), composing with $\eta_R$ and using naturality of $\eta$ we obtain
\begin{equation}\label{SMC6}
\gamma_{R,R,R}\ci(R\smp\eta_R)\ci\eta_R=(\eta_R\smp R)\ci\eta_R
\end{equation}
In a similar fashion we obtain
\begin{equation}\label{SMC7}
\eps_R\ci(\eps_R\smp R)\ci\gamma_{R,R,R}=\eps_R\ci(R\smp\eps_R)
\end{equation}
using (\ref{SMC3}).
Now we can verify associativity of $\mu$,
\begin{align*}
\mu_M\ci(R\smp\mu_M)&=(\eps_R\smp M)\ci\gamma_{R,R,M}\ci(R\smp(\eps_R\smp M))\ci(R\smp\gamma_{R,R,M})=\\
&=(\eps_R\smp M)\ci((R\smp\eps_R)\smp M)\ci\gamma_{R,R\smp R,M}\ci(R\smp\gamma_{R,R,M})=\\
&\eqby{SMC7}(\eps_R\smp M)\ci((\eps_R\smp R)\smp M)\ci(\gamma_{R,R,R}\smp M)\ci
\gamma_{R,R\smp R,M}\ci(R\smp\gamma_{R,R,M})=\\
&\eqby{SMC1}(\eps_R\smp M)\ci((\eps_R\smp R)\smp M)\ci\gamma_{R\smp R,R,M}\ci\gamma_{R,R,R\smp M}=\\
&=(\eps_R\smp M)\ci \gamma_{R,R,M}\ci(\eps_R\smp(R\smp M))\ci\gamma_{R,R,R\smp M}=\\
&=\mu_M\ci\mu_{R\smp M}
\end{align*}
and coassociativity of $\delta$,
\begin{align*}
(\delta_M\smp R)\ci\delta_M&=(\gamma_{M,R,R}\smp R)\ci((M\smp\eta_R)\smp R)\ci\gamma_{M,R,R}\ci(M\smp \eta_R)=\\
&=(\gamma_{M,R,R}\smp R)\ci\gamma_{M,R\smp R,R}\ci(M\smp(\eta_R\smp R))\ci(M\smp \eta_R)=\\
&\eqby{SMC6}(\gamma_{M,R,R}\smp R)\ci\gamma_{M,R\smp R,R}\ci(M\smp\gamma_{R,R,R})\ci(M\smp(R\smp\eta_R))\ci
(M\smp\eta_R)=\\
&\eqby{SMC1}\gamma_{M\smp R,R,R}\ci\gamma_{M,R,R\smp R}\ci(M\smp(R\smp\eta_R))\ci(M\smp\eta_R)=\\
&=\gamma_{M\smp R,R,R}\ci((M\smp R)\smp\eta_R)\ci\gamma_{M,R,R}\ci(M\smp\eta_R)=\\
&=\delta_{M\smp R}\ci\delta_M\,.
\end{align*}
As for the left and right unit and counit equations
\begin{align}
\label{SMC10}
\mu_N\ci\eta_{R\smp N}&=R\smp N\\
\label{SMC11}
\mu_N\ci(R\smp\eta_N)&=R\smp N\\
\label{SMC12}
\eps_{M\smp R}\ci\delta_M&=M\smp R\\
\label{SMC13}
(\eps_M\smp R)\ci\delta_M&=M\smp R
\end{align}
notice that inserting $M=R$ in (\ref{SMC4}) we obtain (\ref{SMC11}), inserting $N=R$ in (\ref{SMC4}) we obtain (\ref{SMC13}),
inserting $M=R$ in (\ref{SMC2}) and composing with $\eps_R\smp N$ we obtain (\ref{SMC10}) and inserting $N=R$ in
(\ref{SMC3}) and composing with $M\smp\eta_R$ we obtain (\ref{SMC12}).

It remains to show that $\chi$ is a distributive law in the sense of the equations
\begin{align}
\label{SMC14}
(\mu_M\smp R)\ci\chi_{R\smp M}\ci(R\smp\chi_M)&=\chi_M\ci\mu_{M\smp R}\\
\label{SMC15}
(\chi_M\smp R)\ci\chi_{M\smp R}\ci(R\smp\delta_M)&=\delta_{R\smp M}\ci\chi_M\\
\label{SMC16}
\chi_M\ci\eta_{M\smp R}&=\eta_M\smp R\\
\label{SMC17}
\eps_{R\smp M}\ci\chi_M&=R\smp\eps_M\,.
\end{align}
Equations (\ref{SMC14}) and (\ref{SMC15}) are simple consequences of the pentagon (\ref{SMC1}) while (\ref{SMC16}) and (\ref{SMC17}) follow trivially from (\ref{SMC2}) and (\ref{SMC3}), respectively.
\end{proof}

The monad $T$ and the comonad $Q$ on the right-monoidal category $\M$ will be called the \textit{canonical monad} and the \textit{canonical comonad} of $\M$. For left monoidal categories they are $T=\under \smp R$ and $Q=R\smp\under$.

\begin{lem} \label{lem: smf}
If $\bra F,F_2,F_0\ket$ is a right-monoidal functor $\M\to\N$ then the pair $\bra F,\varphi\ket$, where $\varphi_M:=
F_{R,M}\ci (F_0\smp FM)$, is a monad morphism from the canonical monad $T$ of $\M$ to the canonical monad $T$ on $\N$, i.e.,
\begin{align}
F\mu\ci\varphi T\ci T\varphi&=\varphi\ci\mu F\\
F\eta&=\varphi\ci\eta F\,.
\end{align}
Dually, if $\bra F,F^2,F^0\ket$ is a right-opmonoidal functor $\M\to\N$ then the pair $\bra F,\psi\ket$, where $\psi_M:=
(FM\smp F^0)\ci F^{M,R}$, is a comonad morphism from the canonical comonad $Q$ of $\M$ to the canonical comonad $Q$ of $\N$.
\end{lem}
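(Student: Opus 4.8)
The plan is to prove Lemma~\ref{lem: smf} by checking the two monad-morphism equations directly, and then to obtain the comonad statement by dualizing via the $\M^{\op,\rev}$-construction already built into Definition~\ref{def: skew-mon func}. Recall $\mu_M=(\eps_R\smp M)\ci\gamma_{R,R,M}$, $\eta_M:M\to R\smp M$, and $\varphi_M=F_{R,M}\ci(F_0\smp FM)$, so that $\varphi$ is the composite $TF\Rightarrow FT$ whose components are the evident pasting of $F_0$, $F_2$. Observe first that the second (unit) equation $F\eta=\varphi\ci\eta F$ is exactly axiom~\eqref{smf2} for the right-monoidal functor $F$: indeed $\varphi_X\ci\eta_{FX}=F_{R,X}\ci(F_0\smp FX)\ci\eta_{FX}=F\eta_X$. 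So that half is immediate.

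For the associativity/multiplication equation $F\mu\ci\varphi T\ci T\varphi=\varphi\ci\mu F$, first I would write out both sides as composites of instances of $F_2$, $F_0$, $\gamma$, $\eps$, using naturality of $F_2$ and of $\gamma$ to slide the pieces past each other. The left-hand side, spelled out at an object $M$, is
\[
F\mu_M\ci\varphi_{R\smp M}\ci (R\smp\varphi_M)
\]
and after inserting the definitions of $\mu$ and $\varphi$ it becomes a string involving $\gamma_{FR,FR,FM}$, two copies of $F_0$, two copies of $F_2$, and $F\eps_R$. The key manipulations are: (i) use the hexagon~\eqref{smf1} for $F$ to convert $F\gamma_{R,R,M}\ci F_{R,R\smp M}\ci(FR\smp F_{R,M})$ into $F_{R\smp R,M}\ci(F_{R,R}\smp FM)\ci\gamma_{FR,FR,FM}$; (ii) use the counit axiom~\eqref{smf3} (in the form $F\eps_R\ci F_{R,R}\ci(FR\smp F_0)=\eps_{FR}$) to collapse the $\eps$-part; (iii) use naturality of $F_2$ to commute $\eps_{FR}\smp FM$ past the remaining $F$-structure. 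Both sides should then reduce to the same pasting, namely $F_{R,M}\ci\bigl((\eps_{FR}\ci(F_0\smp F_0))\smp FM\bigr)\ci(\text{something})\ci\gamma_{FR,FR,FM}$; I expect this bookkeeping to be the main obstacle, purely because of the number of structure morphisms in play, not because of any conceptual subtlety. A cleaner route, which I would prefer if it typechecks smoothly, is to invoke Example~\ref{exa: smf}: the canonical monad $T$ on $\M$ is $\eL(R)$ with multiplication induced by $\eL(\eta)$ and $\gamma$, and a right-monoidal functor $F$ composes with $\eL$ to give a morphism of the corresponding monoidal-functor data, from which the monad-morphism equations follow formally; but since $\eL$ is only lax, a little care is needed, so the hands-on verification above is the safe fallback.

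Finally, for the comonad half: a right-opmonoidal functor $\bra F,F^2,F^0\ket:\M\to\N$ is by definition the same as a right-monoidal functor $\M^{\op,\rev}\to\N^{\op,\rev}$ with $F_0:=F^0$ and $F_{X,Y}:=F^{Y,X}$. Under the $(\cdot)^{\op,\rev}$ passage the canonical monad $T=R\smp\under$ of $\M^{\op,\rev}$ is precisely the canonical comonad $Q=\under\smp R$ of $\M$ (composition reversed makes a monad into a comonad, and the order-reversed product turns $R\smp\under$ into $\under\smp R$), and the morphism $\varphi$ constructed above, $\varphi_M=F_{R,M}\ci(F_0\smp FM)$, becomes $\psi_M=(FM\smp F^0)\ci F^{M,R}$. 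Hence the comonad-morphism equations for $\bra F,\psi\ket$ are literally the monad-morphism equations already proved, read in $\M^{\op,\rev}$, and nothing further is needed.
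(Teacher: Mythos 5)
Your proposal is correct and follows essentially the same route as the paper: the unit equation is axiom~\eqref{smf2} verbatim, the multiplication equation reduces via \eqref{smf1}, \eqref{smf3} and naturality of $F_2$, $\gamma$, $\eps$ exactly as you outline, and the comonad half is obtained by passing to $\M^{\op,\rev}$ just as in the paper's proof. The only difference is that you supply more of the bookkeeping that the paper leaves to the reader.
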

\begin{proof}
The statement for the monad morphism can be easily shown using the definition of $\mu$ and the right-monoidal functor axioms 
(\ref{smf1}), (\ref{smf2}) and (\ref{smf3}). The statement for the comonad morphism is then obtained by passing to the
dual right-monoidal category $\M^{\op,\rev}$.
\end{proof}

\begin{rmk}
If we want to formulate a bialgebra-like compatibility condition between $\mu$ and $\delta$ then here is a commutative 
diagram
\begin{equation} \label{compatibility 1}
\begin{CD}
R\smp(R\smp R)@>\mu_R>>R\smp R@>\delta_R>>(R\smp R)\smp R\\
@V{R\smp\delta_R}VV @. @AA{\mu_R\smp R}A\\
R\smp((R\smp R)\smp R)@.@.(R\smp (R\smp R))\smp R\\
@V{\delta_{R,Q^2 R}}VV@.@AA{\mu_{T^2 R,R}}A\\
(R\smp R)\smp((R\smp R)\smp R)@.\longrarr{\sigma_{R,R\smp R,R}}@.(R\smp (R\smp R))\smp(R\smp R)
\end{CD}
\end{equation}
where 
\[
\sigma_{L,M,N}:=((L\smp M)\smp\eta_N)\ci\gamma_{L,M,N}\ci(\eps_L\smp(M\smp N))\ :\ (L\smp R)\smp(M\smp N)\to
(L\smp M)\smp(R\smp N)
\]
and where the 2-argument $\delta$ and $\mu$ are defined by
\begin{align}
\label{delta_K,L}
\delta_{K,L}&:=\gamma_{K,R,L}\ci(K\smp \eta_L)\ :\ K\smp L\to QK\smp L\\
\label{mu_K,L}
\mu_{K,L}&:=(\eps_K\smp L)\ci\gamma_{K,R,L}\ :K\smp TL\to K\smp L\,.
\end{align}
They obey the relations
\begin{alignat}{2}
\delta_{QK,L}\ci\delta_{K,L}&=(\delta_K\smp L)\ci\delta_{K,L}&\qquad (\eps_K\smp L)\ci\delta_{K,L}&=K\smp L\\
\mu_{K,L}\ci\mu_{K,TL}&=\mu_{K,L}\ci(K\smp\mu_L)&\qquad\mu_{K,L}\ci(K\smp\eta_L)&=K\smp L\,.
\end{alignat}
Although diagram (\ref{compatibility 1}) is reminiscent to the compatibility condition between multiplication and comultiplication
of a bialgebroid, 
in order to confirm this interpretation one should investigate in which sense $\sigma$ is a generalized braiding, if at all.
\end{rmk}

\begin{rmk}
The composite $\delta_R\ci\mu_R$ is built from $\gamma$, $\eta$, $\delta$ and identity arrows and has the same source and
target as $\gamma_{R,R,R}$. But there is no sign that they would be equal. Instead, 
\[
\delta_R\ci\mu_R=(\mu_R\smp R)\ci\gamma_{R,R\smp R,R}\ci(R\smp\delta_R),
\]
that is to say $\chi_{R\smp R}$ fits into diagram (\ref{compatibility 1}) as a second row.
So coherence for skew-monoidal categories is expected to fail in its naive form.
\end{rmk}

\begin{rmk}
Using the notations (\ref{delta_K,L}), (\ref{mu_K,L}) there is an identity in any right-monoidal category:
\[
\mu_{R\smp R,R}\ci\delta_{R,R\smp R}\ =\ \gamma_{R,R,R}\,.
\]
More generally, we have
\[
\mu_{QM,N}\ci\delta_{M,TN}\ =\ \gamma_{M,R,N}\,,\qquad M,N\in\M.
\]
This result suggests that we should think of the skew-associator $\gamma$ as the Galois map of the `underlying' quantum groupoid of $\M$ even if there is no such a quantum groupoid in general.
\end{rmk}

\section{The motivating example: bialgebroids} \label{s: bgd}

Let $\Ab_R$ denote the category of right $R$-modules over the ring $R$. This category has no (obvious) monoidal structure.
But every $R$-bialgebroid defines a right-monoidal structure on $\Ab_R$ as we shall see below.

Let $H$ be a right $R$-bialgebroid with $R^\op\ot R$-ring and $R$-coring structure
\begin{align}
t^\sH\ot s^\sH&:R^\op\ot R\ \to\ H\\
\Delta^\sH&:H\ \to\ H\am{R_1} H\,.
\end{align}
The unit element of $H$ is denoted by $1^\sH$ and the counit $H\to R$ by $\eps^\sH$.
Then $H$ carries two left and two right actions of $R$ defined by
\begin{alignat*}{2}
\lambda_1(r)(h)&:=ht^\sH(r)&\qquad\qquad \rho_1(r)(h)&:=t^\sH(r)h\\
\lambda_2(r)(h)&:=s^\sH(r)h&\qquad\qquad \rho_2(r)(h)&:=hs^\sH(r)
\end{alignat*}
for $r\in R$, $h\in H$. The codomain $H\am{R_1} H$ of the comultiplication $\Delta^\sH$ is the tensor square w.r.t. $\rho_2$ and $\lambda_1$. 

For right $R$-modules $M$ and $N$ we introduce 
\begin{equation} \label{smp-bgd}
M\smp N\ :=\ M\am{R_1} (N\am{R_2}H)
\end{equation}
where $L\am{R_i}\under$ refers to tensoring over $R$ with respect to the $\lambda_i$ left action on $H$.
The result $M\smp N$ is considered as a right $R$-module w.r.t. the $\rho_2$ right action on $H$.
Elements of $M\smp N$ are denoted by $[m,n,h]$ instead of $m\ot(n\ot h)$. They therefore obey the relations
\begin{align*}
[m\cdot r,n,h]&=[m,n,ht^\sH(r)]\\
[m,n\cdot r,h]&=[m,n,s^\sH(r)h]\\
[m,n,h]\cdot r&=[m,n,hs^\sH(r)]
\end{align*}
so the following natural transformations are well-defined:
\begin{alignat*}{2}
\eta_M\ :\ M&\to R\smp M,&\quad \eta_M(m)&=[1^\sR,m,1^\sH]\\
\eps_M\ :\ M\smp R&\to M,&\quad \eps([m,r,h])&=m\cdot\eps^\sH(s^\sH(r)h)\\
\gamma_{L,M,N}\ :\ L\smp(M\smp N)&\to(L\smp M)\smp N,&\quad \gamma_{L,M,N}([l,[m,n,g],h])&=[[l,m,h\oneT],n,gh\twoT].
\end{alignat*}
It is easy to verify, using the bialgebroid axioms, that $\bra\Ab_R,\smp, R_R, \gamma,\eta,\eps\ket$ is a right-monoidal category.

One can notice that the skew-associator $\gamma$, which is uniquely determined by $\gamma_{R,R,R}$, is, up to isomorphisms
$R\smp(R\smp R)\cong H\am{R_2}H$ and $(R\smp R)\smp R\cong H\am{R_1}H$, the canonical map or Galois map
\[
H\am{R_2}H\ \to\ H\am{R_1}H,\qquad g\ot h\ \mapsto\ h\oneT\ot g h\twoT
\]
of $H$ as a left $H$-comodule algebra.
Therefore the bialgebroid is a Hopf algebroid (or $\times_R$-Hopf algebra) in the sense of \cite{Sch} precisely when
the skew-associator $\gamma$ is invertible.

\section{$E$-objects} \label{s: E-obj}

Let $E=\End R$ be the endomorphism monoid of the right-monoidal unit $R$. An $E$-object in $\M$ is an object $M$ together
with a morphism $\lambda_M:E\to\M(M,M)$ of monoids.
The category $\E$ of $E$-objects in $\M$ has arrows $M\to N$ the arrows $t\in\M(M,N)$ which satisfy $t\ci\lambda_M(r)=\lambda_N(r)\ci t$ for all $r\in E$.

Since the category of $E$-objects in $\Ab_R$ is the category of bimodules, $_R\Ab_R$, hence monoidal, we would like to see
if this category inherits a skew-monoidal structure from the one given on $\Ab_R$. This is the first step on the path going from
skew-monoidal structures on $\Ab_R$ to bialgebroids.

One can define the category of $E^{\ot m}$-$E^{\ot n}$-bimodules in $\M$ as the category of objects equipped with $m$ left $E$-actions and $n$ right $E$-actions that pairwise commute with each other.  Such objects will be called $(m,n)$-type $E$-objects.

\begin{lem} \label{lem: * of E-objects}
If $K$ and $L$ are left $E$-objects (i.e., they are $(1,0)$-type) then $K\smp L$ is a $(2,1)$-type $E$-object with
\begin{align*}
\lambda_1(r)&=\lambda_K(r)\smp L\\
\lambda_2(r)&=K\smp\lambda_L(r)\\
\rho_1(r)&=(\eps_K\smp L)\ci\gamma_{K,R,L}\ci(K\smp(r\smp L))\ci(K\smp\eta_L).
\end{align*}
More generally, if $K$ is an $E$-object of $(m_1,n_1)$-type and $L$ is of $(m_2,n_2)$-type then $K\smp L$ is an $E$-object of $(m_1+m_2,n_1+1+n_2)$-type.
\end{lem}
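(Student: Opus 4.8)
The plan is to verify that the three prescribed formulas genuinely define pairwise-commuting monoid actions, and that they remain in the category $\E$ (i.e. the relevant naturality squares commute). The first two assignments $\lambda_1$ and $\lambda_2$ are essentially free: since $\under\smp\under$ is a functor, $\lambda_1(r)=\lambda_K(r)\smp L$ and $\lambda_2(r)=K\smp\lambda_L(r)$ are monoid homomorphisms $E\to\M(K\smp L,K\smp L)$ because $\lambda_K$ and $\lambda_L$ are, and functoriality of $\smp$ in each variable separately makes them commute with each other. So all the content is in the right action $\rho_1$.

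First I would check that $\rho_1:E\to\M(K\smp L,K\smp L)$ is a monoid homomorphism. Unravelling, $\rho_1(r)=\mu_{K,L}\ci(K\smp(r\smp L))\ci\delta_{K,L}$ in the notation (\ref{delta_K,L})--(\ref{mu_K,L}), so this is the "conjugation by $r$" built from the two-argument comultiplication and multiplication. That $\rho_1(1_R)=\id$ is exactly the identity $\mu_{K,L}\ci(K\smp\eta_L)=K\smp L$ (the right unit law for the two-argument $\mu$), while multiplicativity $\rho_1(r)\ci\rho_1(s)=\rho_1(s\ci r)$ (note the order: $r$ acts on $K$'s output side) follows by inserting one copy of $\rho_1$ into the middle of another and using, in turn, naturality of $\gamma$ in its middle slot, the identity $\mu_{K,L}\ci(K\smp\eta_L)=K\smp L$ to cancel an $\eta$ against the $\eps$ produced by the inner $\mu_{K,L}$, and the half-associativity relations $\mu_{K,L}\ci\mu_{K,TL}=\mu_{K,L}\ci(K\smp\mu_L)$ and $\delta_{QK,L}\ci\delta_{K,L}=(\delta_K\smp L)\ci\delta_{K,L}$ quoted in the first Remark. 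This is the main calculational obstacle: it is the place where the pentagon (\ref{SMC1}) really enters, via those half-associativity relations, and one must be careful about the variance (conjugation reverses order).

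Next I would check that $\rho_1$ commutes with $\lambda_1$ and with $\lambda_2$. Commutation of $\rho_1(r)$ with $\lambda_1(s)=\lambda_K(s)\smp L$ is immediate: $\lambda_K(s)$ lives entirely in the first tensor factor, $r$ is conjugated in through the $\delta_{K,L}$/$\mu_{K,L}$ machinery which only touches $K$ through its $\eps_K$ and is natural in $K$, so the two slide past each other by naturality of $\gamma$, $\eta$, $\eps$ in the first variable. Commutation of $\rho_1(r)$ with $\lambda_2(s)=K\smp\lambda_L(s)$ follows likewise from naturality in the $L$-slot, since $r\smp L$ and $K\smp\lambda_L(s)$ act in independent spots of $K\smp(R\smp L)$ and the $\gamma$'s are natural. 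Having established the three actions and their pairwise commutativity, $K\smp L$ is a $(2,1)$-type $E$-object.

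Finally, for the general statement: if $K$ carries actions $\lambda^{(i)}_K$ ($i\le m_1$), $\rho^{(j)}_K$ ($j\le n_1$) and $L$ carries $\lambda^{(i)}_L$, $\rho^{(j)}_L$, then on $K\smp L$ one takes the $m_1+m_2$ left actions to be $\lambda^{(i)}_K\smp L$ and $K\smp\lambda^{(i)}_L$; the $n_1$ right actions $\rho^{(j)}_K\smp L$ (which make sense since $\smp$ is a bifunctor and $\rho^{(j)}_K$ commutes with $\lambda_K$, hence the relevant arrows are in $\E$); the $n_2$ right actions obtained from $\rho^{(j)}_L$ by the same conjugation recipe $\mu_{K,L}\ci(K\smp(\rho^{(j)}_L(\under)\smp L'))\ci\delta_{K,L}$ — wait, more simply $K\smp\rho^{(j)}_L$ transported through — and the one extra right action $\rho_1$ of the lemma. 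Checking these $m_1+m_2$ lefts and $n_1+1+n_2$ rights pairwise commute reduces, by the bifunctoriality of $\smp$ and the naturality of the structure maps, to the three commutations already verified plus the trivial fact that actions in disjoint tensor slots commute. I expect no new difficulty beyond the $(1,0)$ case; the only mild care needed is to confirm the single "new" right action $\rho_1$ commutes with the transported $L$-side right actions, which again is naturality of $\gamma$ in the $L$-variable.
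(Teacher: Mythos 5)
Your overall strategy coincides with the paper's: the two left actions and all the pairwise commutations are disposed of by bifunctoriality of $\smp$ and naturality of $\gamma$, $\eta$, $\eps$ (the paper handles these, and the general $(m_i,n_i)$ case, in exactly one sentence each), so the entire content of the lemma is the verification that $\rho_1$ is a right action, with unitality coming from (\ref{SMC4}). That framing, and your observation that $\rho_1$ must come out as an anti-homomorphism $\rho_1(r)\ci\rho_1(s)=\rho_1(s\ci r)$, are correct.

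The gap is in the one substantive step, multiplicativity of $\rho_1$, which you only gesture at, starting from an identification that does not typecheck. You write $\rho_1(r)=\mu_{K,L}\ci(K\smp(r\smp L))\ci\delta_{K,L}$, but $\delta_{K,L}=\gamma_{K,R,L}\ci(K\smp\eta_L)$ lands in $(K\smp R)\smp L$, whereas $K\smp(r\smp L)$ is an endomorphism of $K\smp(R\smp L)$; the correct reading is $\rho_1(r)=\mu_{K,L}\ci\bigl(K\smp[(r\smp L)\ci\eta_L]\bigr)$, and $\delta_{K,L}$ does not occur in it at all. Consequently the coassociativity relation for $\delta_{K,L}$ that you list as a key ingredient is irrelevant, while the ingredient you actually need is absent: after applying naturality of $\mu_{K,\under}$ and the half-associativity $\mu_{K,L}\ci\mu_{K,TL}=\mu_{K,L}\ci(K\smp\mu_L)$ (which the paper's Remark states but does not prove; it follows from (\ref{SMC1}), (\ref{SMC3}) and naturality), one is reduced to showing $\mu_L\ci(r_2\smp TL)\ci\eta_{TL}=r_2\smp L$, which needs (\ref{SMC2}), (\ref{SMC5}) and naturality of $\gamma$ and $\eps$ --- not a cancellation of ``$\eta$ against $\eps$'' via $\mu_{K,L}\ci(K\smp\eta_L)=K\smp L$. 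This route can be completed, but as written it would not assemble into a proof; the paper instead computes $\rho_1(r_1)\ci\rho_1(r_2)=\rho_1(r_2\ci r_1)$ directly from the axioms (\ref{SMC1}), (\ref{SMC2}), (\ref{SMC3}) and (\ref{SMC5}), and you should either carry out that computation or the corrected version of yours.
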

\begin{proof}
$\lambda_1$ and $\lambda_2$ are obviously left actions and commute with each other. $\rho_1$ is natural in $K\in\M$ and $L\in\M$ therefore it commutes with both $\lambda_1$ and $\lambda_2$ and also with any other left or right actions the objects $K$ or $L$ may possess. Therefore the statement follows immediately if we prove that the formula for $\rho_1$ defines a right action. Unitalness $\rho(R)=K\smp L$ follows directly from (\ref{SMC4}). As for multiplicativity 
\begin{align*}
\rho_1(r_1)\ci\rho_1(r_2)&=(\eps_K\smp L)\ci\gamma_{K,R,L}\ci(K\smp(r_1\smp L))\ci(\eps_K\smp\eta_L)\ci \gamma_{K,R,L}\ci(K\smp(r_2\smp L))\ci(K\smp\eta_L)=\\
&=(\eps_K\smp L)\ci((\eps_K\smp R)\smp L)\ci \gamma_{K\smp R,R,L}\ci \gamma_{K,R,R\smp L}\ci(K\smp(r_2\smp (r_1\smp L)))\ci\\
&\qquad (K\smp (R\smp \eta_L))\ci(K\smp\eta_L)=\\
&\eqby{SMC1}(\eps_K\smp L)\ci(\eps_{K\smp R}\smp L)\ci(\gamma_{K,R,R}\smp L)\ci\gamma_{K,R\smp R,L}\ci
(K\smp\gamma_{R,R,L})\ci\\
&\qquad (K\smp(r_2\smp (r_1\smp L)))\ci(K\smp\eta_{R\smp L})\ci(K\smp\eta_L)=\\
&=(\eps_K\smp L)\ci(\eps_{K\smp R}\smp L)\ci(\gamma_{K,R,R}\smp L)\ci((K\smp(r_2\smp R))\smp L)\ci \gamma_{K,R\smp R,L}\ci\\
&\qquad (K\smp((R\smp r_1)\smp L)\ci(K\smp\gamma_{R,R,L})\ci(K\smp\eta_{R\smp L})\ci(K\smp\eta_L)=\\
&\eqby{SMC3}(\eps_K\smp L)\ci((K\smp\eps_R)\smp L)\ci((K\smp (r_2\smp R))\smp L)\ci \gamma_{K,R\smp R,L}\ci\\
&\qquad (K\smp((R\smp r_1)\smp L)\ci(K\smp\gamma_{R,R,L})\ci(K\smp\eta_{R\smp L})\ci(K\smp\eta_L)=\\
&\eqby{SMC2}(\eps_K\smp L)\ci((K\smp\eps_R)\smp L)\ci((K\smp (r_2\smp R))\smp L)\ci \gamma_{K,R\smp R,L}\ci
%\\
\end{align*}
\begin{align*}
&\qquad (K\smp((R\smp r_1)\smp L)\ci(K\smp(\eta_R\smp L))\ci(K\smp\eta_L)=\\
&=(\eps_K\smp L)\ci((K\smp r_2)\smp L)\ci ((K\smp\eps_R)\smp L) \ci \gamma_{K,R\smp R,L}\ci (K\smp(\eta_R\smp L))\ci\\
&\qquad (K\smp(r_1\smp L))\ci(K\smp\eta_L)=\\
&\eqby{SMC5}(\eps_K\smp L)\ci\gamma_{K,R,L}\ci(K\smp((r_2\ci r_1)\smp L))\ci(K\smp\eta_L)=\\
&=\rho_1(r_2\ci r_1).
\end{align*}
This completes the proof.
\end{proof}
If we have $n$ left $E$-objects and we $\smp$ them in any order, so the parenthesizing is arbitrary, then the resulting object
will have $n$ left actions of the obvious $1\smp\dots 1\smp\lambda(r)\smp 1\smp\dots\smp 1$ type and less obvious right actions,
$n-1$ in number, each corresponding to one $\smp$ sign. These actions will be numbered from left to right as shown:
\[
\underset{\lambda_1}{A}\ \overset{\rho_1}{\smp}\ \underset{\lambda_2}{B}\ \overset{\rho_2}{\smp}\ \dots\ 
\overset{\rho_{n-1}}{\smp}\ \underset{\lambda_n}{Z}
\]

The simplest left $E$-object is $R$. Its left action is the identity morphism $E\to\M(R,R)$. By the above Lemma the object $R\smp R$
is equipped with two left actions $\lambda_1$, $\lambda_2$ and one right action $\rho_1$. As such a $(2,1)$-type object 
$R\smp R$ is denoted by $H$. It is to be interpreted as the underlying object of a quantum groupoid, at least for $\M=\Ab_R$. 

In the next Lemma we summarize how the structure maps $\gamma$, $\eta$, $\eps$ and their derivatives $\mu$ and $\delta$  behave with respect to the $\lambda$ and $\rho$ actions.
\begin{lem} \label{lem: lambda-rho}
For $E$-objects $L$, $M$, $N$ and for all $r\in E$
\begin{align}
\label{lambda-rho-1}
\lambda_i(r)\ci\gamma_{L,M,N}&=\gamma_{L,M,N}\ci\lambda_i(r)\qquad i=1,2,3\\
\label{lambda-rho-2}
\lambda_2(r)\ci\eta_N&=\eta_N\ci\lambda_1(r)\\
\lambda_1(r)\ci\eps_L&=\eps_L\ci\lambda_1(r)\\
\lambda_1(r)\ci\mu_N&=\mu_N\ci\lambda_1(r)\\
\label{lambda-rho-5}
\lambda_2(r)\ci\mu_N&=\mu_N\ci\lambda_3(r)\\
\lambda_3(r)\ci\delta_L&=\delta_L\ci\lambda_2(r)\\
\lambda_1(r)\ci\delta_L&=\delta_L\ci\lambda_1(r)\,.
\end{align}
For arbitrary $L$, $M$, $N$ of $\M$ and for all $r\in E$
\begin{align}
\label{lambda-rho-8}
\rho_i(r)\ci\gamma_{L,M,N}&=\gamma_{L,M,N}\ci\rho_i(r)\qquad i=1,2\\
\label{lambda-rho-9}
\rho_1(r)\ci\eta_N&=\lambda_1(r)\ci\eta_N\\
\eps_L\ci \rho_1(r)&=\eps_L\ci\lambda_2(r)\\
\label{lambda-rho-11}
\rho_1(r)\ci\mu_N&=\mu_N\ci\rho_2(r)\\
\rho_1(r)\ci\delta_L&=\delta_L\ci\rho_1(r)\\        
\label{lambda-rho-13}
\mu_N\ci\rho_1(r)&=\mu_N\ci\lambda_2(r)\\
\rho_2(r)\ci\delta_L&=\lambda_2(r)\ci\delta_L\,.
\end{align}
\end{lem}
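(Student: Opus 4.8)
The plan is to verify the identities one at a time, but they all share a single shape: each says that a component of a structure transformation --- $\gamma$, $\eta$, $\eps$, $\mu$ or $\delta$ --- intertwines the $E$-actions carried by its source and its target $\smp$-word, and every such statement reduces to \emph{naturality} of the transformation in question, supplemented, whenever a right action $\rho_i$ occurs, by the skew-monoidal axioms (\ref{SMC1})--(\ref{SMC5}) and the consequences already extracted in Lemma \ref{lem: T Q chi} (above all (\ref{SMC6}), (\ref{SMC7}) and the unit/counit laws (\ref{SMC10})--(\ref{SMC13})). I would dispose of the $\lambda$-relations first and then the $\rho$-relations, noting that $\rho_1$ on a product $K\smp L$ is defined for \emph{all} $K,L\in\M$ by the formula of Lemma \ref{lem: * of E-objects}, so the $\rho$-relations carry no $E$-object hypothesis.

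For the $\lambda$-relations: on a $\smp$-word the action $\lambda_i(r)$ is, by construction, the functor $\smp$ evaluated at $r$ (at $\lambda_N(r)$) in the $i$-th tensor slot, identities in the others. Hence (\ref{lambda-rho-1}) for $i=1,2,3$ is exactly naturality of $\gamma$ in its first, second and third variable; $\lambda_2(r)\ci\eta_N=\eta_N\ci\lambda_1(r)$, $\lambda_1(r)\ci\eps_L=\eps_L\ci\lambda_1(r)$ and $\lambda_1(r)\ci\delta_L=\delta_L\ci\lambda_1(r)$ are naturality of $\eta$, of $\eps$, and of the comultiplication $\delta:Q\to Q^2$; and (\ref{lambda-rho-5}) is naturality of the multiplication $\mu:T^2\to T$, with $\lambda_2$ on $TN$ corresponding to $\lambda_3$ on $T^2N$. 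The only two-step cases are the two in which $\lambda$ hits the \emph{fresh} copy of $R$ that the transformation itself produces, namely $\lambda_1(r)\ci\mu_N=\mu_N\ci\lambda_1(r)$ and $\lambda_3(r)\ci\delta_L=\delta_L\ci\lambda_2(r)$: there one expands $\mu_N=(\eps_R\smp N)\ci\gamma_{R,R,N}$, resp.\ $\delta_L=\gamma_{L,R,R}\ci(L\smp\eta_R)$, commutes $r$ past $\gamma$ by naturality of $\gamma$, and slides it through $\eps_R$, resp.\ $\eta_R$, by naturality of $\eps$, resp.\ $\eta$.

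For the $\rho$-relations I would single out four primitive ones --- (\ref{lambda-rho-8}), (\ref{lambda-rho-9}), $\eps_L\ci\rho_1(r)=\eps_L\ci\lambda_2(r)$ and (\ref{lambda-rho-13}) --- and deduce the remaining three, (\ref{lambda-rho-11}), $\rho_1(r)\ci\delta_L=\delta_L\ci\rho_1(r)$ and $\rho_2(r)\ci\delta_L=\lambda_2(r)\ci\delta_L$, from them by naturality of $\gamma$, $\eta$ and of $\rho_1$ in its two arguments (since $\mu$ and $\delta$ are built from $\gamma,\eta,\eps$, these reductions are mechanical). Each primitive is proved by substituting the defining composite $\rho_1^{K\smp L}(r)=(\eps_K\smp L)\ci\gamma_{K,R,L}\ci(K\smp(r\smp L))\ci(K\smp\eta_L)$, using naturality of $\gamma$ to pull $r$ out to the form $((K\smp r)\smp L)$ --- which exposes the two-variable versions $\delta_{K,L}=\gamma_{K,R,L}\ci(K\smp\eta_L)$ and $\mu_{K,L}=(\eps_K\smp L)\ci\gamma_{K,R,L}$ as subexpressions --- and then collapsing the result with the pentagon (\ref{SMC1}) and the unit/counit laws, in the style of the proof of Lemma \ref{lem: * of E-objects}: (\ref{lambda-rho-9}) uses (\ref{SMC2}) and (\ref{SMC5}); $\eps_L\ci\rho_1(r)=\eps_L\ci\lambda_2(r)$ uses naturality of $\eps$ and the counit law (\ref{SMC12}); (\ref{lambda-rho-13}) uses (\ref{SMC1}) and the monad law (\ref{SMC10}). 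The laborious step, and the one I expect to be the real obstacle, is (\ref{lambda-rho-8}): there $\rho_1$ sits on the \emph{outer} $\smp$-sign of $L\smp(M\smp N)$ but on the \emph{inner} $\smp$-sign of $(L\smp M)\smp N$, and symmetrically for $\rho_2$, so both composites must be written out in full via the $\rho_i$-formula and then matched by a single application of the pentagon (\ref{SMC1}) after the $\eta$'s and $\eps$'s have been discharged through (\ref{SMC2})--(\ref{SMC5}). This is essentially bookkeeping of which $\smp$-sign carries which $\rho_i$ on either side of $\gamma$, with no conceptual content beyond it; once laid out, every equation in the lemma is forced by naturality together with (\ref{SMC1})--(\ref{SMC5}).
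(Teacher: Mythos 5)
Your proposal is correct and follows the same route as the paper, whose proof merely observes that the relations involving only $\lambda$'s are naturalities of the structure maps and leaves the $\rho$-computations to the reader. Your filling-in of those computations --- expanding $\rho_1$ via its defining formula, pulling $r$ out by naturality of $\gamma$, and collapsing with the pentagon (\ref{SMC1}) and the triangles (\ref{SMC2})--(\ref{SMC5}) --- checks out, as does your reduction of the remaining $\rho$-relations to the primitive ones via (\ref{lambda-rho-8}) and naturality of $\rho_1$ in its two arguments, and your (correct) caveat that $\lambda_1\ci\mu=\mu\ci\lambda_1$ and $\lambda_3\ci\delta=\delta\ci\lambda_2$ need a one-line expansion rather than bare naturality of $\mu$ or $\delta$.
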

\begin{proof}
Relations involving $\lambda$-s only are just naturalities of the structure maps. Those involving $\rho$-s require some computations which, however, are left to the reader.
\end{proof}

Among the various multiple $E$-objects there are distinguished ones that behave nicely under the $\smp$-product. For each $n>0$
let $\M^{(n)}$ denote the category of $(n,n-1)$-type of $E$-objects in $\M$. Then $\M^{(m)}\smp\M^{(n)}\subset\M^{(m+n)}$ by
Lemma \ref{lem: * of E-objects}. Clearly, $\M^{(1)}=\E$ and $R\in\M^{(1)}$, $H\in\M^{(2)}$. The coproduct 
$\M^{(\bullet)}=\bigsqcup_{n>0}\M^{(n)}$ is then closed under $\smp$ but has no unit object. 

Now assume that the category $\M$ has limits and colimits. For two left $E$-objects $L$ and $M$ we can make new $E$-objects from the $(2,1)$-type object $L\smp M$ either by forming the $\lambda_1$-$\rho_1$ center or by forming the $\rho_1$-$\lambda_2$ quotient:
\begin{align}
\label{center}
\int_{\lambda_1 \rho_1}L\smp M&\equalizer{z_{L,M}}L\smp M\pair{\lambda_1}{\rho_1}\prod_{r\in E}L\smp M\\
\label{quotient}
\coprod_{r\in E} L\smp M\pair{\rho_1}{\lambda_2}L\smp M&\coequalizer{q_{L,M}}\int^{\rho_1 \lambda_2}L\smp M
\end{align}
Then the $\lambda_2$ action on $L\smp M$ inherits to $\int_{\lambda_1\rho_1}L\smp M$ a left $E$-object structure and $\lambda_1$ inherits one to $\int^{\rho_1\lambda_2}L\smp M$.  In this way, the above end and coend define functors $\E\times\,\E\to\,\E$. The identity arrow on $L\smp M$ restricts-corestricts to a natural transformation 
\begin{equation}
\label{theta}
\theta_{L,M}:=q_{L.M}\ci z_{L,M}\ :\quad\int_{\lambda_1 \rho_1}L\smp M\ \to\ \int^{\rho_1\lambda_2}L\smp M\ .
\end{equation}
Indeed, for $r\in E$
\[
\theta_{L,M}\ci\lambda_2(r)=q_{L,M}\ci\lambda_2(r)\ci z_{L,M}=q_{L,M}\ci\rho_1(r)\ci z_{L,M}=
q_{L,M}\ci\lambda_1(r)\ci z_{L,M}=\lambda_1(r)\ci\theta_{L,M}
\]
shows that $\theta_{L,M}$ belongs to $\E$. Its naturality follows from that $z$ and $q$ are natural.

\begin{pro} \label{pro: gamma+}
Let $\bra\M,\smp,R,\gamma,\eta,\eps\ket$ be a right-monoidal category in which the category $\M$ has colimits and $L\smp\under:\M\to\M$ preserves finite colimits for each $L\in\M$. Choosing a coequalizer (\ref{quotient}) for each pair of
$E$-objects $\bra L,M\ket$ and making the quotient 
\[
L\smpq M :=\int^{\rho_1\lambda_2} L\smp M
\]
an $E$-object by means of $\lambda_1$ there is a unique right-monoidal structure
$\bra\,\E,\smpq,R,\gamma^q,\eta^q,\eps^q\ket$ on the category of $E$-objects such that the forgetful functor $\phi:\,\E\to\M$ together with $q_{L,M}:L\smp M\to L\smpq M$ and the identity arrow $1_R$ becomes a right-monoidal functor
$\E\to\M$.
\end{pro}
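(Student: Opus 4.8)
The plan is to construct the quotient structure directly and then invoke the universal property of the coequalizers to get uniqueness. First I would use the coequalizer $q_{L,M}:L\smp M\to L\smpq M$ to define the components. Since $L\smp\under$ preserves finite colimits and $\under\smp M$ is a functor, the coend $L\smpq M=\int^{\rho_1\lambda_2}L\smp M$ is functorial in $\bra L,M\ket\in\E\x\E$, and the unit $R$ remains a left $E$-object. The natural transformation $q$ is epi in each component. To define $\gamma^q_{L,M,N}$ one notices that $L\smpq(M\smpq N)$ is a double quotient of $L\smp(M\smp N)$; composing $q$'s appropriately, one gets a regular epi $p:L\smp(M\smp N)\twoheadrightarrow L\smpq(M\smpq N)$ (here the colimit-preservation of $L\smp\under$ is used to identify $L\smpq(M\smpq N)$ as a quotient of $L\smp(M\smpq N)$, which in turn is a quotient of $L\smp(M\smp N)$), and similarly a regular epi $p':L\smp(M\smp N)\twoheadrightarrow(L\smpq M)\smpq N$. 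Then I want $\gamma^q$ to be the unique arrow with $\gamma^q\ci p=p'\ci\gamma_{L,M,N}$; this requires checking that $p'\ci\gamma_{L,M,N}$ coequalizes the parallel pairs defining $p$, which is where Lemma \ref{lem: lambda-rho} (relations \eqref{lambda-rho-8}, and the interaction of $\gamma$ with the $\rho_i$) comes in. Likewise $\eta^q_M:=q_{R,M}\ci\eta_M$ (well-defined since $\eta_M$ lands in a left $E$-object and we quotient only the extra $\rho_1$ on $R\smp M$; here one uses \eqref{lambda-rho-9}, i.e. $\rho_1(r)\ci\eta_N=\lambda_1(r)\ci\eta_N$, to see that $q_{R,M}\ci\eta_M$ does not depend on the quotiented action), and $\eps^q_M$ is induced from $\eps_{\phi M}$ on $M\smpq R$ by the same argument using $\eps_L\ci\rho_1(r)=\eps_L\ci\lambda_2(r)$.

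The second step is to verify the five skew-monoidal axioms \eqref{SMC1}--\eqref{SMC5} for $\bra\smpq,\gamma^q,\eta^q,\eps^q\ket$. Each of these is an equation between arrows out of some iterated $\smpq$-product of $E$-objects, and each such iterated product is a regular-epi quotient of the corresponding iterated $\smp$-product (repeatedly using that $L\smp\under$ preserves the relevant finite colimits so the quotients compose). Since the $q$'s are jointly epi, it suffices to precompose with the appropriate canonical epi and reduce each axiom to the corresponding axiom \eqref{SMC1}--\eqref{SMC5} for the original $\smp$ structure, which holds by hypothesis. This is the routine but bookkeeping-heavy part: one must be careful that the various ways of presenting $K\smpq(L\smpq(M\smpq N))$ etc.\ as quotients of the fully-parenthesized $\smp$-expression agree, which again is guaranteed by finite-colimit preservation of each $L\smp\under$ together with naturality.

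The third step is the universal property: I claim $\bra\phi,q,1_R\ket$ is a right-monoidal functor $\E\to\M$, and that this structure is the unique right-monoidal structure on $\smpq$ making it so. That $\bra\phi,q,1_R\ket$ satisfies \eqref{smf1}--\eqref{smf3} is essentially the defining property of $\gamma^q,\eta^q,\eps^q$: \eqref{smf1} is exactly the equation $\gamma^q\ci p=p'\ci\gamma$ rewritten, and \eqref{smf2}, \eqref{smf3} are the defining equations of $\eta^q$ and $\eps^q$. For uniqueness, suppose $\bra\smpq,\gamma',\eta',\eps'\ket$ is another right-monoidal structure with $\bra\phi,q,1_R\ket$ right-monoidal. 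Then \eqref{smf2} forces $\eta'_M$ to be the unique factorization of $\eta_M$ through $q_{R,M}$, hence $\eta'=\eta^q$; similarly \eqref{smf3} forces $\eps'=\eps^q$; and \eqref{smf1}, together with the fact that $q$-components are (jointly) epi and that the canonical epi $p$ onto $L\smpq(M\smpq N)$ is built from $q$'s and from $L\smp(\under)$ applied to $q$'s, forces $\gamma'=\gamma^q$.

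The main obstacle I expect is the second and third steps done carefully: namely, showing that the "obvious'' regular epi presentations of multiply-iterated $\smpq$-products as quotients of $\smp$-products are well-defined and mutually compatible. Concretely, $L\smpq(M\smpq N)$ is $\int^{\rho_1\lambda_2}\bigl(L\smp(M\smpq N)\bigr)$, and one must know that applying $L\smp\under$ to the coequalizer $q_{M,N}$ yields a coequalizer (this is the hypothesis that $L\smp\under$ preserves finite colimits, in particular coequalizers), and then that the composite of two coequalizers is again a coequalizer of a suitable parallel pair of coproducts — so that $p$ really is a (split or regular) epi with an explicit kernel pair, against which one can check the coequalizing condition. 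Once this "transitivity of quotients'' lemma is in place, every axiom and the uniqueness statement follow by precomposing with epis and citing the corresponding fact for $\smp$; no genuinely new idea is needed beyond Lemmas \ref{lem: * of E-objects} and \ref{lem: lambda-rho}.
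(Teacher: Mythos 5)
Your proposal is correct and follows essentially the same route as the paper: the paper defines $\gamma^q$ by factoring $\xi:=q_{L\smpq M,N}\ci(q_{L,M}\smp N)\ci\gamma_{L,M,N}$ first through $L\smp q_{M,N}$ (a coequalizer because $L\smp\under$ preserves finite colimits) and then through $q_{L,M\smpq N}$, using exactly the relations \eqref{lambda-rho-1} and \eqref{lambda-rho-8} you cite, with $\eta^q$, $\eps^q$, the axiom checks and uniqueness handled as you describe. The only cosmetic difference is that your worry about the composite $p$ being a regular epi with an explicit kernel pair is unnecessary: the paper simply performs the two factorizations in succession, checking the relevant coequalizing condition at each stage (the intermediate factorization $\xi'$ still satisfies $\xi'\ci\rho_1=\xi'\ci\lambda_2$ because $L\smp q_{M,N}$ is epi and commutes with those actions), so only plain epi-ness of the $q$'s is ever used.
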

\begin{proof}
For $\bra\phi,q,1_R\ket$ to be a right-monoidal functor the $\gamma^q$, $\eta^q$ and $\eps^q$ must obey to commutativity of
the diagrams
\begin{gather} \label{q-hexa}
\begin{CD}
L\smp(M\smp N)@>L\smp q_{M,N}>>L\smp(M\smpq N)@>q_{L,M\smpq N}>>L\smpq(M\smpq N)\\
@V{\gamma_{L,M,N}}VV @. @VV{\gamma^q_{L,M,N}}V\\
(L\smp M)\smp N@>q_{L,M}\smp N>>(L\smpq M)\smp N@>q_{L\smpq M,N}>>(L\smpq M)\smpq N
\end{CD}
\\  \label{q-squares}
\begin{CD}
M@>\eta_M>>R\smp M\\
@| @VV{q_{R,M}}V\\
M@>\eta^q_M>>R\smpq M
\end{CD}
\qquad\qquad
\begin{CD}
M\smp R@>\eps_M>> M\\
@V{q_{M,R}}VV @|\\
M\smpq R@>\eps^q_M>>M
\end{CD}
\end{gather}
The existence and uniqueness of $\gamma^q$ follow from that the composite $\xi:=
q_{L\smpq M, N}\ci(q_{L,M}\smp N)\ci\gamma_{L,M,N}$ satisfies both $\xi\ci\rho_1=\xi\ci\lambda_2$ and
$\xi\ci\rho_2=\xi\ci\lambda_3$ as a consequence of (\ref{lambda-rho-1}), (\ref{lambda-rho-8}). 
By the latter there is a unique factorization 
$\xi=\xi'\ci(L\smp q_{M,N})$ in which $\xi'\ci\rho_1=\xi'\ci\lambda_2$. Then $\gamma^q$ is obtained as the unique factorization
$\xi'=\gamma^q_{L,M,N}\ci q_{L,M\smpq N}$.
$\eps^q$ is obtained in a similar way while $\eta^q$ is readily defined by the diagram as it stands.
  
The verification of the right-monoidal category axioms is now a routine computation.
\end{proof}

The dual of Proposition \ref{pro: gamma+} is the following
\begin{pro} \label{pro: gammax}
Let $\bra\M,\smp,R,\gamma,\eta,\eps\ket$ be a right-monoidal category in which the category $\M$ has limits and 
$\under\smp M:\M\to\M$ preserves finite limits for each $M\in\M$. Choosing an equalizer (\ref{center}) for each pair of
$E$-objects $\bra L,M\ket$ and making the center 
\[
L\smpz M :=\int_{\lambda_1\rho_1} L\smp M
\]
an $E$-object by means of $\lambda_2$ there is a unique right-monoidal structure
$\bra\,\E,\smpz,R,\gamma^z,\eta^z,\eps^z\ket$ on the category of $E$-objects such that
$\bra\phi,z,1_R\ket$ is a right-opmonoidal functor, i.e.,
\begin{gather}
\begin{CD}
L\smpz(M\smpz N)@>z_{L,M\smpz N}>>L\smp(M\smpz N)@>L\smp z_{M,N}>>L\smp(M\smp N)\\
@V{\gamma^z_{L,M,N}}VV @. @VV{\gamma_{L,M,N}}V\\
(L\smpz M)\smpz N@>z_{L\smpz M,N}>>(L\smpz M)\smp N@>z_{L,M}\smp N>>(L\smp M)\smp N
\end{CD}
\\ 
\begin{CD}
M@>\eta^z_M>>R\smpz M\\
@| @VV{z_{R,M}}V\\
M@>\eta_M>>R\smp M
\end{CD}
\qquad\qquad
\begin{CD}
M\smpz R@>\eps^z_M>> M\\
@V{z_{M,R}}VV @|\\
M\smp R@>\eps_M>>M
\end{CD}
\end{gather}
are commutative for each $L,M,N\in\,\E$.
\end{pro}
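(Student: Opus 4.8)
The intended route is dualization: apply Proposition~\ref{pro: gamma+} to the right-monoidal category $\M^{\op,\rev}$ and translate the outcome back to $\M$. First I would check that the hypotheses carry over. The underlying category of $\M^{\op,\rev}$ is $\M^{\op}$, which has colimits because $\M$ has limits; and for any object $L$ the functor $L\smp\under$ of $\M^{\op,\rev}$ is just $\under\smp L:\M\to\M$ with all arrows reversed, so it preserves finite colimits exactly because $\under\smp L$ preserves finite limits in $\M$, which is our assumption. Moreover $\End_{\M^{\op,\rev}}R=E^{\op}$, and an $E^{\op}$-object in $\M^{\op}$ is the same datum as an $E$-object in $\M$, so the category $\E$ for $\M^{\op,\rev}$ is the category $\E$ for $\M$ with its arrows reversed.

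The one place that needs genuine care is matching the coequalizer prescribed by Proposition~\ref{pro: gamma+} for $\M^{\op,\rev}$ with the equalizer~(\ref{center}) for $\M$. For left $E$-objects $L,M$ of $\M$, the object $L\smp M$ of $\M$ is the value at the pair $(M,L)$ of the monoidal product of $\M^{\op,\rev}$; applying Lemma~\ref{lem: * of E-objects} there and rewriting each ingredient in $\M$ --- where $\gamma^{\op,\rev}_{M,R,L}$ becomes $\gamma_{L,R,M}$, the counit of $\M^{\op,\rev}$ becomes $\eta$, and composition is reversed --- one finds that the pair $\rho_1,\lambda_2$ attached to $M\smp^{\op,\rev}L$ becomes, in $\M$, the pair $\lambda_1,\rho_1$ attached to $L\smp M$. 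The only non-formal step is that $\rho_1$ of $\M^{\op,\rev}$ matches $\rho_1$ of $\M$ only after using naturality of $\gamma$ in its middle argument to commute $L\smp(r\smp M)$ past $\gamma_{L,R,M}$. Likewise, making the $\M^{\op,\rev}$-quotient an $E$-object by $\lambda_1$ corresponds to making the center an $E$-object by $\lambda_2$. Hence the coequalizer~(\ref{quotient}) for $\M^{\op,\rev}$ is precisely the equalizer~(\ref{center}) for $\M$, and the $\smpq$-product of $\M^{\op,\rev}$, with its arguments interchanged, is exactly $\smpz$.

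With these identifications in hand, Proposition~\ref{pro: gamma+} applied to $\M^{\op,\rev}$ produces a unique right-monoidal structure making $\bra\phi,q,1_R\ket$ a right-monoidal functor $\E\to\M^{\op,\rev}$; forming the $(\op,\rev)$ of everything in sight turns this into the right-monoidal structure $\bra\,\E,\smpz,R,\gamma^z,\eta^z,\eps^z\ket$ asserted in the statement, turns $q$ into the equalizer inclusion $z$, and turns the right-monoidal functor $\bra\phi,q,1_R\ket$ into a right-opmonoidal functor $\bra\phi,z,1_R\ket:\E\to\M$ --- this last passage being nothing but the definition of skew-opmonoidal functor in Definition~\ref{def: skew-mon func}. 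The three coherence diagrams displayed in the statement are the $(\op,\rev)$-images of the hexagon~(\ref{q-hexa}) and the two squares~(\ref{q-squares}), so they commute automatically, and the uniqueness clause transports along the same dictionary. The main obstacle is therefore purely the bookkeeping of the middle paragraph: keeping the handedness conventions of $\M^{\op,\rev}$ straight and noticing the $\gamma$-naturality move that identifies the two versions of $\rho_1$. For a reader who prefers not to unwind the duality, I would note that the statement can equally be proved by copying the proof of Proposition~\ref{pro: gamma+} verbatim, defining $\gamma^z$, $\eta^z$, $\eps^z$ through the universal property of the equalizers~(\ref{center}) in place of the coequalizers~(\ref{quotient}).
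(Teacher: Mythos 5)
Your proposal is correct and takes exactly the route the paper intends: the paper offers no proof beyond declaring Proposition \ref{pro: gammax} to be the dual of Proposition \ref{pro: gamma+}, and your dualization through $\M^{\op,\rev}$ (including the verification that $\rho_1^{\op,\rev},\lambda_2^{\op,\rev}$ become $\rho_1,\lambda_1$ via naturality of $\gamma$) is the bookkeeping the paper leaves implicit.
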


Applying Lemma \ref{lem: smf} to the skew-(op)monoidal functor of Proposition \ref{pro: gamma+} and Proposition 
\ref{pro: gammax}, respectively, we obtain the following.
\begin{cor} \label{cor: (co)monad morphisms}
Let $\bra\M,\smp,R,\gamma,\eta,\eps\ket$ be a right-monoidal category with canonical monad $T$ and canonical comonad $Q$.
\begin{enumerate}
\item If $\M$ has colimits and for all $L\in\M$ the endofunctor $L\smp\under$ preserves finite colimits then
\begin{enumerate}
\item $\E$ has a right-monoidal structure with canonical monad $T_q=\int^{\rho_1\lambda_2}R\smp\under$
\item and $\kappa_M:=q_{R,M}$ defines a monad morphism $\bra\phi,\kappa\ket$ from $T_q$ to $T$.
\end{enumerate}
\item If $\M$ has limits and for all $M\in\M$ the endofunctor $\under\smp M$ preserves finite limits then
\begin{enumerate}
\item $\E$ has a right-monoidal structure with canonical comonad $Q^z=\int_{\lambda_1\rho_1}\under\smp R$
\item and $\zeta_L:=z_{L,R}$ defines a comonad morphism $\bra\phi,\zeta\ket$ from $Q^z$ to $Q$.
\end{enumerate}
\end{enumerate}
\end{cor}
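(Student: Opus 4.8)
The plan is to reduce everything to the already-established results about skew-(op)monoidal functors, so that almost nothing needs to be recomputed. Recall that Corollary \ref{cor: (co)monad morphisms} has two mutually dual halves; it suffices to treat part (i) and then obtain part (ii) by passing to the dual right-monoidal category $\M^{\op,\rev}$, whose canonical comonad is the canonical monad of $\M$, whose finite-limit hypothesis becomes the finite-colimit hypothesis, and whose center construction becomes the quotient construction. So I would write out (i) and end with one sentence invoking this duality for (ii).

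For part (i), I would proceed as follows. First, under the standing hypotheses (colimits in $\M$, and $L\smp\under$ preserving finite colimits for each $L$), Proposition \ref{pro: gamma+} gives a right-monoidal structure $\bra\E,\smpq,R,\gamma^q,\eta^q,\eps^q\ket$ on $\E$ together with the right-monoidal functor $\bra\phi,q,1_R\ket:\E\to\M$. By definition the canonical monad of this new structure is $T_q=R\smpq\under=\int^{\rho_1\lambda_2}R\smp\under$, which is (i)(a). For (i)(b) I invoke Lemma \ref{lem: smf}: a right-monoidal functor $\bra F,F_2,F_0\ket:\M\to\N$ yields a monad morphism $\bra F,\varphi\ket$ between the canonical monads with $\varphi_M:=F_{R,M}\ci(F_0\smp FM)$. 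Applying this to $F=\phi$, $F_2=q$, $F_0=1_R$, the component is $\varphi_M=q_{R,M}\ci(1_R\smp\phi M)=q_{R,M}=\kappa_M$, since $1_R\smp\phi M$ is the identity on $R\smp M$ (the unit object is genuinely $R$, not merely isomorphic to it, so $1_R\smp\under$ is literally the identity natural transformation). Hence $\bra\phi,\kappa\ket$ is a monad morphism from $T_q$ to $T$, which is (i)(b).

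The only point requiring any care is the bookkeeping of what Lemma \ref{lem: smf} actually delivers versus what Corollary \ref{cor: (co)monad morphisms} claims: one must check that the canonical monad of the $\smpq$-structure, as produced abstractly by Lemma \ref{lem: T Q chi}, coincides on the nose with the functor $\int^{\rho_1\lambda_2}R\smp\under$ named in the statement, and that its multiplication and unit $\mu^q,\eta^q$ are the ones induced by $\gamma^q,\eta^q,\eps^q$ via the formulas of Lemma \ref{lem: T Q chi}. This is immediate from the construction in Proposition \ref{pro: gamma+}, where $\gamma^q,\eta^q,\eps^q$ are defined precisely so that $q$ intertwines them with $\gamma,\eta,\eps$; no new computation is needed. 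I do not anticipate a genuine obstacle here — the statement is essentially a corollary in the literal sense, and the proof is two applications of previously proved lemmas plus the $\M^{\op,\rev}$ duality; the main thing to get right is simply matching notation so that $\kappa_M=q_{R,M}$ and $\zeta_L=z_{L,R}$ are correctly identified as the components $\varphi_M$ and $\psi_L$ of the respective (co)monad morphisms.
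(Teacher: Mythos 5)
Your proposal is correct and follows exactly the paper's route: the paper proves this corollary by "Applying Lemma \ref{lem: smf} to the skew-(op)monoidal functor of Proposition \ref{pro: gamma+} and Proposition \ref{pro: gammax}", which is precisely your two-step argument (with the second half handled by the $\M^{\op,\rev}$ duality already built into Lemma \ref{lem: smf} and Proposition \ref{pro: gammax}). The identification $\varphi_M=q_{R,M}\ci(1_R\smp\phi M)=\kappa_M$ is the right bookkeeping and matches the paper.
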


As we shall see in the next section some results of this Corollary hold under weaker hypotheses.

\section{Comodules and modules}

If right-monoidal categories are to be interpreted as quantum groupoids then it must have associated categories of modules and
comodules. The Eilenberg-Moore categories of the canonical monad $T$ and comonad $Q$ are the obvious candidates, albeit
apparently without monoidal structures.

Let $\M^Q$ denote the Eilenberg-Moore category of $Q$-comodules, also called $Q$-coalgebras, for the comonad $Q=\bra\under\smp R,\delta,\eps\ket$. Its objects are pairs $\bra M,\Delta_M\ket$ where $M$ is an object of $M$ and 
$\Delta_M:M\to M\smp R$ satisfies 
\begin{align}
\label{coact1}
(\Delta_M\smp R)\ci\Delta_M&=\delta_M\ci\Delta_M\\
\label{coact2}
\eps_M\ci\Delta_M&=M\,.
\end{align}
The arrows $M\to N$ in $\M^Q$ are defined to be the arrows $t\in\M(M,N)$ such that 
\begin{equation} \label{comodmap}
\Delta_N\ci t=(t\smp R)\ci\Delta_M\,.
\end{equation}

Dually, in the category $\M_T$ of $T$-modules the objects $\nabla_M:R\smp M\to M$ are defined by the equations
\begin{align}
\label{act1}
\nabla_M\ci(R\smp\nabla_M)&=\nabla_M\ci\mu_M\\
\label{act2}
\nabla_M\ci\eta_M&=M\,.
\end{align}
and its arrows $t:M\to N$ by
\begin{equation} \label{modmap}
t\ci\nabla_M=\nabla_N\ci(R\smp t)\,.
\end{equation}

Entwined modules of a skew-monoidal category can be defined as the category of triples $\bra M,\nabla,\Delta\ket$ such that 
$\bra M,\nabla\ket$ is a $T$-module and $\bra M,\Delta\ket$ is a $Q$-comodule which satisfy the compatibility condition
\[
\begin{CD}
TM@>\nabla>>M@>\Delta>>QM\\
@V{T\Delta}VV @. @AA{Q\nabla}A\\
TQM@.\rarr{\chi_M}@.QTM
\end{CD}
\]
The arrows $\bra M,\Delta,\nabla\ket\to\bra M',\Delta',\nabla'\ket$ are the arrows $t\in\M(M,M')$ which are both $T$-module 
and $Q$-comodule morphisms. 
The basic example of an entwined module is the object $R\smp R$ with action $\mu_R$ and coaction $\delta_R$.

\begin{lem} \label{lem: forget to E-mod}
If $L$ is a $Q$-comodule and $N$ is a $T$-module 
then both $L$ and $N$ are left $E$-objects via
\begin{align}
\lambda_L(r)&=\eps_L\ci (L\smp r)\ci\Delta_L\\
\label{T-induced lambda}
\lambda_N(r)&=\nabla_N\ci(r\smp N)\ci\eta_N\,,
\end{align}
respectively. With respect to these actions every arrow in $\M^Q$ and every arrow in $\M_T$ are morphisms of left $E$-objects.
This defines the faithful functors
\[
\F_z:\M^Q\to\ \E\,,\qquad\F_q:\M_T\to\ \E
\]
\end{lem}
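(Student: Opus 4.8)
The plan is to show that the formulas for $\lambda_L$ and $\lambda_N$ genuinely define monoid morphisms $E\to\M(L,L)$ and $E\to\M(N,N)$, that $Q$-comodule maps and $T$-module maps respect these actions, and that the resulting assignments are functorial and faithful; faithfulness and functoriality are then immediate since $\F_z$ and $\F_q$ act as the identity on underlying arrows.

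\emph{First I would} verify that $\lambda_N$ is a monoid morphism. Unitality $\lambda_N(R)=N$ is exactly axiom (\ref{act2}), i.e.\ $\nabla_N\ci\eta_N=N$. For multiplicativity one computes $\lambda_N(r)\ci\lambda_N(s)=\nabla_N\ci(r\smp N)\ci\eta_N\ci\nabla_N\ci(s\smp N)\ci\eta_N$; using naturality of $\eta$ to slide $\eta_N\ci\nabla_N$ past, then the action axiom (\ref{act1}) $\nabla_N\ci(R\smp\nabla_N)=\nabla_N\ci\mu_N$ together with the unit equation (\ref{SMC11}) $\mu_N\ci(R\smp\eta_N)=R\smp N$, and finally naturality of $\eta$ and of $\smp$ in its first slot, one collapses this to $\nabla_N\ci((s\ci r)\smp N)\ci\eta_N=\lambda_N(s\ci r)$. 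The dual computation for $\lambda_L$ uses (\ref{coact2}), (\ref{coact1}), the counit equation (\ref{SMC12}) $\eps_{M\smp R}\ci\delta_M=M\smp R$, and naturality of $\eps$; alternatively, since $\M^Q$ for $\M$ is $(\M^{\op,\rev})_T$ for the dual right-monoidal category, the statement for $\lambda_L$ follows from that for $\lambda_N$ by the duality already used repeatedly (as in Lemma \ref{lem: smf}).

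\emph{Next} I would check that an arrow $t:M\to N$ in $\M_T$, i.e.\ one satisfying (\ref{modmap}) $t\ci\nabla_M=\nabla_N\ci(R\smp t)$, is a morphism of $E$-objects: $t\ci\lambda_M(r)=t\ci\nabla_M\ci(r\smp M)\ci\eta_M=\nabla_N\ci(R\smp t)\ci(r\smp M)\ci\eta_M$, and by naturality of $\smp$ (interchange law) $(R\smp t)\ci(r\smp M)=(r\smp N)\ci(R\smp t)$ and then $(R\smp t)\ci\eta_M=\eta_N\ci t$ by naturality of $\eta$, giving $\lambda_N(r)\ci t$. Dually for $\M^Q$. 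This shows $\F_q$ and $\F_z$ are well-defined on arrows; they preserve identities and composition because they leave underlying arrows untouched, and they are faithful for the same reason.

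\emph{The main obstacle} is purely bookkeeping: there is no conceptual difficulty, but the multiplicativity of $\lambda_N$ requires threading an $\eta$ through a $\nabla$ and invoking the correct one of the several unit identities (\ref{SMC10})--(\ref{SMC13}) derived in Lemma \ref{lem: T Q chi}; choosing the wrong normalization of which slot $\eta$ lands in produces a nearly-correct but mismatched string of composites. One should be careful to use $\mu_N\ci(R\smp\eta_N)=R\smp N$ (equation (\ref{SMC11})) rather than $\mu_N\ci\eta_{R\smp N}=R\smp N$ (equation (\ref{SMC10})), since the naturality square available for $\eta_N\ci\nabla_N$ places the extra copy of $R$ on the inside. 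Once the right identity is selected the collapse is a two-line calculation, which — consistent with the paper's style — I would leave to the reader.
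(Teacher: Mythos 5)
Your overall strategy coincides with the paper's: verify the monoid-morphism property explicitly on one side, obtain the other side from the $\M^{\op,\rev}$ duality (the paper treats the comodule side and dualizes to get the module side; you do the reverse), check equivariance of (co)module maps by naturality of $\smp$ and $\eta$ (resp.\ $\eps$), and note that functoriality and faithfulness are automatic because the functors are the identity on underlying arrows. Your unitality check, your module-map check, and the duality remark are all correct.

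However, the one step you yourself single out as delicate --- multiplicativity of $\lambda_N$ --- is precisely where your sketch goes wrong, in two ways. First, the naturality square for $\eta$ applied to $\nabla_N$ reads $\eta_N\ci\nabla_N=(R\smp\nabla_N)\ci\eta_{TN}$, so the extra copy of $R$ lands on the \emph{outside}, not the inside; hence the relevant unit identity is (\ref{SMC10}), $\mu_N\ci\eta_{TN}=TN$, and not (\ref{SMC11}) --- and even (\ref{SMC10}) cannot be applied directly because $r$ and $s$ sit between $\mu_N$ and $\eta_{TN}$. The route dual to the paper's computation for $\lambda_L$ is to reduce, via (\ref{act1}) and naturality, to $\nabla_N\ci\mu_N\ci(r\smp(s\smp N))\ci\eta_{TN}\ci\eta_N$, unpack $\mu_N=(\eps_R\smp N)\ci\gamma_{R,R,N}$, use naturality of $\gamma$, then (\ref{SMC2}) with $M=R$ to turn $\gamma_{R,R,N}\ci\eta_{TN}$ into $\eta_R\smp N$, and finally (\ref{SMC5}) together with naturality of $\eta$ and $\eps$ to collapse $\eps_R\ci(r\smp s)\ci\eta_R$ to $r\ci s$. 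Second, the outcome is $\lambda_N(r)\ci\lambda_N(s)=\lambda_N(r\ci s)$, not $\lambda_N(s\ci r)$ as you assert: with your order $\lambda_N$ would be an anti-morphism of monoids, i.e.\ $N$ would be a \emph{right} $E$-object, contradicting the statement you are proving. (A sanity check in the bialgebroid example of Section \ref{s: bgd}: $\lambda_N(r)(n)=n\cdot t^\sH(r(1))$, and since $t^\sH$ reverses multiplication while $r\mapsto r(1)$ preserves it, the composite is covariant in $r$.) Both slips are repairable, but as written the key calculation does not close.
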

\begin{proof}
Since $T$-modules in $\M$ are the $Q$-comodules of the opposite-reversed right-monoidal category $\M^{\op,\rev}$, it suffices to show that $\lambda_L$ is a monoid morphism and that every $t\in\M^Q$ is a morphism of $E$-objects.
\begin{align*}
\lambda_L(R)&=\eps_L\ci\Delta_L\eqby{coact2}L\\
\lambda_L(r_1)\ci\lambda_L(r_2)&=\eps_L\ci\eps_{L\smp R}\ci((L\smp r_1)\smp R)\ci((L\smp R)\smp r_2)\ci 
(\Delta_L\smp R)\ci\Delta_L=\\
&\eqby{coact1}\eps_L\ci\eps_{L\smp R}\ci((L\smp r_1)\smp r_2)\ci \delta_L\ci\Delta_L=\\
&=\eps_L\ci\eps_{L\smp R}\ci\gamma_{L,R,R}\ci(L\smp (r_1\smp r_2))\ci (L\smp\eta_R)\ci\Delta_L=\\
&\eqby{SMC3}\eps_L\ci(L\smp \eps_R)\ci(L\smp (r_1\smp r_2))\ci (L\smp\eta_R)\ci\Delta_L=\\
&=\eps_L\ci(L\smp r_1)\ci(L\smp\eps_R)\ci(L\smp\eta_R)\ci(L\smp r_2)\ci\Delta_L=\\
&\eqby{SMC5}\eps_L\ci(L\smp (r_1\ci r_2))\ci\Delta_L=\\
&=\lambda_L(r_1\ci r_2)\,.
\end{align*}
If $t:K\to L$ is a $Q$-comodule morphism then
\[
t\ci\lambda_K(r)=\eps_L\ci(t\smp r)\ci\Delta_K=\eps_L\ci(L\smp r)\ci\Delta_L\ci t=\lambda_L\ci t\,.
\]
\end{proof}

We note that for the free $Q$-comodules $N\smp R\rarr{\delta_{N}}(N\smp R)\smp R$, where $N$ is an arbitrary object in $\M$,
the above left $E$-action $\lambda_{N\smp R}$ reduces to the canonical $N\smp r$ left action $\lambda_2$ of the right-monoidal product $N\smp R$ of a $(0,0)$-type object with a $(1,0)$-type object. Dually, for free $T$-modules 
$\lambda_{R\smp N}(r)=r\smp N$. However, if $L$ is a $Q$-comodule and $M$ is a $T$-module then $L\smp R$ and $R\smp M$ are type $(2,1)$ and the question arises how the coaction and action behave with respect to the extra two $E$-actions. 
\begin{lem} \label{lem: central coactions quotient actions}
Assume $\M$ is complete. For every $Q$-comodule $L$ the coaction $\Delta_L$ is a morphism of left $E$-objects
and factorizes uniquely through the center of the $(2,1)$-type $E$-object $L\smp R$ as
\[
L\longrarr{\Delta_L^z}\int_{\lambda_1 \rho_1}L\smp R\equalizer{z_{L,R}}L\smp R
\]
in $\E$. Dually, assume $\M$ is cocomplete. Then the action $\nabla_M$ of every $T$-module $M$ belongs to $\E$ and has a unique factorization
\[
R\smp M\coequalizer{q_{R,M}}\int^{\rho_1 \lambda_2} R\smp M\longrarr{\nabla_M^q}M
\]
in $\E$ through the quotient of the $(2,1)$-type $E$-object $R\smp M$.
\end{lem}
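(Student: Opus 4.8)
The plan is to reduce the two halves to one another and then prove the $Q$-comodule half. First I would note that, by the opposite-reversed symmetry used in the proof of Lemma \ref{lem: forget to E-mod}, a $T$-module in $\M$ is the same thing as a $Q$-comodule in $\M^{\op,\rev}$, and the quotient construction $\int^{\rho_1\lambda_2}$ of Proposition \ref{pro: gamma+} is, by definition, the op-rev dual of the center construction $\int_{\lambda_1\rho_1}$ of Proposition \ref{pro: gammax}. Hence the statement about $T$-modules over a cocomplete $\M$ follows from the statement about $Q$-comodules over a complete $\M$ by passing to $\M^{\op,\rev}$, and I would treat only the latter.

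So fix a $Q$-comodule $L$. By Lemma \ref{lem: forget to E-mod} it is a $(1,0)$-type $E$-object via $\lambda_L(r)=\eps_L\ci(L\smp r)\ci\Delta_L$, and then by Lemma \ref{lem: * of E-objects} the object $L\smp R$ is a $(2,1)$-type $E$-object with left actions $\lambda_1(r)=\lambda_L(r)\smp R$, $\lambda_2(r)=L\smp r$, and right action $\rho_1(r)=(\eps_L\smp R)\ci\gamma_{L,R,R}\ci(L\smp(r\smp R))\ci(L\smp\eta_R)$. My first step would be to check that $\Delta_L$ equalizes the parallel pair $\lambda_1,\rho_1$. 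Expanding $\lambda_1(r)\ci\Delta_L$ using the formula for $\lambda_L$, rewriting $(\Delta_L\smp R)\ci\Delta_L$ as $\delta_L\ci\Delta_L=\gamma_{L,R,R}\ci(L\smp\eta_R)\ci\Delta_L$ by the coassociativity axiom \eqref{coact1}, and then commuting $(L\smp r)\smp R$ past $\gamma_{L,R,R}$ by naturality of $\gamma$ in its middle variable, one arrives exactly at $\rho_1(r)\ci\Delta_L$. Since $\M$ is complete the center $\int_{\lambda_1\rho_1}L\smp R$ exists as the equalizer \eqref{center} and $z_{L,R}$ is a monomorphism of $\M$, so there is a unique arrow $\Delta_L^z\colon L\to\int_{\lambda_1\rho_1}L\smp R$ in $\M$ with $z_{L,R}\ci\Delta_L^z=\Delta_L$.

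Next I would show that $\Delta_L$, hence $\Delta_L^z$, is an arrow of $\E$. The clean way is to observe that $\Delta_L$ is itself an arrow of $\M^Q$, from $\bra L,\Delta_L\ket$ to the cofree comodule $\bra L\smp R,\delta_L\ket$, because the comodule-morphism condition \eqref{comodmap} for $t=\Delta_L$ is precisely the axiom \eqref{coact1}. By Lemma \ref{lem: forget to E-mod} it is therefore a morphism of left $E$-objects from $\bra L,\lambda_L\ket$ to the cofree comodule equipped with its $\F_z$-induced action; and by the remark following that Lemma (applied with underlying object $L$) this action on $\bra L\smp R,\delta_L\ket$ is exactly $\lambda_2(r)=L\smp r$ --- alternatively one checks $\eps_{L\smp R}\ci((L\smp R)\smp r)\ci\delta_L=L\smp r$ directly using naturality of $\gamma$, axiom \eqref{SMC3}, naturality of $\eta$ and axiom \eqref{SMC5}. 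Thus $\Delta_L\ci\lambda_L(r)=\lambda_2(r)\ci\Delta_L$ for all $r\in E$. Since the center is made an $E$-object via $\lambda_2$ and $z_{L,R}$ is a $\lambda_2$-equivariant monomorphism, this identity passes through $z_{L,R}$ to show $\Delta_L^z\in\E$, and the $\E$-factorization is unique because the $\M$-factorization is.

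The part requiring care, which I expect to be the only genuine obstacle, is the bookkeeping of the three actions on the $(2,1)$-type object $L\smp R$: one must keep apart $\lambda_1$ (built from the comodule action $\lambda_L$), $\lambda_2$ (built from the identity action of $R$) and $\rho_1$ (the action created by the $\smp$-sign), and be confident that $\Delta_L$ equalizes precisely the pair $(\lambda_1,\rho_1)$ while it matches precisely $\lambda_2$ on the cofree comodule side. Once that is sorted out, each of the two verifications is a short calculation with the axioms \eqref{SMC1}--\eqref{SMC5} and naturality; the $T$-module half is then literally the same argument read in $\M^{\op,\rev}$, with (center, equalizer, $\lambda_i$) replaced throughout by (quotient, coequalizer, $\rho_i$).
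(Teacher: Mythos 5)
Your proposal is correct and follows essentially the same route as the paper's proof: you reduce the $T$-module half to the $Q$-comodule half via $\M^{\op,\rev}$, obtain $\lambda_2$-equivariance of $\Delta_L$ by viewing it as a comodule map into the free comodule $\bra L\smp R,\delta_L\ket$ and invoking Lemma \ref{lem: forget to E-mod}, and verify that $\Delta_L$ equalizes $(\lambda_1,\rho_1)$ by the same short computation with \eqref{coact1} and naturality of $\gamma$ (merely run in the opposite direction). The only cosmetic addition is your explicit check that $z_{L,R}$ transports the equivariance to $\Delta_L^z$, which the paper leaves implicit.
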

\begin{proof}
We prove the statement for $Q$-coactions. Since every comodule $L$ is an equalizer
\[
L\equalizer{\Delta_L}L\smp R\pair{\delta_L}{\Delta_L\smp R}(L\smp R)\smp R
\]
in $\M$ (it is split by $L\larr{\eps_L}L\smp R\larr{\eps_{L\smp R}}(L\smp R)\smp R$), the coaction $\Delta_L$ is a morphism of $Q$-comodules from $L$ to the free $Q$-comodule $L\smp R$. Therefore by Lemma \ref{lem: forget to E-mod} it is also a morphism
of $E$-objects with respect to the $\lambda_2$ action on $L\smp R$, i.e.,
\[
\Delta_L\ci\lambda_L(r)=(L\smp r)\ci\Delta_L\,,\qquad r\in E\,.
\]
As for the remaining two actions we can compute, using the expressions in Lemma \ref{lem: * of E-objects} for $\rho_1$,
$\lambda_1$, that
\begin{align*}
\rho_1(r)\ci\Delta_L&=(\eps_L\smp R)\ci((L\smp r)\smp R)\ci\delta_L\ci\Delta_L=\\
&=(\eps_L\smp R)\ci((L\smp r)\smp R)\ci(\Delta_L\ci R)\ci\Delta_L=(\lambda_L(r)\smp R)\ci\Delta_L=\\
&=\lambda_1(r)\ci\Delta_L
\end{align*}
from which the unique factorization through $z_{L,R}\in\,\E$ follows. 
\end{proof}

Note that in the above Lemma we avoided to use the notation $\smpq$ and $\smpz$ because under the given conditions they need not be skew-monoidal products.

\begin{thm} \label{thm: equivalence of module cats}
If $\M$ has colimits and the endofunctor $R\smp\under$ preserves coequalizers then
\begin{enumerate}
\item the endofunctor $M\mapsto T_q M:=\int^{\rho_1\lambda_2} R\smp M$ on $\E$ carries a unique monad structure
such that the forgetful functor $\phi:\,\E\to\M$ together with the coequalizer 
$T\phi M\overset{\kappa_M}{\twoheadrightarrow}\phi T_qM$ of $\rho_1$ 
and $\lambda_2$ is a monad morphism $\bra\phi,\kappa\ket$ from $T_q$ to $T$;
\item the functor $\phi_q$ induced by the monad morphism $\bra\phi,\kappa\ket$ is an equivalence of the Eilenberg-Moore categories such that
\begin{equation} \label{phi_q square}
\begin{CD}
\E_{T_q}@>\phi_q>>\M_T\\
@V{\F_{T_q}}VV @VV{\F_T}V\\
\E@>\phi>>\M
\end{CD}
\end{equation}
and the functor $\F_q:\M_T\to\,\E$ of Lemma \ref{lem: forget to E-mod} is monadic and satisfies
\begin{equation} \label{F_q triangles}
\F_q\phi_q\ =\ \F_{T_q}\,,\qquad \phi\F_q\ =\ \F_T\,.
\end{equation}
\end{enumerate}
\end{thm}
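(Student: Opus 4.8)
\textbf{Proof plan for Theorem \ref{thm: equivalence of module cats}.}
The plan is to build the monad $T_q$ on $\E$ not from scratch but by exploiting the general machinery already in place: the hypothesis ``$R\smp\under$ preserves coequalizers'' is exactly what is needed to upgrade the conclusion of Proposition \ref{pro: gamma+} (where the hypothesis was the stronger ``$L\smp\under$ preserves finite colimits for all $L$'') to the endofunctor $R\smp\under$ alone. First I would observe that the coequalizer $T\phi M\overset{\kappa_M}{\twoheadrightarrow}\phi T_qM$ of $\rho_1,\lambda_2$ exists for each $E$-object $M$ because $\M$ has colimits, and that it is preserved by $T=R\smp\under$ by hypothesis; this preservation is precisely what makes the two composites $T^2\phi M\rightrightarrows T\phi M$ descend along $\kappa$. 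Concretely, $\mu^q_M$ is obtained as the unique arrow making the square $T\phi T_qM\to\phi T_q T_q M$, $T\kappa_M$-before-$\kappa_{T_qM}$, commute with $T\kappa_M$ composed with $\mu_M$; existence and uniqueness follow because $T\kappa_M$ is a (split, even) epimorphism coequalizing the relevant pair, using Lemma \ref{lem: lambda-rho} to check that $\mu$ intertwines the $\rho_1,\lambda_2$ actions appropriately (equations (\ref{lambda-rho-11}), (\ref{lambda-rho-13})). The unit $\eta^q_M$ is $\kappa_M\ci\eta_{\phi M}$. The monad axioms for $T_q$ then follow by the usual ``cancel the epimorphism'' argument, and $\bra\phi,\kappa\ket$ is a monad morphism by construction. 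This establishes part (i).

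For part (ii), the key input is Beck's monadicity theorem together with a general fact about monad morphisms: if $\bra\phi,\kappa\ket:T_q\to T$ is a monad morphism with $\phi$ an equivalence and $\kappa_M$ epi, one expects the induced comparison functor $\phi_q:\E_{T_q}\to\M_T$ to be an equivalence. I would proceed as follows. The functor $\phi:\E\to\M$ is \emph{not} an equivalence (it is the forgetful functor from $E$-objects), so one cannot invoke that shortcut; instead I would verify directly that $\phi_q$ is essentially surjective and fully faithful. For essential surjectivity: given a $T$-module $\bra N,\nabla_N\ket$, Lemma \ref{lem: forget to E-mod} makes $N$ a left $E$-object, and Lemma \ref{lem: central coactions quotient actions} (using cocompleteness of $\M$, which follows from ``$\M$ has colimits'') factors $\nabla_N$ through the quotient as $\nabla^q_N:\int^{\rho_1\lambda_2}R\smp N=T_q N\to N$; one checks that $\bra N,\nabla^q_N\ket$ is a $T_q$-algebra in $\E$ whose image under $\phi_q$ recovers $\bra N,\nabla_N\ket$. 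For fullness and faithfulness: an arrow of $T_q$-algebras is an $E$-object arrow commuting with $\nabla^q$, an arrow of $T$-modules is an $\M$-arrow commuting with $\nabla$; since $\kappa_M=q_{R,M}$ is epi, commuting with $\nabla$ (via $\kappa$) is equivalent to commuting with $\nabla^q$, and since arrows in $\M_T$ are automatically $E$-object morphisms by Lemma \ref{lem: forget to E-mod}, the two notions of arrow coincide. Hence $\phi_q$ is an equivalence and the square (\ref{phi_q square}) commutes on the nose by the definition of the induced functor.

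Finally the triangles (\ref{F_q triangles}): $\phi\F_q=\F_T$ is immediate since $\F_q$ was defined in Lemma \ref{lem: forget to E-mod} to send a $T$-module to the underlying $E$-object, and $\F_T$ is the Eilenberg--Moore forgetful functor to $\M$; composing the left and bottom edges of (\ref{phi_q square}) gives $\phi\F_{T_q}=\F_q\phi_q\ci$ wait --- more carefully, from commutativity of (\ref{phi_q square}) we get $\F_T\phi_q=\phi\F_{T_q}$, and combined with $\phi\F_q=\F_T$ this yields $\F_q\phi_q=\F_{T_q}$ once $\phi$ is shown faithful (which it is, being a forgetful functor of $E$-objects). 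Monadicity of $\F_q$ then follows: $\F_q=\F_{T_q}\ci\phi_q^{-1}$ up to the equivalence $\phi_q$, and $\F_{T_q}$ is monadic by definition of the Eilenberg--Moore category, so $\F_q$ is monadic as well. I expect the main obstacle to be the careful bookkeeping in part (i): verifying that $\mu_M$ and $\eta_M$ genuinely descend along the coequalizer $\kappa_M$, which requires knowing that $R\smp\under$ applied to $\kappa$ is still a coequalizer (this is where the hypothesis bites) and that the $\rho$--$\lambda$ intertwining relations of Lemma \ref{lem: lambda-rho} line up exactly with the pairs being coequalized --- a routine but delicate diagram chase. The equivalence in part (ii), by contrast, should go smoothly once ``$\kappa$ is epi'' is available, since it reduces everything to the interplay between the universal property of the quotient and the $E$-object structure supplied by Lemmas \ref{lem: forget to E-mod} and \ref{lem: central coactions quotient actions}.
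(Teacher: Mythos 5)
Your proof is essentially correct, but it takes a genuinely different route from the paper. The paper disposes of this theorem in one line by dualizing Theorem \ref{thm: equivalence of comodule cats}, which is itself obtained as the special case of the lax-comonad equivalence (Proposition \ref{pro: bfQ} and Theorem \ref{thm: equivalence of lax comodule cats}) in which left exactness lets one choose the equalizers so that $\bfQ_n=(Q^z)^n$; the real work is thus deferred to the simplicial machinery of Section 6. You instead give a direct, self-contained construction: descend $\mu$ and $\eta$ along the coequalizer $\kappa$ using the intertwining relations of Lemma \ref{lem: lambda-rho}, then verify essential surjectivity via Lemma \ref{lem: central coactions quotient actions} and full faithfulness via Lemma \ref{lem: forget to E-mod} and epi-ness of $\kappa$. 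Your route is arguably more economical for this particular statement (it never leaves Sections 4--5 and makes the role of the hypothesis transparent), whereas the paper's route buys the generalization to the non-exact case for free. Two small repairs are needed. First, $T\kappa_M$ is not split epi in general (the coequalizer of $\rho_1,\lambda_2$ is not split even in the module case); its epi-ness is exactly what the hypothesis that $R\smp\under$ preserves coequalizers delivers, so drop the parenthetical. Also note that descent of $\mu$ requires coequalizing \emph{both} pairs $(\rho_1,\lambda_2)$ and $(\rho_2,\lambda_3)$ on $T^2\phi M$, which uses (\ref{lambda-rho-5}) in addition to (\ref{lambda-rho-11}) and (\ref{lambda-rho-13}). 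Second, faithfulness of $\phi$ alone does not yield $\F_q\phi_q=\F_{T_q}$ from $\phi\F_q\phi_q=\phi\F_{T_q}$, because $\phi$ is not injective on objects: one must check that the $E$-action induced on $\phi M$ by the $T$-action $\phi\alpha\ci\kappa_M$ via (\ref{T-induced lambda}) coincides with the original $E$-action on $M$. This follows from (\ref{lambda-rho-9}), (\ref{lambda-rho-2}) and the unit axiom for $\alpha$, and the same computation is also what legitimizes your full-faithfulness argument (an arrow in $\M_T$ is an $E$-object map \emph{for the right $E$-structures}). With these points filled in, the argument is complete.
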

\begin{proof} 
This Theorem follows by dualizing the next Theorem \ref{thm: equivalence of comodule cats} 
\end{proof}

\begin{thm} \label{thm: equivalence of comodule cats} 
If $\M$ has limits and the endofunctor $\under\smp R$ preserves equalizers then 
\begin{enumerate}
\item the endofunctor $M\mapsto Q^z M:=\int_{\lambda_1\rho_1} M\smp R$ on $\E$ carries a unique comonad structure
such that the forgetful functor $\phi:\,\E\to\M$ together with the equalizer 
$\phi Q^zM\overset{\zeta_M}{\rightarrowtail}Q\phi M$ of $\lambda_1$ and $\rho_1$ 
is a comonad morphism $\bra\phi,\zeta\ket$ from $Q^z$ to $Q$;
\item the functor $\phi_z$ induced by the comonad morphism $\bra\phi,\zeta\ket$ is an equivalence of the Eilenberg-Moore categories such that
$\F^Q\phi_z=\phi\F^{Q^z}$ and the functor $\F_z:\M^Q\to\,\E$ of Lemma \ref{lem: forget to E-mod} is comonadic and satisfies
\[
\F_z\phi_z\ =\ \F^{Q^z}\,,\qquad \phi\F_z\ =\ \F^Q\,.
\]
\end{enumerate}
\end{thm}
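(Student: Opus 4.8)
The plan is to transfer the comonad $\bra Q,\delta,\eps\ket$ of Lemma~\ref{lem: T Q chi} from $\M$ to $\E$ along the equalizers $\zeta_M:=z_{M,R}$ of \eqref{center}, and then to check that the functor $\phi_z\colon\E^{Q^z}\to\M^Q$ induced by the resulting comonad morphism is an isomorphism of categories by writing down its inverse via Lemma~\ref{lem: central coactions quotient actions}. The point that makes everything go is that, for $M\in\E$, the arrows $\lambda_1(r)$ and $\rho_1(r)$ on $M\smp R$ are endomorphisms of the cofree $Q$-comodule $\bra M\smp R,\delta_M\ket$ --- this combines naturality of $\delta$ with the relations $\delta_L\ci\lambda_1(r)=\lambda_1(r)\ci\delta_L$ (valid since $M\in\E$) and $\delta_L\ci\rho_1(r)=\rho_1(r)\ci\delta_L$ of Lemma~\ref{lem: lambda-rho}. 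Since $\under\smp R$ preserves equalizers, $\zeta_M\smp R=Q\zeta_M$ is a regular monomorphism, and one verifies that $\delta_M\ci\zeta_M$ factors through it; the factorization $\Delta_{Q^zM}\colon Q^zM\to Q^zM\smp R$ is the unique coaction making $Q^zM$ a $Q$-comodule with $\zeta_M$ a comodule morphism. Equipping $Q^zM$ with its $E$-action $\lambda_2$ --- which, one checks from Lemma~\ref{lem: lambda-rho} and the counit law \eqref{SMC12}, is exactly the $E$-action induced from $\Delta_{Q^zM}$ as in Lemma~\ref{lem: forget to E-mod} --- makes $Q^z$ a well-defined endofunctor of $\E$.

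Next I would install the comonad structure. The counit $\eps^z_M:=\eps_M\ci\zeta_M$ lies in $\E$ because $\zeta_M$ equalizes $\lambda_1,\rho_1$ and, by naturality of $\eps$ together with $\eps_L\ci\rho_1(r)=\eps_L\ci\lambda_2(r)$ of Lemma~\ref{lem: lambda-rho}, $\eps_M$ sends both of those actions to the action of $M$. For the comultiplication, Lemma~\ref{lem: central coactions quotient actions} applied to the $Q$-comodule $Q^zM$ factors its coaction through $\zeta_{Q^zM}\colon Q^zQ^zM\rightarrowtail Q^zM\smp R$, so $\delta_M\ci\zeta_M=(\zeta_M\smp R)\ci\Delta_{Q^zM}$ factors through the monomorphism $(\zeta_M\smp R)\ci\zeta_{Q^zM}$, and I take the (unique) factorizing arrow to be $\phi\delta^z_M$; it lies in $\E$ again by Lemma~\ref{lem: central coactions quotient actions}. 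The comonad axioms for $\bra Q^z,\delta^z,\eps^z\ket$ and the three identities exhibiting $\bra\phi,\zeta\ket$ as a comonad morphism all follow, on cancelling the monomorphisms $\zeta$ and the faithful $\phi$, from the corresponding properties of $Q$ proved in Lemma~\ref{lem: T Q chi}; for example $\zeta_M\ci\eps^z_{Q^zM}\ci\delta^z_M=\eps_{M\smp R}\ci\delta_M\ci\zeta_M=\zeta_M$ by \eqref{SMC12}. The same cancellations force the uniqueness asserted in (i).

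For (ii) I would build an explicit inverse $\Psi$ of $\phi_z$. Given $\bra L,\Delta_L\ket\in\M^Q$, view $L$ in $\E$ via Lemma~\ref{lem: forget to E-mod}; Lemma~\ref{lem: central coactions quotient actions} furnishes the unique $\E$-arrow $\Delta^z_L\colon L\to Q^zL$ with $\zeta_L\ci\Delta^z_L=\Delta_L$, and cancelling the monos $\zeta$ turns the $Q$-coalgebra axioms \eqref{coact1}, \eqref{coact2} into the $Q^z$-coalgebra axioms for $\bra L,\Delta^z_L\ket$. Since the morphisms of $\M^Q$ between comodules are precisely the $\E$-morphisms between underlying $E$-objects commuting with the $Q^z$-coactions ($\zeta$ being a monic natural transformation, and $\M^Q$-arrows already being $E$-morphisms by Lemma~\ref{lem: forget to E-mod}), the rule $\bra L,\Delta_L\ket\mapsto\bra L,\Delta^z_L\ket$ defines $\Psi\colon\M^Q\to\E^{Q^z}$. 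Then $\phi_z\Psi=\id_{\M^Q}$ (because $\zeta_L\ci\Delta^z_L=\Delta_L$, and the $E$-structure induced back on $L$ is the original one by $\eps^z_L\ci\Delta^z_L=\id_L$) and $\Psi\phi_z=\id_{\E^{Q^z}}$ (because the factorization of $\zeta_M\ci\nu$ through the mono $\zeta_M$ is $\nu$). Hence $\phi_z$ is an isomorphism of categories, in particular an equivalence, with $\F^Q\phi_z=\phi\F^{Q^z}$; and since $\F_z=\F^{Q^z}\Psi$, the functor $\F_z$ is comonadic with $\F_z\phi_z=\F^{Q^z}$ and $\phi\F_z=\F^Q$.

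The part I expect to be most delicate is the bookkeeping around Lemma~\ref{lem: lambda-rho}: checking that $\lambda_1(r),\rho_1(r)$ are genuinely comodule endomorphisms, that $\delta_M\ci\zeta_M$ really factors through the regular monomorphism $\zeta_M\smp R$ (this is the single place where the hypothesis that $\under\smp R$ preserves equalizers enters, which is why it can be weakened relative to Proposition~\ref{pro: gammax}), and that the induced and the $\lambda_2$ $E$-structures on $Q^zM$ coincide. Everything after that is mechanical cancellation of monomorphisms against the already-established comonad $Q$ together with the factorization of Lemma~\ref{lem: central coactions quotient actions}.
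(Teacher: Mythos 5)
Your proof is correct, and it takes a genuinely different route from the paper's. The paper disposes of this theorem in three lines by specializing the lax-comonad machinery of Section \ref{s: lax Q}: Proposition \ref{pro: bfQ} builds the lax comonad $\bfQ$ on $\E$ from completeness of $\M$ alone, Theorem \ref{thm: equivalence of lax comodule cats} establishes $\E^{\bfQ}\simeq\M^Q$ by essentially the same faithful--full--eso argument you run for $\phi_z$, and the hypothesis that $\under\smp R$ preserves equalizers enters only to identify $\bfQ_n$ with $(Q^z)^n$, i.e.\ to collapse the lax comonad into an honest one. You instead transfer the comonad structure directly along the monic natural transformation $\zeta$, using Lemma \ref{lem: central coactions quotient actions} twice --- once to manufacture $\delta^z$ and once to produce an explicit inverse $\Psi$ of $\phi_z$ --- so that $\phi_z$ comes out as an isomorphism of categories, marginally more than the stated equivalence, and all axioms reduce to cancelling the monos $\zeta$ against the already-verified comonad $Q$. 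What your route buys is independence from Section \ref{s: lax Q} and a transparent accounting of where each hypothesis is used; what the paper's route buys is uniformity with the non-flat case, where only the lax statement survives and your construction of $\delta^z$ is unavailable.

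One caveat, which you share with the paper rather than introduce: the equalizer (\ref{center}) is an equalizer of a \emph{pair} of arrows into the power $\prod_{r\in E}M\smp R$, so the hypothesis literally gives that $\zeta_M\smp R$ equalizes $\bra\lambda_1\ket\smp R$ and $\bra\rho_1\ket\smp R$, whereas your factorization of $\delta_M\ci\zeta_M$ (the step you rightly single out as delicate) is verified componentwise in $r$. Passing from the componentwise identities $(\lambda_1(r)\smp R)\ci\delta_M\ci\zeta_M=(\rho_1(r)\smp R)\ci\delta_M\ci\zeta_M$ to the bundled one requires the family $\{\pi_r\smp R\}_{r\in E}$ to be jointly monic, e.g.\ that $\under\smp R$ preserves the relevant powers or that $E$ acts through a suitable generated piece; plain preservation of equalizers does not formally yield this. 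The paper's own reduction (``left exactness of $Q$ implies $\bfQ_n=(Q^z)^n$'') rests on exactly the same strengthening, so this is a point to flag in a write-up rather than a defect of your argument relative to the source.
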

\begin{proof}
This Theorem is the special case of the lax version proven in the next section. Part (i) follows from Proposition \ref{pro: bfQ} 
and part (ii) from Theorem \ref{thm: equivalence of lax comodule cats} 
after noticing that left exactness of $Q$ implies the possibility to choose the equalizers $\zeta^n$ in such a way that
$\bfQ_n=(Q^z)^n$ for each $n\geq 0$.
\end{proof}
\begin{exa}
For a right $R$-bialgebroid $H$ as in Section \ref{s: bgd} the monad $T$ is $\under\oR H$ associated to the $R$-ring
$R\rarr{s^\sH}H$ and $T_q$ is $\under\am{R^e}H$ associated to the $R^e$-ring $R^\op\ot R\rarr{t^\sH\ot s^\sH}H$.
The monad morphism $\kappa_M$ is the canonical projection $M\oR H\epi{}M\am{R^e}H$ and the fact that it induces an
equivalence between the corresponding right $H$-module categories can be considered as a well-known fact in the bialgebroid
literature and it is a consequence of the fact that $T$ is right exact. However, the dual statement Theorem 
\ref{thm: equivalence of comodule cats} presents a warning that the category $(\Ab_R)^H$ of right comodules over the $R$-coring
$H$ may not be equivalent to the Eilenberg-Moore category of the comonad $Q^z$ on $_R\Ab_R$ unless $_RH$ is flat, i.e., $Q$ is
left exact. This equivalence is crucial in Tannaka duality where we want $Q^z$ a monoidal comonad on the bimodule category
$_R\Ab_R$. Without left exactness the $Q^z$ will not even be a comonad. What replaces $Q^z$ in the general case is a lax comonad
discussed in the next section. 
\end{exa}

\section{The lax comonad $\bfQ$} \label{s: lax Q}

In \cite[Proposition 4.2]{Day-Street} Day and Street have characterized (left) $R$-bialgebroids as comonoids in the lax monoidal category of monads on $R^e$ where the lax monoidal structure is given by $n$-fold Takeuchi products 
$M_1\x_R\dots\x_R M_n$.
Here we shall concentrate on the closely related but simpler structure of monoidal lax comonads on the category $\E$ of 
$E$-objects but ignore monoidality altogether, as we did so far for $T$ and $T_q$, and be content with proving equivalence of 
$\M^Q$ with the category $\E^\bfQ$ of comodules for the lax comonad $\bfQ$ with the hope in mind that if $\E$ is provided a
`good' monoidal structure then $\E^\bfQ$ will become monoidal, too. 

Let $\Delta$ be the category of finite ordinals and order preserving maps equipped with the strict monoidal structure of ordinal
addition $+$. By a lax comonad on a category $\E$ we mean a monoidal functor $G:\Delta^\op\to \End \E$ to the strict monoidal category of endofunctors on $\E$ with composition of functors as monoidal product. The monoidal structure of $G$ is given by an `arrow' $\iota:\id_\E\to G_0$ of $\End\E$ and a natural transformation $\nu_{m,n}:G_mG_n\to G_{m+n}$ satisfying 3 axioms,
as usual. If the functor $G$ happens to be strict monoidal then the object map of $G$ is $G_n=(G_1)^n$ and we recover an ordinary
comonad $\bra G_1, G_{2\to 1},G_{0\to 1}\ket$ on $\E$. 

The generalization of the Eilenberg-Moore category for the lax situation goes as follows.
A comodule over a lax comonad $\bra \E,G\ket$ consists of an object $M$ of $\E$ and arrows $\alpha_n:M\to G_n M$ for 
each $n\geq 0$ such that
\begin{align*}
G_f\ci\alpha_n&=\alpha_m\quad \forall f:m\to n \\
\alpha_{m+n}&=\nu_{m,n}M\ci G_m\alpha_n\ci\alpha_m\quad \forall m,n\geq 0\\
\alpha_0&=\iota_M\,.
\end{align*}
A comodule map $\bra M,\alpha\ket\rarr{t}\bra N,\beta\ket$ is an arrow $M\rarr{t}N$ in $\E$ such that
\[
\begin{CD}
M@>t>>N\\
@V{\alpha_n}VV @VV{\beta_n}V\\
G_n M@>G_n t>>G_n N
\end{CD}
\qquad\forall n\geq 0\,.
\]
The category of $G$-comodules and their comodule maps is denoted by $\E^G$. The forgetful functor $\E^G\to\E$, $\bra M,\alpha\ket\mapsto M$ is faithful, reflects isomorphisms but not left adjoint in general.

In order to justify the above definition of $\E^G$ it is worth looking at its 2-categorical interpretation.
For lax comonads $F$ on $\D$ and $G$ on $\E$ a morphism of lax comonads $\bra \D,F\ket\to\bra \E,G\ket$ can be defined to 
consist of a functor $U:\D\to\E$ and natural transformations 
\[
\xi_n:UF_n\to G_nU :\D\to\E\qquad\text{natural in }n\in\Delta^\op
\]
and obeying the following monoidality conditions
\[
\begin{CD}
UF_mF_n@>\xi_mF_n>>G_mUF_n@>G_m\xi_n>>G_mG_nU\\
@VVV @.@VVV\\
UF_{m+n}@.\longrarr{\xi_{m+n}}@.G_{m+n}U
\end{CD}
\qquad
\begin{CD}
U@=U\\
@V{}VV @VV{}V\\
UF_0@>\xi_0>>G_0U
\end{CD}
\]
A modification $\tau:\bra U,\xi\ket\to\bra V,\upsilon\ket:\bra \D,F\ket\to\bra \E,G\ket$ is a natural transformation $\tau:U\to V$
satisfying
\[
\begin{CD}
UF_m@>\tau F_m>>VF_m\\
@V{\xi_m}VV @VV{\upsilon_m}V\\
G_m U@>G_m\tau>>G_mV
\end{CD}
\qquad\forall m\geq 0\,.
\]
With the obvious horizontal and vertical compositions the lax comonads, their morphisms and modifications form a 2-category
$\LaxCmd$. 

\begin{lem} \label{lem: lax-cmd one}
Let $\one$ be the identity comonad on the terminal category $1$. Then for any lax comonad $\bra \E,G\ket$ the Eilenberg-Moore category $\E^G$ of $G$-comodules can be identified with the hom-category $\LaxCmd(\bra 1,\one\ket,\bra\E,G\ket)$.
\end{lem}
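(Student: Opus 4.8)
The plan is to unwind both sides of the claimed identification and observe that they are literally the same data. First I would recall that a lax comonad morphism $\bra 1,\one\ket\to\bra\E,G\ket$ consists of a functor $U:1\to\E$ — equivalently a choice of object $M=U(*)\in\E$ — together with natural transformations $\xi_n:UF_n\to G_nU$ natural in $n\in\Delta^\op$. Since $\one$ is the identity comonad, $F_n=\id_1$ for all $n$, so $UF_n=U$ and $\xi_n$ is simply an arrow $M\to G_nM$ in $\E$; call it $\alpha_n$. Naturality in $n\in\Delta^\op$ says precisely that $G_f\ci\alpha_n=\alpha_m$ for every $f:m\to n$ in $\Delta$, which is the first comodule axiom. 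The hexagonal monoidality square for $\bra U,\xi\ket$ reads $\xi_{m+n}=(G_m\xi_n)\ci(\xi_mF_n)$ after using $F_mF_n=\id_1$ and the definition of $\nu$; spelling out the composite in $\End\E$ evaluated at $M$ this is exactly $\alpha_{m+n}=\nu_{m,n}M\ci G_m\alpha_n\ci\alpha_m$, the second comodule axiom. The unit triangle, using that the unit of $\one$ is the identity, becomes $\xi_0=\iota_U$, i.e. $\alpha_0=\iota_M$, the third axiom. Thus objects of $\LaxCmd(\bra1,\one\ket,\bra\E,G\ket)$ are in bijection with $G$-comodules.

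Next I would do the same bookkeeping for morphisms. A modification $\tau:\bra U,\xi\ket\to\bra V,\upsilon\ket$ is a natural transformation $\tau:U\to V$ of functors $1\to\E$, i.e. a single arrow $t:M\to N$ in $\E$ (naturality being vacuous over the one-object category $1$), subject to the modification square $(\upsilon_m)\ci(\tau F_m)=(G_m\tau)\ci(\xi_m)$ for all $m$. With $F_m=\id_1$ and the identifications $\xi_m=\alpha_m$, $\upsilon_m=\beta_m$ this is $\beta_m\ci t=G_m(t)\ci\alpha_m$, which is exactly the defining commuting square for a $G$-comodule map $\bra M,\alpha\ket\to\bra N,\beta\ket$. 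One then checks that vertical composition of modifications corresponds to composition of comodule maps and that identities match, which is immediate from the definitions. Hence the assignment is a (strict) isomorphism of categories $\LaxCmd(\bra1,\one\ket,\bra\E,G\ket)\cong\E^G$, and I would state the functoriality/naturality of this identification in $\bra\E,G\ket$ as an easy consequence if needed.

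The only genuinely delicate point is a careful matching of the coherence constants: one must verify that when the general hexagon and unit triangle of a lax comonad morphism are specialised with $F=\one$, the leftover structure maps $\nu_{m,n}$ and $\iota$ of $G$ appear in precisely the positions demanded by the definition of $\E^G$ — in particular that no spurious component of the (trivial) comonad structure of $\one$ survives. This is routine but must be done honestly; it amounts to chasing the two monoidality diagrams in the definition of $\LaxCmd$-morphisms and the single modification square, evaluating all endofunctors at the object $M$, and reading off the three comodule equations and the one comodule-map equation. Since everything is a direct translation with no real content beyond this diagram chase, I expect the ``obstacle'' to be purely notational rather than mathematical.
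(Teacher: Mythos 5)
Your proposal is correct and follows essentially the same route as the paper, which simply states that unwinding a lax comonad morphism $\bra 1,\one\ket\to\bra\E,G\ket$ yields exactly an object $M$ with structure maps $\alpha_n:M\to G_nM$ satisfying the three comodule relations, and that modifications are exactly comodule maps. Your version just carries out this diagram chase explicitly, including the correct identification of the monoidality hexagon with the second comodule axiom and the unit triangle with $\alpha_0=\iota_M$.
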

\begin{proof}
A morphism of lax comonads from $\one$ to $G$ is an object $M$ of $\E$ equipped with $\alpha_n:M\to G_n M$, $n\geq 0$,
satisfying precisely the defining relations of a $G$-comodule. A modification $\bra M,\alpha\ket\rarr{t}\bra N,\beta\ket$ in turn
is an arrow $M\rarr{t}N$ satisfying $\beta_n\ci t=G_n t\ci\alpha_n$, $n\geq 0$, i.e., a comodule map.
\end{proof}

By extending Lemma \ref{lem: lax-cmd one} notice that a morphism $\bra U,\xi\ket:\bra \D,F\ket\to\bra \E,G\ket$ of lax comonads
induces a functor 
\[
\LaxCmd(\bra 1,\one\ket,\bra U,\xi\ket)\ :\ \D^F\to\E^G
\]
between the Eilenberg-Moore categories the object map of which is
\[
\bra D,\alpha\ket\mapsto\bra UD,(UD\rarr{U\alpha_n}UF_n D\rarr{\xi_n D}G_nUD)_{n\geq 0}\ket\,.
\]

After this preparation we can introduce the \textit{canonical lax comonad} $\bfQ$ of a right-monoidal category 
$\bra \M,\smp,R,\gamma,\eta,\eps\ket$. For an $E$-object $M$ we define $\bfQ_n M$ by delaying the action of the ends in $(Q^z)^n M$, i.e., by the formula
\[
\bfQ_n M:=\int_{\lambda_1\rho_1}\dots\int_{\lambda_n\rho_n}(\dots(M\smp R)\smp\dots\smp R)\smp R
\]
where the number of $R$-s is $n$ and the left and right $E$-actions $\lambda_i$, $\rho_i$ are labeled according to what we said
in Section \ref{s: E-obj}. The result $\bfQ_n M$ becomes a left $E$-object via $\lambda_{n+1}$ which is the action on the last $R$ factor. 

$\bfQ_0$ is the identity functor and $\bfQ_1$ is the endofunctor $Q^z$ of Corollary \ref{cor: (co)monad morphisms} (ii).
But now, without the assumption that $\under\smp R$ preserves equalizers, $Q^z$ does not inherit a comonad structure from 
that of $Q$. Although $\eps^z:\bfQ_1\to \bfQ_0$ exists we cannot define comultiplication $\bfQ_1\to\bfQ_1^2$. Instead we can
define a natural transformation $\delta^1_0:\bfQ_1\to\bfQ_2$. 

\begin{pro} \label{pro: bfQ}
Let $\M$ be a right-monoidal category whose underlying category $\M$ is complete. Let $\E$ be the category of $E$-objects
in $\M$, $\phi$ the forgetful functor $\E\to\M$ and define the endofunctors $\bfQ_n$ on $\E$ for $n\geq 0$ by the equalizers
\[
\phi\bfQ_n M\equalizer{\zeta^n_M}Q^n\phi M\longerpair{\bra\lambda_1,\dots,\lambda_n\ket}{\bra\rho_1,\dots,\rho_n\ket}
\{E^{\ot n},Q^n\phi M\}
\]
where $\{\ ,\ \}$ denotes cotensor (=power) in $\M$. Then $n\mapsto \bfQ_n$ is the object map of a unique lax comonad $\bfQ$
on $\E$ such that $\phi$ together with $\{\zeta^n|n\geq 0\}$ is a morphism of lax comonads $\bfQ\to Q$.
\end{pro}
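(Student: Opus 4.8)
The plan is to build the lax comonad structure on $\bfQ$ by transporting, via the defining equalizers, the obvious "partial" comonad-like structure that $Q^n$ carries on $\M$, and then to check uniqueness and the morphism-of-lax-comonads conditions. First I would record the functoriality of each $\bfQ_n$: since $\M$ is complete, the equalizer defining $\phi\bfQ_n M$ exists for every $E$-object $M$, and naturality of $Q$ in $M$ together with the fact that $\lambda_i,\rho_i$ are natural (as established around Lemma \ref{lem: lambda-rho} and Lemma \ref{lem: * of E-objects}) shows that $M\mapsto \bfQ_n M$ is a functor $\E\to\E$, equipped with a natural monomorphism $\zeta^n:\phi\bfQ_n\to Q^n\phi$. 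The key structural point is that the composite $Q^m\zeta^n\ci\zeta^m Q^n\phi:\phi\bfQ_m\bfQ_n\to Q^{m+n}\phi$ equalizes all the actions $\lambda_i$, $\rho_i$ for $1\le i\le m+n$; this is again a consequence of naturality of the $Q$-structure maps together with the commutation relations in Lemma \ref{lem: lambda-rho}. Hence it factors uniquely through $\zeta^{m+n}$, and this factorization \emph{defines} $\nu_{m,n}:\bfQ_m\bfQ_n\to\bfQ_{m+n}$. Similarly $\iota:\id_\E\to\bfQ_0$ is forced since $\bfQ_0=\id$, and for each $f:m\to n$ in $\Delta$ the arrow $Q^f:Q^n\to Q^m$ (built from $\delta$ and $\eps$ of the comonad $Q$) restricts along the $\zeta$'s to give $\bfQ_f:\bfQ_n\to\bfQ_m$ — one must check $Q^f$ carries the $\lambda_i,\rho_i$-equalizer into the $\lambda_j,\rho_j$-equalizer, which is exactly compatibility of the $Q$-structure with the $E$-actions.

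With these data in hand, the monoidality axioms of a lax comonad (associativity of $\nu$, the two unit axioms for $\iota$, and functoriality of $n\mapsto\bfQ_n$ over $\Delta^\op$) all follow by a standard argument: each side of each axiom, when post-composed with the appropriate $\zeta^k$ (which is monic), becomes a morphism into $Q^k\phi$ built only from $\delta$, $\eps$ and identities, and there the corresponding identity holds because $Q$ is a genuine comonad on $\M$ and $Q^\bullet$ is strict monoidal in the sense of the usual cosimplicial identities. Thus the axioms for $\bfQ$ are pulled back from those for $Q$ along the jointly-monic family $\{\zeta^k\}$. The uniqueness clause of the Proposition is immediate from the same principle: any lax comonad structure on $n\mapsto\bfQ_n$ making $\langle\phi,\{\zeta^n\}\rangle$ a morphism of lax comonads must satisfy precisely the equations $\zeta^{m+n}\ci\phi\nu_{m,n}=Q^m\zeta^n\ci\zeta^mQ^n\phi$, $\zeta^0\ci\phi\iota=\id$, and $\zeta^m\ci\phi\bfQ_f=Q^f\ci\zeta^n$, and since every $\zeta^k$ is monic these determine $\nu$, $\iota$ and $\bfQ_f$ uniquely; that the so-determined data form a morphism of lax comonads is then the content of the monoidality-condition diagrams displayed before Lemma \ref{lem: lax-cmd one}, which again reduce, after composing with a monic $\zeta$, to identities in the comonad $Q$.

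I expect the only real subtlety — the step I would treat most carefully — to be the claim that each of the relevant composites of $\zeta$'s \emph{does} equalize the full family of left and right $E$-actions on the iterated product, so that the factorizations defining $\nu_{m,n}$ and $\bfQ_f$ exist. This is where one genuinely uses the bookkeeping of Section \ref{s: E-obj}: on $(\cdots(M\smp R)\smp\cdots)\smp R$ with $n$ copies of $R$ there are $n$ left actions $\lambda_1,\dots,\lambda_n$ and $n-1$ right actions, plus the "outer" $\lambda_{n+1}$; composing with $Q$ on the outside (i.e. $\smp R$ once more) shifts indices and introduces one new right action, and one must verify that $\delta$, $\eps$ and the $z$-equalizer maps intertwine these in the pattern recorded in Lemma \ref{lem: lambda-rho} (relations \eqref{lambda-rho-1}, \eqref{lambda-rho-8}, and their companions). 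Completeness of $\M$ is used precisely to guarantee that the cotensors $\{E^{\otimes n},Q^n\phi M\}$ and the displayed equalizers exist; no exactness of $\under\smp R$ is needed, which is exactly the point of passing from $Q^z$ to $\bfQ$. The remaining verifications are the routine diagram chases I would leave to the reader, exactly as the paper does for the analogous earlier statements.
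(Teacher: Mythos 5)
Your proposal is correct and follows essentially the same route as the paper: define $\bfQ_f$ (via the face/degeneracy maps $\delta^n_i$, $\eps^n_i$) and $\nu^{m,n}$ as the unique factorizations through the equalizers $\zeta$, take $\iota$ to be the identity since $\bfQ_0=\id$, and verify all axioms and uniqueness by composing with the monic $\zeta$'s so that everything reduces to identities already holding for the comonad $Q$ on $\M$, with the equalizing conditions checked via Lemma \ref{lem: lambda-rho}. The only blemish is notational: the composite defining $\nu^{m,n}$ should read $Q^m\zeta^n\ci\zeta^m\bfQ_n$ (equivalently $\zeta^mQ^n\ci\bfQ_m\zeta^n$), not $Q^m\zeta^n\ci\zeta^mQ^n\phi$.
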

\begin{proof}
In order to extend $\bfQ$ to a functor $\Delta^\op\to\End\E$ it suffices to define it on the elementary monotone functions
$i+(2\to 1)+j$ and $i+(0\to 1)+j$. Naturality of $\zeta^n$ determines them to be the unique $\delta^n_i:\bfQ_n\to\bfQ_{n+1}$
and $\eps^n_i:\bfQ_n\to\bfQ_{n-1}$, respectively, such that
\begin{align}
\label{def of delta^n_i}
\zeta^{n+1}\ci\delta^n_i&=Q^i\delta Q^{n-i-1}\ci\zeta^n\qquad i=0,1,\dots,n-1,\quad n\geq 0\\
\label{def of eps^n_i}
\zeta^{n-1}\ci\eps^n_i&=Q^i\eps Q^{n-i-1}\ci\zeta^n\qquad i=0,1,\dots,n-1,\quad n > 0.
\end{align}
For their existence the reader should check that the RHS satisfies the equalizing conditions of the $\zeta$ on the LHS as a consequence of the properties of $\delta$ and $\eps$ given in Lemma \ref{lem: lambda-rho}. The form of the RHS of these equations makes it obvious that they satisfy the usual relations that a simplicial object in $\End\E$ should have. This proves that $\bfQ$ is a functor.

As for the monoidal structure $\nu^{m,n}:\bfQ_m\bfQ_n\to\bfQ_{m+n}$ the requirement that $\zeta$ be monoidal leaves only
one possibility,
\begin{equation} \label{def of nu}
\zeta^{m+n}\ci\nu^{m,n}=Q^m\zeta^n\ci\zeta^m\bfQ_n\equiv\zeta^m Q^n\ci\bfQ_m\zeta^n\,,
\end{equation}
which exists by the equalizing properties of the RHS. Since $\bfQ_0=\one_\E$, we can take $\iota$ to be the identity natural transformation $\one_\E\to\one_\E$, provided we also choose $\zeta^0$ to be the identity. Then the monoidality conditions on $\zeta$ are built in the definition of $\nu$ and $\iota$ and the monoidality constraints on $\nu$ and $\iota$ boil down to
\begin{align*}
\nu^{l+m,n}\ci\nu^{l,m}\bfQ_n&=\nu^{l,m+n}\ci\bfQ_l\nu^{m,n}\\
\nu^{0,n}&=\bfQ_n\\
\nu^{m,0}&=\bfQ_m.
\end{align*}
The last two follow from uniqueness of $\nu$ and the first can be shown by multiplying it with $\zeta^{l+m+n}$ and using 
(\ref{def of nu}).

Finally, we have to show naturality of $\nu$ (and of $\iota$). That is to say, we need a proof of
\begin{align*}
\bfQ_{f+g}\ci\nu^{m,n}&=\nu^{m',n'}\ci\bfQ_f\bfQ_g\\
&\forall f:m'\to m,\ g:n'\to n\ \text{in }\Delta\,.
\end{align*}
It suffices to prove this for $f$ and $g$ being elementary functions, that is to say, to prove
\[
\delta^{m+n}_i\ci\nu^{m,n}=\left\{
\begin{aligned} 
\nu^{m+1,n}\ci\delta^m_i\bfQ_n&\quad\text{if } i<m\\ \nu^{m,n+1}\ci\bfQ_m\delta^n_{i-m}&\quad\text{if }i\geq m 
\end{aligned}
\right.
\]
and 
\[
\eps^{m+n}_i\ci\nu^{m,n}=\left\{
\begin{aligned} 
\nu^{m-1,n}\ci\eps^m_i\bfQ_n&\quad\text{if } i<m\\ \nu^{m,n-1}\ci\bfQ_m\eps^n_{i-m}&\quad\text{if }i\geq m \,.
\end{aligned}
\right.
\]
Multiplying the first with $\zeta^{m+n+1}$ and the second with $\zeta^{m+n-1}$ they can be easily verified using the defining
relations (\ref{def of delta^n_i}) and (\ref{def of eps^n_i}).
\end{proof}

\begin{thm} \label{thm: equivalence of lax comodule cats}
The functor $\hat{\phi}:\E^\bfQ\to\M^Q$ induced by the lax comonad morphism 
$\bra\phi,\zeta\ket:\bra\E,\bfQ\ket\to\bra \M,Q\ket$ 
of the above Proposition is an equivalence of categories.
\end{thm}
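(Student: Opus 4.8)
The plan is to show that $\hat\phi$ is faithful, full and surjective on objects; it will in fact turn out to be an isomorphism of categories. The one structural fact that makes this run is that the equalizer inclusions $\zeta^n$ of Proposition~\ref{pro: bfQ} are monic, so any equation between arrows landing in $\bfQ_n M$ may be checked after composing with $\zeta^n$, i.e.\ reduced to an equation in the ordinary comonad $Q$. Recall from Lemma~\ref{lem: lax-cmd one} and the paragraph after it that $\hat\phi=\LaxCmd(\bra 1,\one\ket,\bra\phi,\zeta\ket)$ takes a $\bfQ$-comodule $\bra M,\alpha\ket$ to the $Q$-comodule with underlying object $\phi M$ and coaction $\Delta:=\zeta^1_M\ci\phi\alpha_1$; the remaining components $\zeta^n_M\ci\phi\alpha_n$ are then automatically the iterated coaction $\Delta^{(n)}$ of this $Q$-comodule, since $Q$ is a genuine comonad.

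First I would note that $\F_z\hat\phi\bra M,\alpha\ket$ returns the $E$-object $M$ unchanged. Indeed $\alpha_1:M\to\bfQ_1 M$ is an arrow in $\E$, where $\bfQ_1 M$ carries the $E$-action $\lambda_2$; composing $\alpha_1\ci\lambda_M(r)=\lambda_2(r)\ci\alpha_1$ with the monic $\zeta^1_M$, which intertwines these actions, gives $\Delta\ci\lambda_M(r)=(\phi M\smp r)\ci\Delta$, and since $\hat\phi\bra M,\alpha\ket$ is a $Q$-comodule we also have $\eps_{\phi M}\ci\Delta=\id$; hence the action $r\mapsto\eps_{\phi M}\ci(\phi M\smp r)\ci\Delta$ that $\F_z$ puts on $\phi M$ equals $\lambda_M(r)$. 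Faithfulness of $\hat\phi$ is then immediate, as $\hat\phi$ acts as the faithful forgetful functor $\phi$ on underlying arrows. For fullness, an arrow $g:\hat\phi\bra M,\alpha\ket\to\hat\phi\bra N,\beta\ket$ of $\M^Q$ is a $Q$-comodule map, hence by Lemma~\ref{lem: forget to E-mod} an arrow of $E$-objects $\F_z\hat\phi\bra M,\alpha\ket\to\F_z\hat\phi\bra N,\beta\ket$, that is, an arrow $M\to N$ in $\E$; composing each square $\beta_n\ci g=\bfQ_n g\ci\alpha_n$ with the monic $\zeta^n$ and using naturality of $\zeta^n$, the identities $\zeta^n\ci\alpha_n=\Delta^{(n)}$, $\zeta^n\ci\beta_n=\Delta^{(n)}$, and that $g$ intertwines the iterated $Q$-coactions, one sees $g$ is a $\bfQ$-comodule map. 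So $\hat\phi$ is full.

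For surjectivity on objects, start from a $Q$-comodule $\bra L,\Delta_L\ket$ and equip $L$ with the $E$-structure $\F_z L$ of Lemma~\ref{lem: forget to E-mod}. By Lemma~\ref{lem: central coactions quotient actions} one has $\Delta_L=\zeta^1_{\F_z L}\ci\alpha_1$ with $\alpha_1:=\Delta^z_L:\F_z L\to\bfQ_1\F_z L$ an arrow in $\E$; this is the base case. Put $\alpha_0:=\id$ and, for $n\geq2$, $\alpha_n:=(\delta^{n-1}_0)_{\F_z L}\ci\alpha_{n-1}$, using the natural transformations $\delta^{n-1}_0:\bfQ_{n-1}\to\bfQ_n$ of Proposition~\ref{pro: bfQ} (arrows of $\End\E$, so each $\alpha_n$ is an arrow in $\E$). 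An induction from (\ref{def of delta^n_i}) and the identity $\Delta^{(n)}_L=\delta_{Q^{n-2}L}\ci\Delta^{(n-1)}_L$, which follows from coassociativity of the $Q$-coaction, gives $\zeta^n_{\F_z L}\ci\alpha_n=\Delta^{(n)}_L$ for all $n\geq0$. The $\bfQ$-comodule axioms for $\bra\F_z L,\alpha\ket$---the simplicial identities $\bfQ_f\ci\alpha_n=\alpha_m$, the monoidality $\alpha_{m+n}=\nu^{m,n}_{\F_z L}\ci\bfQ_m\alpha_n\ci\alpha_m$, and $\alpha_0=\iota$---then follow by composing with the monic $\zeta^\bullet$ and invoking (\ref{def of delta^n_i}), (\ref{def of eps^n_i}), (\ref{def of nu}) together with the standard relations of $\Delta^{(\bullet)}_L$ with the structure of $Q$ (in particular $\eps_L\ci\Delta_L=L$ from (\ref{coact2}) and $Q^m(\Delta^{(n)}_L)\ci\Delta^{(m)}_L=\Delta^{(m+n)}_L$). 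Since $\zeta^n_{\F_z L}\ci\alpha_n=\Delta^{(n)}_L$, we get $\hat\phi\bra\F_z L,\alpha\ket=\bra L,\Delta_L\ket$. Combined with the previous paragraph, $\hat\phi$ is an equivalence---indeed an isomorphism---of categories.

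The real obstacle is the middle part of the surjectivity argument: producing the whole tower $(\alpha_n)_{n\geq0}$ and confirming it is an honest $\bfQ$-comodule. The delicate point is the bookkeeping that the transformations $\delta^{n-1}_0$ splice the iterated coaction together correctly, i.e.\ that $\zeta^n\ci\alpha_n=\Delta^{(n)}_L$, and that then the full list of simplicial and monoidality constraints survives; but all of this is forced by monicity of the $\zeta^n$ as soon as the case $n=1$ is in hand, and that case is precisely Lemma~\ref{lem: central coactions quotient actions}. Thus the lax-comonadic machinery already assembled in Proposition~\ref{pro: bfQ} carries out the entire lifting with no new computation.
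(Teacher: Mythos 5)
Your proof is correct and follows essentially the same route as the paper: faithfulness from the faithful forgetful functor, fullness via Lemma \ref{lem: forget to E-mod} together with monicity of the $\zeta^n$, and surjectivity on objects via Lemma \ref{lem: central coactions quotient actions}. The only difference is that you spell out the construction of the full tower $(\alpha_n)$ from $\alpha_1=\Delta^z_L$ using the $\delta^{n-1}_0$ and verify the lax-comodule axioms explicitly, a step the paper compresses into a single appeal to that Lemma.
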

\begin{proof}
$\hat{\phi}$ is the lift of the faithful $\phi$ along the Eilenberg-Moore forgetful functors,
\[
\begin{CD}
\E^\bfQ@>\hat{\phi}>>\M^Q\\
@VVV @VVV\\
\E@>\phi>>\M
\end{CD}
\]
therefore it is faithful, too. For an arrow $t\in\M^Q(\hat{\phi}\bra M,\alpha\ket,\hat{\phi}\bra N,\beta\ket)$ we have
\[
\begin{CD}
\phi M@>\phi\alpha_n>>\phi\bfQ_n M@>\zeta^n_M>>Q^n\phi M\\
@V{t}VV @. @VV{Q^nt}V\\
\phi N@>\phi\beta_n>>\phi\bfQ_nN@>\zeta^n_N>>Q^n\phi N
\end{CD}
\]
therefore by Lemma \ref{lem: forget to E-mod} $t=\phi\tau$ for a unique $\tau\in\E(M,N)$. This allows to insert the arrow $\phi\bfQ_n\tau$ in the middle of the diagram so that the right square is commutative. But $\zeta^n_N$ being monic implies
commutativity of the left square, so $\tau$ lifts to an arrow in $\E^\bfQ(\bra M,\alpha\ket,\bra N,\beta\ket)$. This proves that $\hat{\phi}$ is full.
Finally we show that $\hat{\phi}$ is eso, in fact surjective on objects. Let $\bra M,\alpha\ket\in\M^Q$. Then by Lemma 
\ref{lem: central coactions quotient actions} there is an $\bra \hat{M},\hat{\alpha}\ket\in\E^\bfQ$ such that
\[
\left(M\longrarr{\phi\hat{\alpha}_n}\phi\bfQ_n \hat{M}\equalizer{\zeta^n_M}Q^nM\right)=\alpha_n\equiv Q^{n-1}\alpha\ci\dots Q\alpha\ci\alpha
\]
i.e., such that $\hat\phi\bra\hat M,\hat\alpha\ket=\bra M,\alpha\ket$. Thus $\hat\phi$ is eso.
\end{proof}

\begin{rmk}
There is a lift of the distributive law $\chi:TQ\to QT$ of Lemma  \ref{lem: T Q chi} to a \textit{lax distributive law}
$\psi_n: T_q\bfQ_n\to\bfQ_nT_q$ provided we consider $T$, $Q$, $T_q$ and $\bfQ_n$ as endofunctors on the category
$\M^{(2)}$ of (2,1)-type $E$-objects, which is the category of $R^\op\ot R$-$R^\op\ot R$-bimodules in case $\M=\Ab_R$. 
Of course, $\M$ has to have limits and colimits and $R\smp\under$ has to preserve coequalizers in order for $T_q$ to be a monad
and $\kappa$ a monad morphism. More precisely, $T_q$ on $\M^{(2)}$ is defined as $T_q$ on $\M^{(1)}=\E$ by considering
$M\in\M^{(2)}$ as a $(1,1)$-type $E$-object in $\E$ via $\rho_1$ and $\lambda_2$  and $\bfQ_n$ on $\M^{(2)}$ is defined
as $\bfQ_n$ on $\M^{(1)}$ by considering $M\in\M^{(2)}$ as a $(1,1)$-type $E$-object in $\E$ via $\lambda_1$ and $\rho_1$.
Let  $\chi^n:=Q^{n-1}\chi\ci \dots\ci Q\chi Q^{n-2}\ci\chi Q^{n-1}$ and consider the diagram
\[
\parbox{200pt}{
\begin{picture}(200, 150)(0,-10)
\put(50,0){$Q^nT$}
\put(75,2){\vector(1,0){40}} \put(87,8){$\ssst Q^n\kappa$}
\put(120,0){$Q^nT_q$}
   \put(10,50){\vector(1,-1){40}} \put(16,25){$\ssst \chi^n$}
   \dashline{3}(80,50)(120,10) \put(120,10){\vector(1,-1){0}} \put (102,33){$\ssst \vartheta^n$}
   \put(175,50){\vector(-1,-1){40}} \put(160,25){$\ssst\zeta^nT_q$}
\put(0,60){$TQ^n$}
\put(25,62){\vector(1,0){40}} \put(42,68){$\ssst \kappa Q^n$}
\put(70,60){$T_qQ^n$}
\put(170,60){$\bfQ_n T_q$}
   \put(55,112){\vector(-1,-1){40}} \put(20,95){$\ssst T\zeta^n$}
   \put(120,112){\vector(-1,-1){40}} \put(103,85){$\ssst T_q\zeta^n$}
   \dashline{3}(133,112)(173,72) \put(173,72){\vector(1,-1){0}} \put(157,93){$\ssst\psi^n$}
\put(50,120){$T\bfQ_n$}
\put(75,122){\vector(1,0){40}} \put(90,128){$\ssst \kappa \bfQ_n$} 
\put(120,120){$T_q\bfQ_n$}
\end{picture}
}
\]
in which $\kappa Q^n$ is a coequalizer which defines $\vartheta^n$ and $\zeta^n T_q$ is an equalizer which defines $\psi^n$.
Then one obtains the distributive laws
\begin{align*}
\bfQ_n\mu^q\ci\psi^n T_q\ci T_q\psi^n&=\psi^n\ci\mu^q\bfQ_n\\
\psi^n\ci\eta^q\bfQ_n&=\bfQ_n\eta^q\\
\psi^{n+1}\ci T_q\delta^n_i&=\delta^n_iT_q\ci\psi^n&\\
\psi^{n-1}\ci T_q\eps^n_i&=\eps^n_i T_q\ci\psi^n
\end{align*}
as a consequence of (\ref{SMC14}), (\ref{SMC16}), (\ref{SMC15}) and (\ref{SMC17}), respectively. 
While the last two express only naturality of $\psi$ the first two contain the monad data $\bra T_q,\mu^q,\eta^q\ket$. The difference disappears, however, if we introduce the lax monad $\bfT$ as a cosimplicial object $\Delta\to\End\M^{(2)}$ by
\begin{align*}
\bfT_m&:=T_q^m\,,\qquad m\geq 0\\
\bfT_{i+(2\to 1)+j}&:=T_q^i\mu^q T_q^j\,,\qquad i,j\geq 0\\
\bfT_{i+(1\to 0)+j}&:=T_q^i\eps^q T_q^j\,,\qquad i,j\geq 0\,.
\end{align*}
Then the lax distributive law becomes deceptively simple, just a natural transformation
\[
\bfT\bfQ\longrarr{\psi}\bfQ\bfT \ :\ \Delta\x\Delta^\op\rightarrow \End\M^{(2)}\,.
\]
Note that $\bfT\bfQ$ and $\bfQ\bfT$ are not the composite of two functors as in $TQ\rarr{\chi}QT$, rather the monoidal product on their common target category: $\Delta\x\Delta^\op\rarr{\bfT\x\bfQ}\End\M^{(2)}\x\End\M^{(2)}\to\End\M^{(2)}$.
All information on the compatibility of $\psi^{m,n}$ with $\mu^q$, $\eta^q$, $\delta^n_i$, $\eps^n_i$ seems to be comprised in 
the naturality of $\psi^{m,n}$ in $m\in\Delta$ and $n\in\Delta^\op$. However, $\psi^{m,n}$ also satisfies some `monoidality' relations in $m$ and $n$ separately which are automatic in this example and which ought to belong to the axioms 
of a lax distributive law for general lax monad $\bfT$ and lax comonad $\bfQ$. 
\end{rmk}

In the rest of the paper we study the problem of how and when (ordinary) monoidal structures on the category $\E$ of $E$-objects
will lead to monoidality of the Eilenberg-Moore categories $\E^\bfQ$ or $\E_{T_q}$ with a strong monoidal forgetful functor to $\E$.

\section{Bi(co)monad induced structures} \label{s: O-ind}

In Section \ref{s: bgd} we have seen how right $R$-bialgebroids induce right-monoidal structures on the category $\Ab_R$ of right $R$-modules. Since bialgebroids correspond to bimonads, i.e., opmonoidal monads, on $\E=\,_R\Ab_R$ \cite{Sz: EM}, it is natural
to look for generalizations that produce right-monoidal categories from bimonads. 

Let $\bra\E,\ot,R,\asso,\luni^{-1},\runi\ket$ be a monoidal category. Then a bimonad, more precisely a $\ot$-bimonad, 
$\bra O,\omega,\iota\ket$ 
consists of an endofunctor $O$ on $\E$ together with an opmonoidal structure $O^{M,N}:O(M\ot N)\to OM\ot ON$, $O^0:OR\to R$ and  natural transformations $\omega:OO\to O$ and $\iota:\E\to O$ satisfying the monad axioms 
(not involving the $\ot$-structure) and the opmonoidality axioms
\begin{align}
\label{opmon1}
\asso_{OL,OM,ON}\ci(OL\ot O^{M,N})\ci O^{L,M\ot N}&=(O^{L,M}\ot ON)\ci O^{L\ot M,N}\ci O\asso_{L,M,N}\\
\label{opmon2}
(O^0\ot ON)\ci O^{R,N}\ci O\luni^{-1}_N&=\luni^{-1}_{ON}\\\
\label{opmon3}
\runi_{OM}\ci(OM\ot O^0)\ci O^{M,R}&=O\runi_M\\
\label{opmon4}
(\omega_M\ot\omega_N)\ci O^{OM,ON}\ci OO^{M,N}&=O^{M,N}\ci\omega_{M\ot N}\\
\label{opmon5}
O^0\ci\omega_R&=O^0\ci OO^0\\
\label{opmon6}
O^{M,N}\ci\iota_{M\ot N}&=\iota_M\ot\iota_N\\
\label{opmon7}
O^0\ci\iota_R&=R\,.
\end{align}
We have written them using only $\asso$, $\luni^{-1}$ and $\runi$ but never their inverses. This admits to speak about
opmonoidal monads in right-monoidal categories. Such right-opmonoidal monads are not really new, they are just the monads in
the 2-category $\RightOpmonCat$. Indeed, relations (\ref{opmon1}-\ref{opmon3}) say exactly that $O$ is a 1-cell and 
relations (\ref{opmon4}-\ref{opmon7}) say that $\omega$ and $\iota$ are 2-cells of this 2-category.

The so-called fusion operator \cite{BLV} associated to a bimonad 
$\bra O,\omega,\iota\ket$ is the natural transformation
\begin{equation} \label{H(O)}
h_{M,N}:=(OM\ot\omega_N)\ci O^{M,ON}\ :\ O(M\ot ON)\to OM\ot ON\,.
\end{equation}
Given a fusion operator we can recover the opmonoidal structure by
\begin{equation} \label{O(H)}
O^{M,N}=h_{M,N}\ci O(M\ot\iota_N).
\end{equation}
The next result is essentially \cite[Proposition 2.6]{BLV} of Bruguieres, Lack and Virelizier although some of the output is turned into
input. But the main difference is the observation that the statement is valid also when $\ot$ is a skew-monoidal product.
\begin{pro} \label{pro: fusion}
Let $\bra\E,\ot,R,\asso,\luni^{-1},\runi\ket$ be a right-monoidal category and $\bra O,\omega,\iota\ket$ be a monad on $\E$.
Then opmonoidal structures on $O$, i.e., $O^{M,N}$, $O^0$ satisfying (\ref{opmon1}-\ref{opmon7}), are in bijection with
data consisting of a natural transformation $h_{M,N}:O(M\ot ON)\to OM\ot ON$ and the same $O^0$ satisfying
the following relations:
\begin{align}
\label{H0}
(OM\ot\omega_N)\ci h_{M,ON}&=h_{M,N}\ci O(M\ot\omega_N)\\
\label{H1}
(h_{L,M}\ot ON)\ci h_{L\ot OM,N}\ci O\asso_{L,OM,ON}&\ci O(L\ot h_{M,N})=\\
\notag
=\asso_{OL,OM,ON}&\ci(OL\ot h_{M,N})\ci h_{L,M\ot ON}\\
\label{H2}
h_{M,N}\ci\iota_{M\ot ON}&=\iota_M\ot ON\\
\label{H3}
(O^0\ot ON)\ci h_{R,N}\ci O\luni^{-1}_{ON}&=\luni^{-1}_{ON}\ci\omega_N\\
\label{H4}
\runi_{OM}\ci(OM\ot O^0)\ci h_{M,R}&=O\runi_{M}\ci O(M\ot O^0)\\
\label{H5}
(\omega_M\ot ON)\ci h_{OM,N}\ci Oh_{M,N}&=h_{M,N}\ci \omega_{M\ot ON}\\
\label{H6}
O^0\ci \iota_R&=R\,.
\end{align}
The bijection is given by equations (\ref{H(O)}) and (\ref{O(H)}).
\end{pro}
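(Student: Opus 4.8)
The claim is a bijection between opmonoidal structures $\bra O^{M,N},O^0\ket$ on the monad $O$ and data $\bra h_{M,N},O^0\ket$ satisfying (\ref{H0})--(\ref{H6}), with the correspondence given by the mutually inverse formulas (\ref{H(O)}) and (\ref{O(H)}). The plan is to verify that these two formulas are inverse bijections on the underlying natural transformations (keeping $O^0$ fixed throughout), and then to check that the axiom list (\ref{opmon1})--(\ref{opmon7}) is equivalent, under this change of variables, to the axiom list (\ref{H0})--(\ref{H6}).

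First I would establish that (\ref{H(O)}) and (\ref{O(H)}) are inverse to one another at the level of bare natural transformations, \emph{given only the monad axioms}, not the (op)monoidality axioms. Starting from an opmonoidal $O^{M,N}$, define $h$ by (\ref{H(O)}); substituting into (\ref{O(H)}) and using naturality of $O^{M,-}$ together with the monad unit law $\omega_N\ci O\iota_N=\id$ recovers $O^{M,N}$. Conversely, starting from $h$, define $O^{M,N}$ by (\ref{O(H)}); substituting into (\ref{H(O)}) and using naturality of $h$ in the second variable (which is exactly (\ref{H0})) together with $\omega\ci O\iota=\id$ recovers $h$. Note this direction already needs (\ref{H0}), so strictly the round-trip $h\mapsto O^{M,N}\mapsto h$ is an identity only on $h$ satisfying (\ref{H0}); I would remark that (\ref{H0}) is in turn forced by naturality of $O^{M,N}$ in $N$, so on the relevant domains the two constructions are genuinely inverse.

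Next, the bulk of the proof is the dictionary between the two axiom systems, which I would organize as a table of implications matched by the numbering: (\ref{opmon4}) $\leftrightarrow$ (\ref{H5}) and (\ref{H0}) — multiplicativity of $O^{M,N}$ with respect to $\omega$ unpacks, via (\ref{O(H)}), into the $h$-multiplicativity (\ref{H5}) plus the naturality relation (\ref{H0}); (\ref{opmon1}) $\leftrightarrow$ (\ref{H1}) — the coassociativity hexagon for $O^{M,N}$ becomes the hexagon (\ref{H1}) for $h$ after inserting $O(L\ot\iota)$, $O(M\ot\iota)$ in the appropriate places and cancelling with $\omega\ci O\iota=\id$; (\ref{opmon2}) $\leftrightarrow$ (\ref{H3}) and (\ref{opmon3}) $\leftrightarrow$ (\ref{H4}), the left/right unit compatibilities, are the most direct translations; (\ref{opmon6}) $\leftrightarrow$ (\ref{H2}), the $\iota$-comultiplicativity, becomes (\ref{H2}) simply because $O^{M,N}\ci\iota_{M\ot N}=h_{M,N}\ci O(M\ot\iota_N)\ci\iota_{M\ot N}=h_{M,N}\ci\iota_{M\ot ON}$ by naturality of $\iota$; and (\ref{opmon5}) $\leftrightarrow$ (\ref{H6}), (\ref{opmon7}) $\leftrightarrow$ (\ref{H6}) involve $O^0$ and $\iota_R$, $\omega_R$ and are short diagram chases. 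In each case I would write out only the nontrivial direction in detail and note that the reverse implication is obtained by reading the same string of equalities backwards, using that $O(M\ot\iota_N)$ has the retraction $O(M\ot\omega_N)\ci O(OM\ot O\iota_{N})$... more simply, by pre-composing (\ref{O(H)}) with $\iota$ where needed.

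The main obstacle is bookkeeping rather than conceptual: the fusion-operator axioms (\ref{H1}) and (\ref{H5}) are long composites, and one must be careful that the skew (non-invertible) nature of $\asso$, $\luni^{-1}$, $\runi$ is never used in an illegitimate way — i.e. that each step of every chase uses only naturality, the monad laws, and the stated axioms, and never an inverse of a structure cell. Concretely, the hardest single computation is showing (\ref{opmon1}) $\Rightarrow$ (\ref{H1}): one feeds three copies of the unit $\iota$ into the pentagon (\ref{opmon1}), moves them past $O^{L,M\ot N}$, $O^{L,M}$ and $O\asso$ using naturality, and collapses the resulting $\omega\ci O\iota$ factors, all while tracking that the $\asso$'s appear in exactly the pattern of (\ref{H1}); this is essentially the content of \cite[Proposition 2.6]{BLV} and I would cite that proof for the coherence of the manipulation, indicating only the points where the argument must be adapted because $\ot$ is merely right-monoidal. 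The remaining verifications are routine and I would leave them to the reader after indicating the pattern.
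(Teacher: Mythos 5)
Your overall strategy --- check that (\ref{H(O)}) and (\ref{O(H)}) are mutually inverse and translate the two axiom lists into one another --- is the same as the paper's, and it is the right architecture. However, several specific claims in your dictionary are incorrect and would block the computation if executed as written. First, (\ref{H0}) is \emph{not} naturality of $h$ in the second variable: naturality concerns arrows $OM\ot Of$ for $f:N\to N'$, whereas $\omega_N:O^2N\to ON$ is not of that form. It is an independent axiom, included precisely so that the round trip $h\mapsto O^{M,N}\mapsto h$ closes; in the forward direction it is derived from \emph{associativity of the monad multiplication} together with naturality of $O^{M,-}$, not from naturality alone. Second, (\ref{opmon5}) does not pair with (\ref{H6}) (the latter is literally (\ref{opmon7})); it must be derived from (\ref{H3}) and (\ref{H4}) together with the unit identity $\runi_R\ci\luni^{-1}_R=R$ of the $\ot$-structure, and this is a genuine multi-step chase, not a restatement.

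Third, your description of the ``hardest computation'' (\ref{opmon1})$\Rightarrow$(\ref{H1}) --- feeding copies of $\iota$ into the pentagon and cancelling $\omega\ci O\iota$ --- is in fact the mechanism of the \emph{converse} implication (\ref{H1})$\Rightarrow$(\ref{opmon1}). The forward direction instead expands each $h$ as $(O(\under)\ot\omega)\ci O^{\under,O(\under)}$, pushes the $\omega$'s around by naturality, and crucially invokes (\ref{opmon4}) in addition to (\ref{opmon1}); so the correspondence is not a clean one-axiom-to-one-axiom bijection in that direction. Relatedly, the claim that each reverse implication is obtained ``by reading the same string of equalities backwards'' fails: the two substitutions $h=(1\ot\omega)\ci O^{\under,O(\under)}$ and $O^{\under,\under}=h\ci O(1\ot\iota)$ lead to genuinely different chains (one cancels $\omega\ci O\iota=\id$ after inserting units, the other uses monad associativity and (\ref{opmon4})/(\ref{H5})), and each direction must be computed separately, as the paper does. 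Finally, deferring the coherence of the main chase to \cite{BLV} is not quite available here, since the whole point is to redo those manipulations without ever inverting $\asso$, $\luni^{-1}$ or $\runi$; the verifications have to be written out. None of this changes the overall plan, but the dictionary must be corrected before the verifications can go through.
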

\begin{proof}
Assume that an opmonoidal structure $O^{M,N}$, $O^0$ is given and $h$ is defined by (\ref{H(O)}). 
Then (\ref{H0}) can be shown using associativity of the monad multiplication $\omega$,
\begin{align*}
(OM\ot\omega_N)\ci h_{M,ON}&=(OM\ot\omega_N)\ci(OM\ot \omega_{ON})\ci O^{M,O^2N}=\\
&=(OM\ot\omega_N)\ci(OM\ot O\omega_N)\ci O^{M,O^2N}=\\
&=(OM\ot\omega_N)\ci O^{M,ON}\ci O(M\ot\omega_N)=h_{M,N}\ci O(M\ot\omega_N)\,.
\end{align*}
The proof of the associativity law (\ref{H1}) is a bit longer:
\begin{align*}
&(h_{L,M}\ot ON)\ci h_{L\ot  OM,N}\ci O\asso_{L,OM,ON}\ci O(L\ot h_{M,N})=\\
&=((OL\ot\omega_M)\ot  ON)\ci(O^{L,OM}\ot ON)\ci(O(L\ot OM)\ot\omega_N)\ci O^{L\ot OM,ON}\ci O\asso_{L,OM,ON}\\
&\qquad\ci O(L\ot(OM\ot\omega_N))\ci O(L\ot O^{M,ON})=\\
&=((OL\ot\omega_M)\ot\omega_N)\ci((OL\ot O^2M)\ot O\omega_N)\ci(O^{L,OM}\ot O^3N)\ci O^{L\ot OM,O^2N}\\
&\qquad\ci O\asso_{L,OM,O^2N}\ci O(L\ot O^{M,ON})=\\
&\eqby{opmon1}((OL\ot\omega_M)\ot\omega_N)\ci(((OL\ot O^2M)\ot O\omega_N)\ci\asso_{OL,O^2M,O^3N}\\
&\qquad\ci(OL\ot O^{OM,O^2N})\ci O^{L,OM\ot O^2N}\ci O(L\ot O^{M,ON})=\\
&=\asso_{OL,OM,ON}\ci(OL\ot(\omega_M\ot\omega_N))\ci(OL\ot(O^2M\ot O\omega_N))\\
&\qquad\ci (OL\ot O^{OM,O^2N})\ci (OL\ot OO^{M,ON})\ci O^{L,O(M\ot ON)}=\\
&=\asso_{OL,OM,ON}\ci(OL\ot(OM\ot\omega_N))\ci(OL\ot\left[(\omega_M\ot\omega_{ON})\ci O^{OM,O^2N}\ci OO^{M,ON}\right])\\
&\qquad\ci O^{L,O(M\ot ON)}=\\
&\eqby{opmon4}\asso_{OL,OM,ON}\ci(OL\ot h_{M,N})\ci h_{L,M\ot ON}\,.
\end{align*}
As for the remaining relations we proceed as follows:
\begin{align*}
h_{M,N}\ci\iota_{M\ot ON}&=(OM\ot\omega_N)\ci O^{M,ON}\ci\iota_{M\ot ON}=\\
&\eqby{opmon6}(OM\ot\omega_N)\ci(\iota_M\ot\iota_{ON})=\iota_M\ot ON\,,
\end{align*}
\begin{align*}
(O^0\ot ON)\ci h_{R,N}\ci O\luni^{-1}_{ON}&=(R\ot\omega_N)\ci(O^0\ot O^2N)\ci O^{R,ON}\ci O\luni^{-1}_{ON}=\\
&\eqby{opmon2}(R\ot\omega_N)\ci\luni^{-1}_{O^2N}=\luni^{-1}_{ON}\ci\omega_N\,,
\end{align*}
\begin{align*}
\runi_{OM}\ci(OM\ot O^0)\ci h_{M,R}&=\runi_{OM}\ci(OM\ot O^0)\ci (OM\ot\omega_R)\ci O^{M,OR}=\\
&\eqby{opmon5}\runi_{OM}\ci(OM\ot O^0)\ci (OM\ot OO^0)\ci O^{M,OR}=\\
&=\runi_{OM}\ci(OM\ot O^0)\ci O^{M,R}\ci O(M\ot O^0)=\\
&\eqby{opmon3}O\runi_{M}\ci O(M\ot O^0)
\end{align*}
and finally (\ref{H5}) follows from (\ref{opmon4}) easily.

Now assume that a fusion operator $h$ is given, together with $O^0$, and define $O^{M,N}$ by (\ref{O(H)}). 
First, (\ref{opmon6}) follows easily from (\ref{H2}). Then associativity relation (\ref{opmon1}) can be shown by means of 
(\ref{H1}) and (\ref{opmon6}):
\begin{align*}
&\asso_{OL,OM,ON}\ci(OL\ot O^{M,N})\ci O^{L,M\ot N}=\\
&=\asso_{OL,OM,ON}\ci(OL\ot h_{M,N})\ci(OL\ot O(M\ot \iota_N))\ci h_{L,M\ot N}\ci O(L\ot\iota_{M\ot N})=\\
&=\asso_{OL,OM,ON}\ci(OL\ot h_{M,N})\ci h_{L,M\ot ON}\ci O(L\ot O(M\ot\iota_N))\ci O(L\ot\iota_{M\ot N})=\\
&\eqby{H1}(h_{L,M}\ot ON)\ci h_{L\ot OM,N}\ci O\asso_{L,OM,ON}\ci O(L\ot h_{M,N})\ci O(L\ot O(M\ot\iota_N))
\ci O(L\ot\iota_{M\ot N})\\
&\eqby{opmon6}(h_{L,M}\ot ON)\ci h_{L\ot OM,N}\ci O\asso_{L,OM,ON}\ci O(L\ot (\iota_M\ot\iota_N))=\\
&=(h_{L,M}\ot ON)\ci(O(L\ot\iota_M)\ot ON)\ci h_{L\ot M,N}\ci O((L\ot M)\ot \iota_N)\ci O\asso_{L,M,N}=\\
&=(O^{L,M}\ot ON)\ci O^{L\ot M,N}\ci O\asso_{L,M,N}\,.
\end{align*}
Equation (\ref{opmon2}) is a simple consequence of (\ref{H3}) if we compose the latter with $O\iota_N$.
Similarly, (\ref{opmon3}) follows from (\ref{H4}) and (\ref{H6}). 
For proving (\ref{opmon4}) we need relation (\ref{H5}) and the calculation
\begin{align*}
&{}(\omega_M\ot\omega_N)\ci h_{OM,ON}\ci O(OM\ot\iota_{ON})\ci Oh_{M,N}\ci O^2(M\ot\iota_N)=\\
&=(\omega_M\ot ON)\ci h_{OM,N}\ci Oh_{M,N}\ci O^2(M\ot\iota_N)=\\
&\eqby{H5}h_{M,N}\ci \omega_{M\ot ON}\ci O^2(M\ot\iota_N)=\\
&=h_{M,N}\ci O(M\ot\iota_N)\ci\iota_{M\ot N}=O^{M,N}\ci \iota_{M\ot N}\,.
\end{align*}
Finally, (\ref{opmon5}) is the consequence of (\ref{H3}) and (\ref{H4}),
\begin{align*}
O^0\ci\omega_R&=\runi_R\ci\luni^{-1}_R\ci O^0\ci\omega_R=\runi_R\ci(R\ot O^0)\ci\luni^{-1}_{OR}\ci\omega_R=\\
&\eqby{H3} \runi_R\ci(O^0\ot O^0)\ci h_{R,R}\ci O\luni^{-1}_{OR}=O^0\ci\runi_{OR}\ci (OR\ot O^0)\ci h_{R,R}\ci
O\luni^{-1}_{OR}=\\
&\eqby{H4}O^0\ci O\runi_R\ci O(R\ot O^0)\ci O\luni^{-1}_{OR}= O^0\ci O\runi_R\ci O\luni^{-1}_{OR}\ci OO^0=\\
&=O^0\ci OO^0\,.
\end{align*}
This finishes the proof that $O$ is opmonoidal.

It remains to verify that (\ref{H(O)}) and (\ref{O(H)}) define a bijection between fusion operators and opmonoidal structures.
While the composite mapping $O^{M,N}\mapsto h_{M,N}\mapsto O^{M,N}$ is the identity for whatever $O^{M,N}$, the composite
$h_{M,N}\mapsto O^{M,N}\mapsto h_{M,N}$ becomes the identity after using (\ref{H0}).
\end{proof}

\begin{pro} \label{pro: O-ind}
Let $\bra O,\omega,\iota\ket$ be a bimonad on the (right-)monoidal category $\bra\E,\ot,R,\asso,\luni^{-1},\runi\ket$. 
Then there is a right-monoidal structure on $\E$ given by 
\begin{align}
M\odot N&:=M\ot ON\\
\label{dotgamma}
\dot\gamma_{L,M,N}&:=\asso_{L,OM,ON}\ci(L\ot(OM\ot\omega_N))\ci(L\ot O^{M,ON})\\
\label{doteta}
\dot\eta_M&:=\luni^{-1}_{OM}\ci\iota_M\\
\label{doteps}
\dot\eps_M&:=\runi_M\ci(M\ot O^0)\,.
\end{align}
The unit $\luni^{-1}$ of the $\ot$-structure gives rise to a monad morphism $\luni^{-1}_{ON}:ON\to \dot TN$ from $O$ to the canonical monad $\dot T=R\odot\under$ of the $\odot$-structure.
\end{pro}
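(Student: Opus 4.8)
The plan is to reduce every computation to the \emph{fusion operator} of Proposition~\ref{pro: fusion}: the opmonoidal structure of the bimonad $\bra O,\omega,\iota\ket$ is encoded by
\[
h_{M,N}\ =\ (OM\ot\omega_N)\ci O^{M,ON}\ :\ O(M\ot ON)\to OM\ot ON,
\]
which satisfies (\ref{H0})--(\ref{H6}), and with this notation the defining formula (\ref{dotgamma}) reads simply $\dot\gamma_{L,M,N}=\asso_{L,OM,ON}\ci(L\ot h_{M,N})$. The proof then has three stages: the unit-and-counit axioms (\ref{SMC2})--(\ref{SMC5}) for $\bra\E,\odot,R,\dot\gamma,\dot\eta,\dot\eps\ket$, the pentagon (\ref{SMC1}), and the monad morphism claim. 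The whole thing is the skew-monoidal refinement of the fact, contained essentially in \cite[Proposition~2.6]{BLV}, that a $\ot$-bimonad makes $\E$ monoidal; the point is that the computations never invoke invertibility of $\asso$, $\luni^{-1}$ or $\runi$.

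The unit-and-counit axioms (\ref{SMC2})--(\ref{SMC5}) are the routine part, and I would dispatch them first. Each is proved by unfolding the definitions (\ref{dotgamma})--(\ref{doteps}), sliding the occurrences of $O^0$ and $\iota$ past the associator and unitors of $\ot$ by naturality, applying the matching fusion relation --- (\ref{H2}) for (\ref{SMC2}), (\ref{H4}) for (\ref{SMC3}), (\ref{H3}) together with the monad unit law $\omega\ci O\iota=\id$ for (\ref{SMC4}), (\ref{H6}) for (\ref{SMC5}) --- and then invoking the corresponding axiom (\ref{SMC2})--(\ref{SMC5}) for the ambient right-monoidal structure $\bra\E,\ot,R,\asso,\luni^{-1},\runi\ket$ on the objects obtained by applying $O$. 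For example (\ref{SMC4}) for $\odot$ collapses, after pulling $M\ot O^0$ across $\asso$ and using (\ref{H3}) and $\omega\ci O\iota=\id$, to the instance of (\ref{SMC4}) for $\ot$ on the objects $M$ and $ON$, which is $M\ot ON=M\odot N$.

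The hard part will be the pentagon (\ref{SMC1}). I would expand both sides using $\dot\gamma_{L,M,N}=\asso_{L,OM,ON}\ci(L\ot h_{M,N})$ and, after one naturality move for $\asso$, recognise the factor $K\ot\big[(h_{L,M}\ot ON)\ci h_{L\ot OM,N}\ci O\asso_{L,OM,ON}\ci O(L\ot h_{M,N})\big]$ appearing on the left as the left-hand side of the fusion associativity (\ref{H1}). After substituting (\ref{H1}) the three remaining associators $(\asso_{K,OL,OM}\ot ON)\ci\asso_{K,OL\ot OM,ON}\ci(K\ot\asso_{OL,OM,ON})$ form exactly the left-hand side of the $\ot$-pentagon (\ref{SMC1}) on the objects $K,OL,OM,ON$; applying it and then one final naturality move for $\asso$ yields precisely $\dot\gamma_{K\odot L,M,N}\ci\dot\gamma_{K,L,M\odot N}$, the right-hand side. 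I expect the only genuine difficulty here to be bookkeeping --- tracking the parenthesizations and which instance of $h$ or $\asso$ occurs where; no ingredient beyond (\ref{H1}), the $\ot$-pentagon and naturality of $\asso$ is needed, and in particular neither (\ref{H0}) nor (\ref{H5}) enters.

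Finally, for the monad morphism $\varphi_N:=\luni^{-1}_{ON}:ON\to\dot TN=R\odot N$: naturality of $\varphi$ in $N$ is just naturality of $\luni^{-1}$, and the unit condition $\varphi_N\ci\iota_N=\dot\eta_N$ is the definition (\ref{doteta}). For the multiplicativity identity $\varphi\ci\omega=\dot\mu\ci\varphi\dot T\ci O\varphi$ I would first compute, from Lemma~\ref{lem: T Q chi} and one naturality move for $\asso$,
\[
\dot\mu_N\ =\ (\runi_R\ot ON)\ci\asso_{R,R,ON}\ci\big(R\ot((O^0\ot ON)\ci h_{R,N})\big);
\]
then use $\varphi\dot T\ci O\varphi=\luni^{-1}_{O(R\ot ON)}\ci O\luni^{-1}_{ON}$ and slide $\luni^{-1}$ past the last factor of $\dot\mu_N$ by naturality, which brings the $O$-part of the composite to $(O^0\ot ON)\ci h_{R,N}\ci O\luni^{-1}_{ON}$; relation (\ref{H3}) rewrites this as $\luni^{-1}_{ON}\ci\omega_N$, and $(\runi_R\ot ON)\ci\asso_{R,R,ON}\ci\luni^{-1}_{R\ot ON}$ reduces to the identity by (\ref{SMC2}) and (\ref{SMC5}) for $\ot$. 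What remains is $\luni^{-1}_{ON}\ci\omega_N=\varphi_N\ci\omega_N$, as required. So the whole proof is a chain of naturality moves interleaved with the fusion relations of Proposition~\ref{pro: fusion} and the five axioms of the ambient right-monoidal structure, with the pentagon (\ref{SMC1}) the only genuinely delicate step.
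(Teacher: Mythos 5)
Your proposal is correct and follows essentially the same route as the paper: both reduce everything to the fusion operator $h_{M,N}=(OM\ot\omega_N)\ci O^{M,ON}$ of Proposition \ref{pro: fusion}, prove the pentagon from (\ref{H1}) together with the $\ot$-pentagon and naturality of $\asso$, match (\ref{SMC2})--(\ref{SMC5}) to (\ref{H2}), (\ref{H4}), (\ref{H3}) (plus the monad unit law) and (\ref{H6}) exactly as you describe, and verify the monad morphism via (\ref{H3}). The only cosmetic difference is that in the last step you trivialize $(\runi_R\ot ON)\ci\asso_{R,R,ON}\ci\luni^{-1}_{R\ot ON}$ using the $\ot$-versions of (\ref{SMC2}) and (\ref{SMC5}), where the paper uses the $\ot$-version of (\ref{SMC4}) instead.
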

\begin{proof}
By Proposition \ref{pro: fusion} the monad $O$ is supplied with a fusion operator $h$. Since the associator $\dot\gamma$ is
essentially given by the fusion operator, the pentagon equation (\ref{SMC1}) for the $\odot$ product is a consequence of (\ref{H1}) and of the pentagon equation for $\ot$,
\begin{align*}
&(\dot\gamma_{K,L,M}\odot N)\ci\dot\gamma_{K,L\odot M,N}\ci(K\odot\dot\gamma_{L,M,N})=\\
&=(\asso_{K,OL,OM}\ot ON)\ci ((K\ot h_{L,M})\ot ON)\ci\asso_{K,O(L\ot OM),ON}\ci(K\ot h_{L\ot OM,N})\\
&\qquad\ci(K\ot O\asso_{L,OM,ON})\ci(K\ot O(L\ot h_{M,N}))=\\
&=(\asso_{K,OL,OM}\ot ON)\ci\asso_{K,OL\ot OM,ON}\\
&\qquad\ci\left(K\ot\left[(h_{L,M}\ot ON)\ci h_{L\ot OM,N}\ci O\asso_{L,OM,ON}\ci O(L\ot h_{M,N})\right]\right)=\\
&\eqby{H1}(\asso_{K,OL,OM}\ot ON)\ci\asso_{K,OL\ot OM,ON}\ci(K\ot\asso_{OL,OM,ON})\\
&\qquad\ci(K\ot(OL\ot h_{M,N}))\ci(K\ot h_{L,M\ot ON})=\\
&=\asso_{K\ot OL,OM,ON}\ci\asso_{K,OL,OM\ot ON}\ci(K\ot(OL\ot h_{M,N}))\ci(K\ot h_{L,M\ot ON})=\\
&=\dot\gamma_{K\odot L,M,N}\ci\dot\gamma_{K,L,M\odot N}\,.
\end{align*}
The unit-triangle (\ref{SMC2}) for $\odot$ follows from (\ref{H2}) and from the unit triangle for $\ot$,
\begin{align*}
\dot\gamma_{R,M,N}\ci\dot\eta_{M\odot N}&=\asso_{R,OM,ON}\ci(R\ot h_{M,N})\ci(R\ot\iota_{M\ot ON})\ci\luni^{-1}_{M\ot ON}=\\
&\eqby{H2}\asso_{R,OM,ON}\ci(R\ot(\iota_M\ot ON))\ci\luni^{-1}_{M\ot ON}=\\
&=((R\ot\iota_M)\ot ON)\ci(\luni^{-1}_M\ot ON)=\dot\eta_M\odot N\,.
\end{align*}
The counit-triangle (\ref{SMC3}) for $\odot$ follows from the counit triangle for $\ot$ and from (\ref{H4}),
\begin{align*}
\eps_{M\odot N}\ci\dot\gamma_{M,N,R}&=\runi_{M\ot ON}\ci((M\ot ON)\ot O^0)\ci\asso_{M,ON,OR}\ci(M\ot h_{N,R})=\\
&=(M\ot\runi_{ON})\ci(M\ot (ON\ot O^0))\ci(M\ot h_{N,R})=\\
&\eqby{H4}(M\ot O\runi_N)\ci (M\ot O(N\ot O^0))=M\odot\dot\eps_N\,.
\end{align*}
The mixed triangle (\ref{SMC4}) can be shown using (\ref{H3}) and then the analogous triangle for $\ot$:
\begin{align*}
&(\dot\eps_M\odot N)\ci\dot\gamma_{M,R,N}\ci(M\odot\dot\eta_N)=\\
&=((\runi_M\ci (M\ot O^0))\ot ON)\ci\asso_{M,OR,ON}\ci(M\ot h_{R,N})\ci(M\ot O\luni^{-1}_{ON}\ci O\iota_N)=\\
&=(\runi_M\ot ON)\ci \asso_{M,R,ON}\ci(M\ot (O^0\ot ON))\ci(M\ot h_{R,N})\ci(M\ot O\luni^{-1}_{ON})\ci(M\ot O\iota_N)=\\
&\eqby{H3}(\runi_M\ot ON)\ci \asso_{M,R,ON}\ci (M\ot\luni^{-1}_{ON})\ci(M\ot\omega_N)\ci(M\ot O\iota_N)=\\
&=(\runi_M\ot ON)\ci \asso_{M,R,ON}\ci (M\ot\luni^{-1}_{ON})=M\odot N\,.
\end{align*}
Finally, (\ref{SMC5}) for $\odot$ follows from (\ref{opmon7}) and from the analogous axiom for $\ot$,
\[
\dot\eps_R\ci\dot\eta_R=\runi_R\ci(R\ot O^0)\ci(R\ot\iota_R)\ci\luni^{-1}_R=\runi_R\ci\luni^{-1}_R=R\,.
\]
This finishes the proof that $\odot$ is a right-monoidal structure. The natural transformation $\luni^{-1}_{ON}$
 (together with the identity functor on $\E$) is a monad morphism $O\to\dot T$ if it satisfies the following two
conditions:
\begin{align} 
\label{monmor1}
\dot\mu_N\ci\luni^{-1}_{O\dot TN}\ci O\luni^{-1}_{ON}&=\luni^{-1}_{ON}\ci\omega_N\\
\label{monmor2}
\dot\eta_M&:=\luni^{-1}_{OM}\ci\iota_M\,.
\end{align}
The LHS of the first can be written as
\begin{align*}
&(\dot\eps_R\ot ON)\ci\dot\gamma_{R,R,N}\ci\luni^{-1}_{O(R\ot ON)}\ci O\luni^{-1}_{ON}=\\
&=(\runi_R\ot ON)\ci((R\ot O^0)\ot ON)\ci\asso_{R,OR,ON}\ci (R\ot h_{R,N})\ci(R\ot O\luni^{-1}_{ON})\ci\luni^{-1}_{O^2N}=\\
&\eqby{H3}(\runi_R\ot ON)\ci\asso_{R,R,ON}\ci(R\ot\luni^{-1}_{ON})\ci(R\ot\omega_N)\ci\luni^{-1}_{O^2N}=\\
&=\luni^{-1}_{ON}\ci\omega_N
\end{align*}
which is the RHS. The second condition is just the definition (\ref{doteta}) of $\dot\eta$, 
so $\luni^{-1}_{ON}$ is a monad morphism as claimed.
\end{proof}

\begin{defi} \label{def: repr}
The right-monoidal structures twist isomorphic (see Definition \ref{def: twist}) to ones arising from a bimonad w.r.t. some ordinary monoidal structure $\ot$ as in Proposition \ref{pro: O-ind} are called $\ot$-representable or representable by a $\ot$-bimonad.
\end{defi}

Passing to the reversed right-monoidal structures one obtains the notion of representability of left-monoidal categories by 
opmonoidal monads. Up to twist isomorphism they are given by
\begin{align*}
M\odot N&:=OM\ot N\\
\dot\gamma_{L,M,N}&:=\asso^{-1}_{OL,OM,N}\ci((\omega_L\ot OM)\ot N)\ci(O^{OL,M} \ot N)\\
\dot\eta_M&:=\runi^{-1}_{OM}\ci\iota_M\\
\dot\eps_M&:=\luni_M\ci(O^0\ot M)\,.
\end{align*}

Passing to the opposite category  opmonoidal monads become monoidal comonads and we obtain the notion of \textit{corepresentability}.
\begin{defi} \label{def: corep}
A right-monoidal category $\bra \M,\smp,R,\gamma,\eta,\eps\ket$ is corepresentable by a monoidal comonad 
$\bra C,C_2,C_0,\Delta,\epsilon\ket$ in a (left-) monoidal structure $\bra \M,\ot,R,\asso^{-1},\runi^{-1},\luni\ket$ when it is twist-isomorphic to the following right-monoidal structure:
\begin{align*}
M\odot N&:=N\ot CM\\
\dot\gamma_{L,M,N}&:=(N\ot C_{M,CL})\ci(N\ot(CM\ot\Delta_L))\ci\asso^{-1}_{N,CM,CL}\\
\dot\eta_M&:=(M\ot C_0)\ci\runi^{-1}_{M}\\
\dot\eps_M&:=\epsilon_M\ci\luni_{CM}\,.
\end{align*}
\end{defi}
It is left to the reader to write up what corepresentability means for left-monoidal categories.

\section{The representability theorem} \label{s: rep}

We wish to study the situation of a category $\E$ endowed with two right-monoidal structures $\bra\E,\smp,R,\gamma,\eta,\eps\ket$
and $\bra\E,\ot,R,\asso,\luni^{-1},\runi\ket$ with a common unit object $R$.
Later the second structure will be assumed to be an ordinary monoidal structure, this explains the notation, but for a good while
the unit $\luni^{-1}_M:M\to R\ot M$ is not assumed to be invertible, neither are $\asso_{L,M,N}$ and $\runi_M$.
We  shall briefly refer to them as the $\smp$-structure and the $\ot$-structure.

In order to relate this situation to that of earlier sections one may think $\E$ as the category of left $E$-objects in $\Ab_R$, i.e.,
$\E$ is the bimodule category $_R\Ab_R$ with $\ot$ the tensor product $\oR$. Then $\smp$ is the quotient $\smpq$ of a
right-monoidal structure on $\Ab_R$ as it was described in Proposition \ref{pro: gamma+}.

\begin{defi}
A tetrahedral homomorphism from the $\smp$-structure to the $\ot$-structure is a natural transformation
\[
\tet_{L,M,N}: \ L\ot (M\smp N)\ \to\ (L\ot M)\smp N
\]
satisfying the following axioms:
\begin{align}
\label{P*}
(\asso_{K,L,M}\smp N)\ci\tet_{K,L\ot M,N}\ci(K\ot\tet_{L,M,N})&=\tet_{K\ot L,M,N}\ci\asso_{K,L,M\smp N}\\
\label{P**}
(\tet_{K,L,M}\smp N)\ci\tet_{K,L\smp M,N}\ci(K\ot\gamma_{L,M,N})&=
\gamma_{K\ot L,M,N}\ci \tet_{K,L,M\smp N}\\
\label{tet-unit}
\tet_{R,M,N}\ci\luni^{-1}_{M\smp N}&=\luni^{-1}_M\smp N\\
\label{tet-counit}
\eps_{M\ot N}\ci\tet_{M,N,R}&=M\ot\eps_N\,.
\end{align}
A tetrahedral isomorphism is a tetrahedral homomorphism $\tet$ for which 
\begin{equation} \label{w(tet)}
w_{M,N}:=(\runi_M\smp N)\ci\tet_{M,R,N}\ :\ M\ot TN\to M\smp N
\end{equation}
is a natural isomorphism where $T=R\smp\under$.
\end{defi}

Axioms (\ref{P*}) and (\ref{P**}) are pentagons on the string of symbols $K\ot L\ot M\smp N$ and $K\ot L\smp M\smp N$,
respectively. Axioms (\ref{tet-unit}) and (\ref{tet-counit}) are analogous to the unit and counit axioms (\ref{SMC2}) and (\ref{SMC3}). The analogue of (\ref{SMC4}) is void since we have no distinguished arrow $M\ot N\to M\smp N$
to put on the right hand side, except the one on the left hand side.

The above axioms for $t$ can be recognized to be a fragment of the Cockett-Seely axioms for `linearly distributive categories' \cite{Cockett-Seely} although we do not assume either $\smp$ or $\ot$ to be monoidal structures. Our terminology 'tetrahedral" refers to the early 90s when A. Ocneanu used a tetrahedral calculus to formulate his `double-triangle algebras' 
\cite{Ocneanu, Petkova-Zuber}.

\begin{lem} \label{lem: tet --> w}
For $\tet$ a tetrahedral isomorphism from a $\smp$-structure to a $\ot$-structure we have the following results.
\begin{gather}
\label{w_R,N}
w_{R,N}\ci\luni^{-1}_{TN}=TN\\
\label{tet(w)}
\tet_{L,M,N}=w_{L\ot M,N}\ci\asso_{L,M,TN}\ci(L\ot w^{-1}_{M,N})\\
\label{P** spec}
(w_{L,M}\smp N)\ci w_{L\ot TM,N}\ci\asso_{L,TM,TN}\ci(L\ot w^{-1}_{TM,N})
\ci(L\ot \gamma_{R,M,N})=\gamma_{L,M,N}\ci w_{L,M\smp N}\\
\label{tet-counit spec}
\eps_M\ci w_{M,R}=\runi_M\ci(M\ot\eps_R)\,.
\end{gather}
\end{lem}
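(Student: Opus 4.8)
The plan is to derive all four identities by unfolding the definition $w_{M,N}=(\runi_M\smp N)\ci\tet_{M,R,N}$ and feeding the result into one of the tetrahedral axioms together with naturality and one axiom of the two right-monoidal structures; note that only $w$ is ever inverted, and only where it is already inverted in the statement, so invertibility of $\asso$, $\runi$, $\luni^{-1}$ is never used. The two short cases come first. For (\ref{w_R,N}): $w_{R,N}\ci\luni^{-1}_{TN}=(\runi_R\smp N)\ci\tet_{R,R,N}\ci\luni^{-1}_{R\smp N}$, and by (\ref{tet-unit}) with $M=R$ this equals $(\runi_R\ci\luni^{-1}_R)\smp N$, which is $\id_{TN}$ because $\runi_R\ci\luni^{-1}_R=\id_R$ is axiom (\ref{SMC5}) for the $\ot$-structure (in which $\runi$ and $\luni^{-1}$ play the roles of $\eps$ and $\eta$). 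For (\ref{tet-counit spec}): expand $w_{M,R}$, push $\eps_M$ past $\runi_M\smp R$ by naturality of $\eps$ to get $\runi_M\ci\eps_{M\ot R}\ci\tet_{M,R,R}$, and finish with the counit axiom (\ref{tet-counit}) at $N=R$, which gives $\eps_{M\ot R}\ci\tet_{M,R,R}=M\ot\eps_R$.

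The main work is (\ref{tet(w)}), which I would deduce from the auxiliary identity
\[
\tet_{L,M,N}\ci(L\ot w_{M,N})\ =\ w_{L\ot M,N}\ci\asso_{L,M,TN}
\]
by post-composing with $L\ot w^{-1}_{M,N}$. To prove the auxiliary identity, expand $w$ on both sides. On the left, naturality of $\tet$ in its middle variable along $\runi_M$ turns $\tet_{L,M,N}\ci(L\ot w_{M,N})$ into $((L\ot\runi_M)\smp N)\ci\tet_{L,M\ot R,N}\ci(L\ot\tet_{M,R,N})$. On the right, the pentagon (\ref{P*}) with $(K,L,M,N)$ instantiated as $(L,M,R,N)$ rewrites $\tet_{L\ot M,R,N}\ci\asso_{L,M,R\smp N}$ as $(\asso_{L,M,R}\smp N)\ci\tet_{L,M\ot R,N}\ci(L\ot\tet_{M,R,N})$, so the right side becomes $((\runi_{L\ot M}\ci\asso_{L,M,R})\smp N)\ci\tet_{L,M\ot R,N}\ci(L\ot\tet_{M,R,N})$. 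The two expressions agree because $L\ot\runi_M=\runi_{L\ot M}\ci\asso_{L,M,R}$ is the counit triangle (\ref{SMC3}) for the $\ot$-structure. The care-intensive point here is choosing exactly this instance of (\ref{P*}) and recognizing the leftover comparison cell as a $\ot$-triangle; everything else is bookkeeping.

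Finally, for (\ref{P** spec}) I would use (\ref{tet(w)}) with $M$ replaced by $TM$ to identify the composite $w_{L\ot TM,N}\ci\asso_{L,TM,TN}\ci(L\ot w^{-1}_{TM,N})$ with $\tet_{L,TM,N}$, so the left side becomes $(w_{L,M}\smp N)\ci\tet_{L,TM,N}\ci(L\ot\gamma_{R,M,N})$. Expanding $w_{L,M}\smp N=((\runi_L\smp M)\smp N)\ci(\tet_{L,R,M}\smp N)$ and applying the second pentagon (\ref{P**}) with $(K,L)$ instantiated as $(L,R)$ (so that $L\smp M$ there is $TM$) collapses $(\tet_{L,R,M}\smp N)\ci\tet_{L,TM,N}\ci(L\ot\gamma_{R,M,N})$ to $\gamma_{L\ot R,M,N}\ci\tet_{L,R,M\smp N}$. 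One last step — naturality of $\gamma$ in its first argument along $\runi_L$, followed by the definition $w_{L,M\smp N}=(\runi_L\smp(M\smp N))\ci\tet_{L,R,M\smp N}$ — produces $\gamma_{L,M,N}\ci w_{L,M\smp N}$, completing the proof.
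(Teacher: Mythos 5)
Your proof is correct and follows essentially the same route as the paper: each identity is obtained from the corresponding tetrahedral axiom instantiated at $R$ in the appropriate slot — (\ref{tet-unit}) at $M=R$ for (\ref{w_R,N}), the pentagon (\ref{P*}) at $(L,M,R,N)$ plus the $\ot$-triangle (\ref{SMC3}) for (\ref{tet(w)}), the pentagon (\ref{P**}) at $(L,R,M,N)$ plus naturality of $\gamma$ for (\ref{P** spec}), and (\ref{tet-counit}) at $N=R$ plus naturality of $\eps$ for (\ref{tet-counit spec}). The observation that invertibility of $\asso$, $\luni^{-1}$, $\runi$ is never needed is a correct and worthwhile aside.
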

\begin{proof}
Setting $M=R$ in (\ref{tet-unit}) and multiplying it with $\runi_R\smp N$ we obtain 
$w_{R,N}\ci \luni^{-1}_{TN}=(\runi_R\smp N)\ci(\luni^{-1}_R\smp N)$ the RHS of which is the identity by axiom (\ref{SMC5}) for the $\ot$-structure. This proves (\ref{w_R,N}).

Set $(K,L,M,N)=(L,M,R,N)$ in the pentagon (\ref{P*}), multiply it with $\runi_{L\ot M}\smp N$ and use (\ref{SMC3}) for
the $\ot$. Then we obtain
\[
((L\ot\runi_M)\smp N)\ci \tet_{L,M\ot R,N}\ci(L\ot t_{M,R,N})=w_{L\ot M,N}\ci\asso_{L,M,TN}\,.
\]
Using naturality of $\tet$ the LHS becomes $\tet_{L,M,N}\ci (L\ot w_{M,N})$ from which (\ref{tet(w)}) follows immediately.

Setting $(K,L,M,N)=(L,R,M,N)$ in (\ref{P**}) and then multiplying it with $(\runi_L\smp M)\smp N$ we obtain
\[
(w_{L,M}\smp N)\ci \tet_{L,TM,N}\ci(L\ot\gamma_{R,M,N}) =\gamma_{L,M,N}\ci w_{L,M\smp N}\,.
\]
Inserting here the expression (\ref{tet(w)}) we obtain the heptagon (\ref{P** spec}).

Setting $N=R$ in (\ref{tet-counit}), multiplying it with $\runi_M$ and then using naturality of $\eps$ on the LHS leads
to (\ref{tet-counit spec}).
\end{proof}

\begin{pro} \label{pro: tet-w}
Given right-monoidal structures $\ot$ and $\smp$ on the same category and with same unit object $R$
equations (\ref{w(tet)}) and (\ref{tet(w)}) provide a bijection between 
\begin{trivlist}
\item tetrahedral isomorphisms $\tet_{L,M,N}:L\ot(M\smp N)\to(L\ot M)\smp N$ 
\item and natural isomorphisms $w_{M,N}:M\ot TN\iso M\smp N$ 
satisfying (\ref{P** spec}) and (\ref{tet-counit spec}).
\end{trivlist}
\end{pro}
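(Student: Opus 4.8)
The plan is as follows. The assignment $\tet\mapsto w$ of (\ref{w(tet)}) is already handled by Lemma \ref{lem: tet --> w}: for a tetrahedral isomorphism $\tet$ the natural isomorphism $w$ it produces satisfies (\ref{P** spec}) and (\ref{tet-counit spec}), and equation (\ref{tet(w)}) of that Lemma states precisely that feeding this $w$ back through (\ref{tet(w)}) returns $\tet$; hence $\tet\mapsto w\mapsto\tet$ is the identity. For the converse, let $w_{M,N}:M\ot TN\iso M\smp N$ be a natural isomorphism satisfying (\ref{P** spec}) and (\ref{tet-counit spec}), and define $\tet$ by (\ref{tet(w)}). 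I would first check that $\tet$ is a tetrahedral homomorphism. The three ``unary'' axioms come out by substituting (\ref{tet(w)}) and transporting the structure morphisms past $w$: for (\ref{P*}) the inner $w^{-1}\ci w$ cancels and one is left with naturality of $w$ and of $\asso$ together with the pentagon (\ref{SMC1}) for $\ot$; for (\ref{tet-unit}) one commutes $\luni^{-1}$ successively past $w^{-1}$, past $\asso$ (using (\ref{SMC2}) for $\ot$) and past $w$, after which the surviving $w\ci w^{-1}$ collapses everything to $\luni^{-1}_M\smp N$; for (\ref{tet-counit}) one uses (\ref{tet-counit spec}) at $M\ot N$ to strip off the outer $w$, moves $\eps_R$ inward by naturality of $\asso$ and (\ref{SMC3}) for $\ot$, and applies (\ref{tet-counit spec}) at $N$ in reverse.

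The substantive axiom is the pentagon (\ref{P**}). I would substitute (\ref{tet(w)}) into each of its four $\tet$'s, rewrite the right-hand side by means of (\ref{P** spec}) with $L$ replaced by $K\ot L$, cancel the resulting common outermost factor $w_{K\ot L,M}\smp N$ (legitimate since $w$ is invertible), and reduce the remaining identity among instances of $w$, $w^{-1}$, $\asso$ and $\gamma_{R,M,N}$ to a triviality by repeated naturality together with the already-proved (\ref{P*}) and the pentagon (\ref{SMC1}) for $\ot$. I expect this diagram chase to be the main obstacle: it is the exact reverse of the specialisation that produced (\ref{P** spec}) from (\ref{P**}) in Lemma \ref{lem: tet --> w}, so one has to ``re-promote'' the unit slot of the heptagon back to a general object, and keeping track of the bracketings while doing so is where the bookkeeping bites.

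Finally I must show that $w\mapsto\tet\mapsto w$ is the identity, i.e.\ that $(\runi_M\smp N)\ci\tet_{M,R,N}=w_{M,N}$. Substituting (\ref{tet(w)}) and pulling $\runi_M$ through $w$ by naturality reduces this to $(\runi_M\ot TN)\ci\asso_{M,R,TN}\ci(M\ot w^{-1}_{R,N})=M\ot TN$, which holds once $w^{-1}_{R,N}=\luni^{-1}_{TN}$, by (\ref{SMC4}) for the $\ot$-structure; and since $w_{R,N}$ is invertible, $w^{-1}_{R,N}=\luni^{-1}_{TN}$ is exactly (\ref{w_R,N}). This normalisation is not part of the hypotheses and has to be derived. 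To do so put $v_X:=w_{R,X}\ci\luni^{-1}_{TX}:TX\to TX$, a natural transformation $T\to T$ for $T=R\smp\under$. Precomposing (\ref{P** spec}) at $L=R$ with $\luni^{-1}$ and simplifying by naturality of $\luni^{-1}$ and of $w$ and by (\ref{SMC2}) for $\ot$ gives $(v_M\smp N)\ci\gamma_{R,M,N}=\gamma_{R,M,N}\ci v_{M\smp N}$; specialising (\ref{tet-counit spec}) at $M=R$ and simplifying by naturality of $\luni^{-1}$ and by (\ref{SMC5}) for $\ot$ gives $\eps_R\ci v_R=\eps_R$. Taking $M=R$ in the first relation and composing on the left with $\eps_R\smp N$ turns it, via $\eps_R\ci v_R=\eps_R$ and the definition of $\mu$, into $\mu_N=\mu_N\ci v_{TN}$.

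Then naturality of $v$ and the monad unit law (\ref{SMC11}) give
\[
v_N=\mu_N\ci(R\smp\eta_N)\ci v_N=\mu_N\ci v_{TN}\ci(R\smp\eta_N)=\mu_N\ci(R\smp\eta_N)=\id_{TN},
\]
which is (\ref{w_R,N}). Consequently $w^{-1}_{R,N}=\luni^{-1}_{TN}$, the reduction above goes through, and $(\runi_M\smp N)\ci\tet_{M,R,N}=w_{M,N}$. Since the $w$ recovered from $\tet$ is literally the original one it is in particular a natural isomorphism, so the $\tet$ built from $w$ really is a tetrahedral isomorphism; this simultaneously finishes the verification that $w\mapsto\tet$ lands in the right set and establishes that (\ref{w(tet)}) and (\ref{tet(w)}) are mutually inverse bijections.
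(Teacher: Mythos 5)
Your proposal is correct and follows the same route as the paper's proof: delegate $\tet\mapsto w$ to Lemma \ref{lem: tet --> w}, verify the four tetrahedral axioms for the $\tet$ defined by (\ref{tet(w)}) --- with the pentagon (\ref{P**}) as the one substantive computation requiring (\ref{P** spec}) --- and establish bijectivity via the two round trips. One small imprecision: in the (\ref{P**}) chase you will need the heptagon (\ref{P** spec}) \emph{twice}, once with first argument $K\ot L$ to produce $\gamma_{K\ot L,M,N}$ and once with first argument $L$ to eliminate the inner $K\ot\gamma_{L,M,N}$; with only the single application you describe, the ``remaining identity'' still contains a general $\gamma_{L,M,N}$ and cannot collapse under naturality and the $\ot$-pentagon alone. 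This is bookkeeping rather than a conceptual issue, and the paper's computation confirms your strategy. More interestingly, your derivation of the normalisation (\ref{w_R,N}) is a genuine addition. The paper's verification that $w\mapsto\tet\mapsto w$ is the identity simply cites (\ref{w_R,N}), but Lemma \ref{lem: tet --> w} proves that identity only for a $w$ \emph{arising from} a tetrahedral isomorphism; invoking it for an abstract $w$ satisfying only (\ref{P** spec}) and (\ref{tet-counit spec}) is exactly the point you flag, and deriving it from the already-proved (\ref{tet-unit}) would be circular, since that only normalises the $w$ recovered from $\tet$. Your argument --- that $v_N:=w_{R,N}\ci\luni^{-1}_{TN}$ is a natural endomorphism of $T$ intertwining $\gamma_{R,M,N}$, absorbed by $\eps_R$ and hence by $\mu$, and therefore equal to the identity by the unit law (\ref{SMC11}) --- is correct and closes this small gap in the published proof.
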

\begin{proof}
Given a tetrahedral isomorphism $\tet$ the natural isomorphism $w$ defined by (\ref{w(tet)}) satisfies (\ref{P** spec}) and (\ref{tet-counit spec}) by Lemma \ref{lem: tet --> w}.

Assume $w$ is a natural isomorphism satisfying (\ref{P** spec}) and (\ref{tet-counit spec}) and define the natural transformation $\tet$ by (\ref{tet(w)}). Then the pentagon (\ref{P*}) is a simple consequence of the pentagon for $\asso$ (and invertibility of $w$). But in order to prove the other pentagon (\ref{P**}) we need its special case (\ref{P** spec}). The LHS 
of (\ref{P**}) can be written as
\begin{align*}
\text{LHS}&=(w_{K\ot L,M}\smp N)\ci(\asso_{K,L,TM}\smp N)\ci((K\ot w^{-1}_{L,M})\smp N)\\
&\qquad\ci w_{K\ot(L\smp M),N}\ci\asso_{K,L\smp M,TN}\ci(K\ot w^{-1}_{L\smp M,N})\ci(K\ot\gamma_{L,M,N})=\\
&=(w_{K\ot L,M}\smp N)\ci w_{(K\ot L)\ot TM,N}\ci(\asso_{K,L,TM}\ot TN)\ci\asso_{K,L\ot TM,TN}\\
&\qquad\ci(K\ot w^{-1}_{L\ot TM,N})\ci(K\ot(w^{-1}_{L,M}\smp N))\ci (K\ot\gamma_{L,M,N})=\\
&\eqby{P** spec}(w_{K\ot L,M}\smp N)\ci w_{(K\ot L)\ot TM,N}\ci(\asso_{K,L,TM}\ot TN)\ci\asso_{K,L\ot TM,TN}\\
&\qquad(K\ot\asso_{L,TM,TN})\ci(K\ot(L\ot w^{-1}_{TM,N}))\ci(K\ot(L\ot\gamma_{R,M,N}))\ci(K\ot w^{-1}_{L,M\smp N})=\\
&=(w_{K\ot L,M}\smp N)\ci w_{(K\ot L)\ot TM,N}\ci\asso_{K\ot L,TM,TN}\ci\asso_{K,L,TM\ot TN}\\
&\qquad\ci(K\ot(L\ot w^{-1}_{TM,N}))\ci(K\ot(L\ot\gamma_{R,M,N}))\ci(K\ot w^{-1}_{L,M\smp N})=\\
&=(w_{K\ot L,M}\smp N)\ci w_{(K\ot L)\ot TM,N}\ci\asso_{K\ot L,TM,TN}\ci ((K\ot L)\ot w^{-1}_{TM,N})\\
&\qquad\ci((K\ot L)\ot\gamma_{R,M,N})\ci\asso_{K,L,T(M\smp N)}\ci(K\ot w^{-1}_{L,M\smp N})=\\
&\eqby{P** spec}\gamma_{K\ot L,M,N}\ci w_{K\ot L,M\smp N}\ci\asso_{K,L,T(M\smp N)}\ci(K\ot w^{-1}_{L,M\smp N})
\end{align*}
which is exactly the RHS.
In order to prove (\ref{tet-unit}) insert $L=R$ in the definition (\ref{tet(w)}) of $\tet$ and multiply it with $\luni^{-1}_{M\smp N}$.
\begin{align*}
\tet_{R,M,N}\ci\luni^{-1}_{M\smp N}&=w_{R\ot M,N}\ci\asso_{R,M,TN}\ci\luni^{-1}_{M\ot TN}\ci w^{-1}_{M,N}=\\
&=w_{R\ot M,N}\ci(\luni^{-1}_{M}\ot TN)\ci w^{-1}_{M,N}=\luni^{-1}_M\smp N
\end{align*}
where we used (\ref{SMC2}) for $\ot$. Axiom (\ref{tet-counit}) in turn can be proven by using 
(\ref{tet-counit spec})
and (\ref{SMC3}) for $\ot$:
\begin{align*}
\eps_{M\ot N}\ci\tet_{M,N,R}&=\eps_{M\ot N}\ci w_{M\ot N,R}\ci\asso_{M,N,TR}\ci(M\ot w^{-1}_{N,R})=\\
&\eqby{tet-counit spec})\runi_{M\ot N}\ci((M\ot N)\ot\eps_R)\ci\asso_{M,N,TR}\ci(M\ot w^{-1}_{N,R})=\\
&=\runi_{M\ot N}\ci\asso_{M,N,R}\ci(M\ot ((N\ot\eps_R)\ci w^{-1}_{N,R}))=\\
&\eqby{SMC3}M\ot\left[\runi_N\ci(N\ot\eps_R)\ci w^{-1}_{N,R}\right] \eqby{tet-counit spec}M\ot\eps_N\,.
\end{align*}
This finishes the proof that $\tet$ is a tetrahedral homomorphism. That it is also a tetrahedral isomorphism will be a 
consequence of that the composite map $w\mapsto\tet\mapsto w$ is the identity. 
Indeed, it maps $w$ to
\begin{align*}
(\runi_M\smp N)\ci w_{M\ot R,N}\ci\asso_{M,R,TN}\ci(M\ot w^{-1}_{R,N})&=w_{M,N}\ci(\runi_M\ot TN)\ci\asso_{M,R,TN}\ci (M\ot\luni^{-1}_{TN})=\\
&=w_{M,N}
\end{align*}
by (\ref{w_R,N}) and by the (\ref{SMC4}) axiom for $\ot$. 
That $\tet\mapsto w\mapsto \tet$ is also the identity has been already proven in Lemma \ref{lem: tet --> w} when we 
verified (\ref{tet(w)}).
\end{proof}

Note that in case of tetrahedral isomorphisms axiom (\ref{tet-unit}) is redundant, it follows from (\ref{P*}) alone.
Indeed, in Lemma \ref{lem: tet --> w} (\ref{tet(w)}) was a consequence of only (\ref{P*}) and in the proof of Proposition 
\ref{pro: tet-w} we derived axiom (\ref{tet-unit}) using only (\ref{tet(w)}).

Having a natural isomorphism $w$ as in Proposition \ref{pro: tet-w} we can define what looks like an opmonoidal structure for the canonical monad $T$, namely
\begin{align}
\label{T(w)}
T^{M,N}&:=w^{-1}_{TM,N}\ci\gamma_{R,M,N}\ci Tw_{M,N}\ci T(M\ot\eta_N)\ :\ T(M\ot N)\to TM\ot TN\\
\label{T0(w)}
T^0&:=\eps_R\ :\ TR\to R\,.
\end{align}
In order to prove that they make the monad $\bra T,\mu,\eta\ket$ opmonoidal,
we use the technology of fusion operators. In contrast to Section \ref{s: O-ind}, however, we need $h$ to be expressed
in terms of $w$. Comparing (\ref{T(w)}) with (\ref{H(O)}) the conjecture is that  
\begin{equation} \label{H(w)}
h_{M,N}:=w^{-1}_{TM,N}\ci\gamma_{R,M,N}\ci Tw_{M,N}\ :\ T(M\ot TN)\to TM\ot TN
\end{equation}
is a fusion operator.

\begin{lem} \label{lem: H}
Let the natural isomorphism $w$ satisfy (\ref{P** spec}) and (\ref{tet-counit spec}). Then (\ref{H(w)}), together with $T^0=\eps_R$, 
is a fusion operator for the monad $\bra T,\mu,\eta\ket$, i.e., it satisfies equations (\ref{H0}-\ref{H6}) with 
$O,\omega,\iota,O^0$ replaced by $T,\mu,\eta,T^0$, respectively.
\end{lem}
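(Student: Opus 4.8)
The plan is to verify the seven fusion-operator identities (\ref{H0})--(\ref{H6}) directly, one at a time, after the substitutions $O=T=R\smp\under$, $\omega=\mu$, $\iota=\eta$, $O^0=T^0=\eps_R$ and the expansion of $h$ by (\ref{H(w)}). The available tools are: naturality of $w$, of $\gamma$ and of the monad data $\eta,\mu$; the skew-monoidal axioms (\ref{SMC1})--(\ref{SMC5}) for \emph{both} the $\smp$- and the $\ot$-structure; the definition $\mu_M=(\eps_R\smp M)\ci\gamma_{R,R,M}$ of Lemma \ref{lem: T Q chi}; and the two hypotheses (\ref{P** spec}), (\ref{tet-counit spec}) together with the consequence (\ref{w_R,N}) recorded in Lemma \ref{lem: tet --> w}. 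Note that (\ref{w_R,N}) exhibits $\luni^{-1}_{TN}$ as a two-sided inverse of the isomorphism $w_{R,N}$; this will be used several times.

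First I would dispatch the four cheap identities. Equation (\ref{H6}) \emph{is} (\ref{SMC5}). For (\ref{H2}), naturality of $\eta$ slides $\eta_{M\ot TN}$ past $Tw_{M,N}$ to yield $\eta_{M\smp N}\ci w_{M,N}$; then (\ref{SMC2}) replaces $\gamma_{R,M,N}\ci\eta_{M\smp N}$ by $\eta_M\smp N$, and naturality of $w$ in its first variable at $\eta_M\colon M\to TM$ finishes it. For (\ref{H3}), the factor $Tw_{R,N}\ci T\luni^{-1}_{TN}$ collapses to the identity by (\ref{w_R,N}), after which naturality of $w$ at $\eps_R\colon TR\to R$ together with $\luni^{-1}_{TN}\ci w_{R,N}=\id$ gives $(\eps_R\ot TN)\ci w^{-1}_{TR,N}=\luni^{-1}_{TN}\ci(\eps_R\smp N)$; composing with $\gamma_{R,R,N}$ and using the definition of $\mu$ completes it. For (\ref{H4}), (\ref{tet-counit spec}) with $M$ replaced by $TM$ rewrites $\runi_{TM}\ci(TM\ot\eps_R)\ci w^{-1}_{TM,R}$ as $\eps_{TM}$; then (\ref{SMC3}), in the guise $\eps_{R\smp M}\ci\gamma_{R,M,R}=R\smp\eps_M=T\eps_M$, and a second application of (\ref{tet-counit spec}) produce $T\runi_M\ci T(M\ot\eps_R)$.

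The two $\mu$-compatibility identities (\ref{H5}) and (\ref{H0}) use the skew-associativity of $\smp$ but stay short. In (\ref{H5}) the adjacent pair $Tw_{TM,N}\ci Tw^{-1}_{TM,N}=\id$ inside $h_{TM,N}\ci Th_{M,N}$ cancels; naturality of $\mu$ and of $w$ (at $w_{M,N}$ and at $\mu_M$) then strip away the surviving $w$-conjugators, and one is left to prove $(\mu_M\smp N)\ci\gamma_{R,TM,N}\ci T\gamma_{R,M,N}=\gamma_{R,M,N}\ci\mu_{M\smp N}$; expanding $\mu_M\smp N$ and $\mu_{M\smp N}$ by definition, this is the pentagon (\ref{SMC1}) with $K=L=R$ followed by naturality of $\gamma$ in the first variable at $\eps_R$. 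Identity (\ref{H0}), the corresponding compatibility with $\mu$ in the second tensor slot, is treated the same way: expand $h$ by (\ref{H(w)}), substitute $\mu_N=(\eps_R\smp N)\ci\gamma_{R,R,N}$, and push the two skew-associators past each other using (\ref{SMC1}) and naturality of $w$ and $\gamma$.

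The \emph{main obstacle} is the pentagon-type identity (\ref{H1}). Substituting (\ref{H(w)}) into the four copies of $h$ on each side and cancelling the $w/w^{-1}$ pairs that arise, both sides turn into composites of maps $\gamma_{R,-,-}$, $\ot$-associators $\asso$ and conjugating $w$'s. Matching them requires the heptagon (\ref{P** spec}), applied twice -- in exactly the role that (\ref{opmon1}) plays when (\ref{H1}) is derived from opmonoidality in the proof of Proposition \ref{pro: fusion} -- together with the pentagon (\ref{SMC1}) for $\gamma$, the pentagon (\ref{SMC1}) for the $\ot$-structure, and repeated naturality of $w$ and $\gamma$. The delicate point is purely organisational: rearranging the long composite so that the three $w$'s demanded by the left side of (\ref{P** spec}) line up. If this chase gets out of hand, a fallback is to first deduce the opmonoidality axiom (\ref{opmon1}) for $T^{M,N}:=w^{-1}_{TM,N}\ci\gamma_{R,M,N}\ci Tw_{M,N}\ci T(M\ot\eta_N)$ from (\ref{P** spec}) and (\ref{SMC1}), invoke the already-completed computation of Proposition \ref{pro: fusion} passing from (\ref{opmon1}) to (\ref{H1}), and check that the $h$ of (\ref{H(w)}) coincides with the fusion operator (\ref{H(O)}) built from this $T^{M,N}$, which reduces to (\ref{H0}) plus the monad unit law $\mu_N\ci\eta_{TN}=\id$.
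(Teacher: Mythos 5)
Your handling of six of the seven identities tracks the paper's own proof closely: (\ref{H6}) is (\ref{SMC5}); (\ref{H2}), (\ref{H3}), (\ref{H4}) go exactly as you say via (\ref{SMC2}), (\ref{w_R,N}) and two applications of (\ref{tet-counit spec}); (\ref{H5}) is the cancellation-plus-pentagon argument you describe; and for the pentagon (\ref{H1}) your primary route (two applications of the heptagon (\ref{P** spec}) interleaved with (\ref{SMC1}) and naturality) is precisely what the paper does. Your fallback for (\ref{H1}) is circular, though: identifying the $h$ of (\ref{H(w)}) with the fusion operator built from $T^{M,N}$ already requires (\ref{H0}), as you yourself note.

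The genuine gap is (\ref{H0}). You claim it is ``treated the same way'' as (\ref{H5}), using only (\ref{SMC1}) and naturality of $w$ and $\gamma$. That cannot work. In (\ref{H5}) the morphism $\mu_M\colon T^2M\to TM$ sits in the \emph{first} tensor factor, where $w_{-,N}$ is natural in an unconstrained variable, so $(\mu_M\ot TN)\ci w^{-1}_{T^2M,N}=w^{-1}_{TM,N}\ci(\mu_M\smp N)$ comes for free. In (\ref{H0}) the morphism $\mu_N\colon T(TN)\to TN$ sits in the \emph{second} factor, where naturality of $w_{M,-}\colon M\ot T(-)\to M\smp(-)$ only transports morphisms of the form $Tf$ with $f\colon N\to N'$; $\mu_N$ is not of that form, and substituting $\mu_N=(\eps_R\smp N)\ci\gamma_{R,R,N}$ does not help because the intermediate object $(R\smp R)\smp N$ is not $T$ of anything, so $w$ still cannot be applied to it. The missing ingredient is the auxiliary identity
\[
w_{M,N}\ci(M\ot\mu_N)\;=\;(\eps_M\smp N)\ci\gamma_{M,R,N}\ci w_{M,TN}
\]
(the paper's (\ref{mu-w})), which is what actually lets $\mu_N$ cross $w$. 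Its proof is not a naturality chase: one inserts the $\ot$-version of axiom (\ref{SMC4}) using $w^{-1}_{R,N}=\luni^{-1}_{TN}$ from (\ref{w_R,N}), then applies (\ref{tet-counit spec}) and the heptagon (\ref{P** spec}) once more --- so both hypotheses of the Lemma are needed again here. Only after this preparation does (\ref{H0}) follow from (\ref{SMC1}) and (\ref{SMC3}). Without some such lemma your derivation of (\ref{H0}) stalls at the very first step.
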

\begin{proof}
First we prove (\ref{H1}) by unpacking it by means of (\ref{H(w)}) and then using (\ref{P** spec}) twice:
\begin{align*}
&(h_{L,M}\ot TN)\ci h_{L\ot TM,N}\ci T\asso_{L,TM,TN}\ci T(L\ot h_{M,N})\\
&=(w^{-1}_{TL,M}\ot TN)\ci(\gamma_{R,L,M}\ot TN)\ci (Tw_{L,M}\ot TN)\ci w^{-1}_{T(L\ot TM),N}\ci\gamma_{R,L\ot TM,N}\\
&\qquad\ci Tw_{L\ot TM,N}\ci T\asso_{L,TM,TN}\ci T(L\ot w^{-1}_{TM,N})\ci T(L\ot\gamma_{R,M,N})\ci T(L\ot Tw_{M,N})=\\
&=(w^{-1}_{TL,M}\ot TN)\ci w^{-1}_{TL\smp M,N}\ci(\gamma_{R,L,M}\smp N)\ci\gamma_{R,L\smp M,N}\\
&\qquad \ci T\left[(w_{L,M}\smp N)\ci w_{L\ot TM,N}\ci \asso_{L,TM,TN}\ci (L\ot w^{-1}_{TM,N})\ci (L\ot\gamma_{R,M,N})\ci (L\ot Tw_{M,N})\right]=\\
&\eqby{P** spec}(w^{-1}_{TL,M}\ot TN)\ci w^{-1}_{TL\smp M,N}\ci(\gamma_{R,L,M}\smp N)\ci\gamma_{R,L\smp M,N}\ci T\gamma_{L,M,N}\\
&\qquad\ci Tw_{L,M\smp N}\ci T(L\ot Tw_{M,N})=\\
&\eqby{SMC1}(w^{-1}_{TL,M}\ot TN)\ci w^{-1}_{TL\smp M,N}\ci\gamma_{TL,M,N}\ci\gamma_{R,L,M\smp N}
\ci Tw_{L,M\smp N}\ci T(L\ot Tw_{M,N})=\\
&=w^{-1}_{TL\ot TM,N}\ci(w^{-1}_{TL,M}\smp N)\ci\gamma_{TL,M,N}\ci w_{TL,M\smp N}\ci h_{L,M\smp N}\ci T(L\ot Tw_{M,N})=\\
&\eqby{P** spec}\asso_{TL,TM,TN}\ci(TL\ot w^{-1}_{TM,N})\ci(TL\ot \gamma_{R,M,N})\ci h_{L,M\smp N}\ci T(L\ot Tw_{M,N})=\\
&=\asso_{TL,TM,TN}\ci(TL\ot h_{M,N})\ci h_{L,M\ot TN}
\end{align*}
Equations (\ref{H2}), (\ref{H3}) and (\ref{H4}) can be shown as follows:
\begin{align*}
h_{M,N}\ci\eta_{M\ot TN}&=w^{-1}_{TM,N}\ci\gamma_{R,M,N}\ci Tw_{M,N}\ci\eta_{M\ot TN}=\\
&=w^{-1}_{TM,N}\ci\gamma_{R,M,N}\ci\eta_{M\smp N}\ci w_{M,N}=\\
&\eqby{SMC2}w^{-1}_{TM,N}\ci(\eta_M\smp N)\ci w_{M,N}=\eta_M\ot TN\,.
\end{align*}
\begin{align*}
(T^0\ot TN)\ci h_{R,N}\ci T\luni^{-1}_{TN}&=(\eps_R\ot TN)\ci w^{-1}_{TR,N}\ci\gamma_{R,R,N}\ci Tw_{R,N}\ci T\luni^{-1}_{TN}=\\
&\eqby{w_R,N} w^{-1}_{R,N}\ci (\eps_R\smp N)\ci\gamma_{R,R,N}=\\
&\eqby{w_R,N} \luni^{-1}_{TN}\ci\mu_N\,.
\end{align*}
\begin{align*}
\runi_{TM}\ci(TM\ot T^0)\ci h_{M,R}&=\runi_{TM}\ci(TM\ot\eps_R)\ci w^{-1}_{TM,R}\ci\gamma_{R,M,R}\ci Tw_{M,R}=\\
&\eqby{tet-counit spec}\eps_{TM}\ci\gamma_{R,M,R}\ci Tw_{M,R}=\\
&\eqby{SMC3}T\eps_M\ci Tw_{M,R}=\\
&\eqby{tet-counit spec}T\runi_M\ci T(M\ot T^0)\,.
\end{align*}
In order to prove (\ref{H0}) we need some preparation.
\begin{align}
\notag
&w_{M,N}\ci(M\ot\mu_N)=\\
\notag
&=w_{M,N}\ci(\runi_M\ot TN)\ci\asso_{M,R,TN}\ci(M\ot w^{-1}_{R,N})
\ci(M\ot(\eps_R\smp N))\ci(M\ot \gamma_{R,R,N})=\\
\notag
&=w_{M,N}\ci(\runi_M\ot TN)\ci((M\ot\eps_R)\ot  TN)\ci\asso_{M,TR,TN}\ci(M\ot w^{-1}_{TR,N})\ci(M\ot \gamma_{R,R,N})=\\
\notag
&\eqby{tet-counit spec}w_{M,N}\ci(\eps_M\ot TN)\ci(w_{M,R}\ot TN)\ci\asso_{M,TR,TN}\ci(M\ot w^{-1}_{TR,N})
\ci(M\ot \gamma_{R,R,N})=\\
\notag
&\eqby{P** spec}w_{M,N}\ci(\eps_M\ot TN)\ci w^{-1}_{M\smp R,N}\ci\gamma_{M,R,N}\ci w_{M,TN}=\\
\notag
&=(\eps_M\smp N)\ci\gamma_{M,R,N}\ci w_{M,TN}=\\
\label{mu-w}
&=\mu_{M,N}\ci w_{M,TN}
\end{align}
where in the first line we inserted an identity arrow in the form of the $\ot$-version of axiom (\ref{SMC4}), using also (\ref{w_R,N}),
and in the last line we used the notation of (\ref{mu_K,L}). It follows that
\begin{align}
\notag
(TM\ot\mu_N)\ci h_{M,TN}&=(TM\ot\mu_N)\ci w^{-1}_{TM,TN}\ci\gamma_{R,M,TN}\ci Tw_{M,TN}=\\
\notag
&\eqby{mu-w}w^{-1}_{TM,N}\ci\mu_{TM,N}\ci\gamma_{R,M,TN}\ci Tw_{M,TN}=\\
\notag
&\eqby{SMC1}w^{-1}_{TM,N}\ci(\eps_{TM}\smp N)\ci(\gamma_{R,M,R}\smp N)\ci\gamma_{R,M\smp R,N}\ci T\gamma_{M,R,N}
\ci Tw_{M,TN}=\\
\notag
&\eqby{SMC3}w^{-1}_{TM,N}\ci\gamma_{R,M,N}\ci T\mu_{M,N}\ci Tw_{M,TN}=\\
\notag
&\eqby{mu-w}w^{-1}_{TM,N}\ci\gamma_{R,M,N}\ci Tw_{M,N}\ci T(M\ot\mu_N)=\\ 
\label{mu-H}
&=h_{M,N}\ci T(M\ot\mu_N)
\end{align}
which is relation (\ref{H0}). While (\ref{H6}) obviously follows from (\ref{SMC5}) the proof of (\ref{H5}) needs some work:
\begin{align*}
&(\mu_M\ot TN)\ci h_{TM,N}\ci Th_{M,N}=\\
&=(\mu_M\ot TN)\ci w^{-1}_{T^2M,N}\ci\gamma_{R,TM,N}\ci Tw_{TM,N}\ci Tw^{-1}_{TM,N}\ci T\gamma_{R,M,N}\ci T^2w_{M,N}=\\
&=w^{-1}_{TM,N}\ci(\mu_M\smp N)\ci\gamma_{R,TM,N}\ci T\gamma_{R,M,N}\ci T^2w_{M,N}=\\
&\eqby{SMC1}w^{-1}_{TM,N}\ci((\eps_R\smp M)\smp N)\ci\gamma_{R\smp R,M,N}\ci\gamma_{R,R,M\smp N}\ci T^2w_{M,N}=\\
&=w^{-1}_{TM,N}\ci\gamma_{R,M,N}\ci\mu_{M\smp N}\ci T^2w_{M,N}=h_{M,N}\ci\mu_{M\ot TN}\,.
\end{align*}
\end{proof}

\begin{pro} \label{pro: T opmon}
Given a monoidal structure $\ot$ and a right-monoidal structure $\smp$ on the same category and with the same unit object $R$
the existence of a natural isomorphism $w_{M,N}:M\ot(R\smp N)\to M\smp N$ satisfying equations (\ref{P** spec}) and
(\ref{tet-counit spec}) implies that the formulas (\ref{T(w)}), (\ref{T0(w)}) define a $\ot$-opmonoidal structure for the
canonical monad $T=\bra R\smp\under,\mu,\eta\ket$ of the $\smp$-structure. 
\end{pro}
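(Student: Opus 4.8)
The plan is to deduce the proposition directly from the fusion-operator machinery developed in the previous two results, so that almost no new computation is needed. First, recall from Lemma~\ref{lem: T Q chi} that $T=\bra R\smp\under,\mu,\eta\ket$ is genuinely a monad on the underlying category, so it makes sense to ask for $\ot$-opmonoidal structures on it. Since an ordinary monoidal structure is in particular a right-monoidal structure, Proposition~\ref{pro: fusion} applies verbatim with $\E$ the given category, $\ot$ the monoidal structure, and $\bra O,\omega,\iota\ket=\bra T,\mu,\eta\ket$: it tells us that $\ot$-opmonoidal structures $T^{M,N},T^0$ on $T$ are in bijection with pairs consisting of a fusion operator $h_{M,N}\colon T(M\ot TN)\to TM\ot TN$ and an arrow $T^0\colon TR\to R$ satisfying (\ref{H0}--\ref{H6}), the bijection being given by (\ref{H(O)}) and (\ref{O(H)}).

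Second, the hypotheses of the proposition are exactly the hypotheses of Lemma~\ref{lem: H}: $w$ is a natural isomorphism satisfying (\ref{P** spec}) and (\ref{tet-counit spec}). Hence that Lemma produces the required data, namely that $h_{M,N}$ defined by (\ref{H(w)}) together with $T^0:=\eps_R$ satisfy (\ref{H0}--\ref{H6}). Feeding this pair through the bijection of Proposition~\ref{pro: fusion}, i.e. through formula (\ref{O(H)}), yields an honest $\ot$-opmonoidal structure on $T$:
\[
T^{M,N}=h_{M,N}\ci T(M\ot\eta_N)=w^{-1}_{TM,N}\ci\gamma_{R,M,N}\ci Tw_{M,N}\ci T(M\ot\eta_N),\qquad T^0=\eps_R .
\]
These are precisely the formulas (\ref{T(w)}) and (\ref{T0(w)}), which completes the argument.

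There is essentially no remaining obstacle: the substantive work, namely verifying the seven fusion-operator identities (\ref{H0}--\ref{H6}) for $T$ out of the heptagon (\ref{P** spec}) and the counit relation (\ref{tet-counit spec}), has already been carried out in Lemma~\ref{lem: H}, and the translation between fusion operators and opmonoidal structures is supplied by Proposition~\ref{pro: fusion}. The only point worth a word of care is that Proposition~\ref{pro: fusion} was phrased over a right-monoidal base, so one should note that the present $\ot$, being a genuine monoidal structure, is the special case in which $\asso$, $\luni^{-1}$, $\runi$ happen to be invertible; in fact invertibility is never used. One could alternatively verify (\ref{opmon1}--\ref{opmon7}) for $T^{M,N},T^0$ by hand, but that would merely reproduce the content of Lemma~\ref{lem: H} together with Proposition~\ref{pro: fusion}.
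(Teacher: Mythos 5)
Your argument is correct and coincides with the paper's own proof, which likewise obtains the result as an immediate consequence of Lemma~\ref{lem: H} combined with the bijection of Proposition~\ref{pro: fusion}. Your remark that Proposition~\ref{pro: fusion} applies because an ordinary monoidal structure is a special right-monoidal one is a fair (and harmless) point of care that the paper leaves implicit.
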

\begin{proof}
This is an immediate consequence of Lemma \ref{lem: H} and Proposition \ref{pro: fusion}.
\end{proof}

\begin{thm} \label{thm: representability}
Let $\bra \E,\ot, R,\asso,\luni^{-1},\runi\ket$ be a monoidal category. Then for a right-monoidal structure $\smp$ on $\E$ with unit object $R$ the following conditions are equivalent:
\begin{enumerate}
\item The $\smp$-structure is $\ot$-representable (by a $\ot$-bimonad) in the sense of Definition \ref{def: repr}.

\item There exists a natural isomorphism $w_{M,N}:M\ot (R\smp N)\to M\smp N$ satisfying the heptagon (\ref{P** spec}) and 
the tetragon (\ref{tet-counit spec}).

\item There exists a tetrahedral isomorphism $\tet_{L,M,N}:L\ot(M\smp N)\iso(L\ot M)\smp N$.
\end{enumerate}
\end{thm}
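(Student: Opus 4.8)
The plan is to establish the cycle (iii) $\Rightarrow$ (ii) $\Rightarrow$ (i) $\Rightarrow$ (iii), in which one implication is a direct citation, one is a short verification, and only the last carries a genuine computation. The equivalence (ii) $\Leftrightarrow$ (iii) is immediate: it is exactly Proposition \ref{pro: tet-w}, which via (\ref{w(tet)}) and (\ref{tet(w)}) sets up a bijection between tetrahedral isomorphisms $\tet_{L,M,N}\colon L\ot(M\smp N)\to(L\ot M)\smp N$ and natural isomorphisms $w_{M,N}\colon M\ot(R\smp N)\to M\smp N$ satisfying the heptagon (\ref{P** spec}) and the tetragon (\ref{tet-counit spec}). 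So it suffices to prove (ii) $\Rightarrow$ (i) $\Rightarrow$ (iii).

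For (ii) $\Rightarrow$ (i): given $w$ as in (ii), Proposition \ref{pro: T opmon} equips the canonical monad $T=\bra R\smp\under,\mu,\eta\ket$ with a $\ot$-opmonoidal structure through (\ref{T(w)}), (\ref{T0(w)}), so $T$ is a $\ot$-bimonad whose fusion operator is $h$ of (\ref{H(w)}) (Lemma \ref{lem: H}). By Proposition \ref{pro: O-ind} this bimonad induces a right-monoidal structure $\odot$ on $\E$ with $M\odot N=M\ot TN=M\ot(R\smp N)$, skew-associator $\dot\gamma_{L,M,N}=\asso_{L,TM,TN}\ci(L\ot h_{M,N})$, unit $\dot\eta_M=\luni^{-1}_{TM}\ci\eta_M$ and counit $\dot\eps_M=\runi_M\ci(M\ot\eps_R)$. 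I would then check that $w_{M,N}\colon M\odot N\to M\smp N$ is a twist from the $\odot$-structure to the $\smp$-structure, i.e. that $\bra\id_\E,w,1_R\ket$ satisfies the right-monoidal functor axioms (\ref{smf1})--(\ref{smf3}). Axiom (\ref{smf1}) comes out by expanding $\dot\gamma$ via (\ref{H(w)}), cancelling the invertible factor $X\ot Tw_{Y,Z}$, moving $w_{X,Y}\ot TZ$ across $w_{X\smp Y,Z}$ by naturality of $w$, and then invoking (\ref{P** spec}); axiom (\ref{smf2}) reduces, using $\dot\eta=\luni^{-1}_{T\under}\ci\eta$, to $w_{R,N}\ci\luni^{-1}_{TN}=\id$, which is (\ref{w_R,N}) of Lemma \ref{lem: tet --> w} (legitimate here thanks to Proposition \ref{pro: tet-w}); and axiom (\ref{smf3}) is literally (\ref{tet-counit spec}). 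Hence $\smp$ is twist-isomorphic to a $\ot$-bimonad-induced structure, i.e. $\ot$-representable.

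For (i) $\Rightarrow$ (iii): assume $\smp$ is twist-isomorphic, via $v_{M,N}\colon M\odot N\to M\smp N$, to the structure $M\odot N=M\ot ON$ of a $\ot$-bimonad $O$ as in Proposition \ref{pro: O-ind}. The key observation is that the ordinary associator already provides a tetrahedral isomorphism for the $\odot$-structure, namely $\tet^\odot_{L,M,N}:=\asso_{L,M,ON}\colon L\ot(M\ot ON)\to(L\ot M)\ot ON$: axiom (\ref{P*}) is the pentagon for $\asso$ at $(K,L,M,ON)$; (\ref{tet-unit}) and (\ref{tet-counit}) follow from the standard coherence identities of the genuine monoidal structure $\ot$ together with the definitions (\ref{doteta}), (\ref{doteps}); and (\ref{P**}) follows from the pentagon and naturality of $\asso$ together with the fusion expression $\dot\gamma_{L,M,N}=\asso_{L,OM,ON}\ci(L\ot h_{M,N})$ obtained from (\ref{dotgamma}) and (\ref{H(O)}). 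Since $\asso$ and $\runi$ are invertible ($\ot$ being monoidal), $\tet^\odot$ is automatically a tetrahedral isomorphism. Finally I transport it along the twist: $\tet_{L,M,N}:=v_{L\ot M,N}\ci\tet^\odot_{L,M,N}\ci(L\ot v^{-1}_{M,N})$ is a natural isomorphism $L\ot(M\smp N)\to(L\ot M)\smp N$, and its tetrahedral axioms follow from those of $\tet^\odot$ combined with the right-monoidal functor axioms (\ref{smf1})--(\ref{smf3}) for $\bra\id_\E,v,1_R\ket$, which precisely relate $\gamma,\eta,\eps$ of the $\smp$-structure to $\dot\gamma,\dot\eta,\dot\eps$ of the $\odot$-structure.

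The part I expect to be the real obstacle is this last transport, specifically verifying the second pentagon (\ref{P**}) for the conjugated $\tet$: everything else is either cited outright or reduces to naturality and monoidal coherence, whereas (\ref{P**}) genuinely interleaves the compatibility (\ref{smf1}) of the twist $v$ with the $\odot$-version of (\ref{P**}) for $\asso$, and demands some care in keeping track of where the $O$-legs sit.
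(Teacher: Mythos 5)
Your proposal is correct, and two of its three legs coincide with the paper's: the equivalence (ii)$\Leftrightarrow$(iii) is, as you say, exactly Proposition \ref{pro: tet-w}, and your (ii)$\Rightarrow$(i) argument (bimonad structure on $T$ via Proposition \ref{pro: T opmon}, induced $\odot$ via Proposition \ref{pro: O-ind}, then verification that $w$ is a twist using (\ref{H(w)}), (\ref{P** spec}), (\ref{w_R,N}) and (\ref{tet-counit spec})) is essentially verbatim what the paper does. Where you genuinely diverge is the remaining leg: the paper closes the cycle by proving (i)$\Rightarrow$(ii) directly, defining $w_{M,N}:=v_{M,N}\ci(M\ot\luni_{ON})\ci(M\ot v^{-1}_{R,N})$ from the twist $v$ and checking the heptagon and tetragon by a fairly long computation with (\ref{twist1})--(\ref{twist3}) and (\ref{dotgamma})--(\ref{doteps}), whereas you prove (i)$\Rightarrow$(iii) by observing that $\tet^\odot_{L,M,N}:=\asso_{L,M,ON}$ is already a tetrahedral isomorphism for the bimonad-induced structure and then conjugating by $v$. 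I checked your route in detail and it works: for $\tet^\odot$, axiom (\ref{P*}) is the pentagon for $\asso$ at $(K,L,M,ON)$, (\ref{tet-unit}) and (\ref{tet-counit}) reduce to (\ref{SMC2}), (\ref{SMC3}) for $\ot$ plus naturality, (\ref{P**}) reduces to the pentagon and naturality once $\dot\gamma_{L,M,N}=\asso_{L,OM,ON}\ci(L\ot h_{M,N})$ is substituted, and invertibility of $\asso$, $\runi$ and $v$ gives the isomorphism property; for the conjugate $\tet_{L,M,N}=v_{L\ot M,N}\ci\asso_{L,M,ON}\ci(L\ot v^{-1}_{M,N})$, the step you flag as the obstacle, namely (\ref{P**}), does go through --- one applies (\ref{twist1}) once at $(K\ot L,M,N)$ to rewrite $\gamma_{K\ot L,M,N}\ci v_{K\ot L,M\smp N}$, uses naturality of $\asso$ and the pentagon to split $\asso_{K\ot L,OM,ON}\ci\asso_{K,L,OM\ot ON}$, applies (\ref{twist1}) a second time at $(L,M,N)$ to reintroduce $\gamma_{L,M,N}$, and finishes with naturality of $v$ in its first argument. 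Your approach trades the paper's one long heptagon/tetragon verification for three easy coherence checks plus one two-fold application of the twist axiom; it is arguably more transparent about \emph{why} (i) implies the tetrahedral condition (the associator of $\ot$ is the tetrahedral isomorphism of the model structure $\odot$, and tetrahedral isomorphisms transport along twists). One small caveat, shared equally with the paper: both your (ii)$\Rightarrow$(i) and the paper's invoke (\ref{w_R,N}) for a $w$ that is only assumed to satisfy the heptagon and tetragon, which strictly speaking rests on the round-trip identity in Proposition \ref{pro: tet-w}; since you cite that proposition exactly as the paper does, this is not a defect of your argument relative to the paper's.
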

\begin{proof}
Equivalence of (ii) and (iii) has been shown in Proposition \ref{pro: tet-w}. Assume (i). This means that there exist a bimonad 
$\bra O,\omega,\iota\ket$ w.r.t. the $\ot$-structure and a skew-twist $v_{M,N}:M\odot N\to M\smp N$ where $\odot$ is
the skew-monoidal structure induced by $O$ in the sense of Proposition \ref{pro: O-ind}. Therefore $v$ satisfies the relations
\begin{align}
\label{twist1}
v_{L\smp M,N}\ci(v_{L,M}\ot ON)\ci\dot\gamma_{L,M,N}&=\gamma_{L,M,N}\ci v_{L,M\smp N}\ci(L\ot Ov_{M,N})\\
\label{twist2}
v_{R,N}\ci\dot\eta_N&=\eta_N\\
\label{twist3}
\dot\eps_M&=\eps_M\ci v_{M,R}
\end{align}
where $\dot\gamma$, $\dot\eta$, $\dot\eps$ are the expressions (\ref{dotgamma}), (\ref{doteta}), (\ref{doteps}).
We claim that the composite
\begin{equation}
w_{M,N}:=\left(M\ot TN \longrarr{M\ot v^{-1}_{R,N}}M\ot(R\odot N)\longrarr{M\ot\luni_{ON}}M\odot N
\longrarr{v_{M,N}}M\smp N\right)
\end{equation}
is a natural isomorphism satisfying (\ref{P** spec}) and (\ref{tet-counit spec}). 
With the notation $u_N:=\luni_{ON}\ci v^{-1}_{R,N}$ the left hand side of (\ref{P** spec}) can be transformed to the right hand side 
as follows.
\begin{align*}
&v_{L\smp M,N}\ci(v_{L,M}\ot ON)\ci((L\ot u_M)\ot ON)\ci ((L\ot TM)\ot u_N)\ci\asso_{L,TM,TN}\ci(L\ot (TM\ot u_N^{-1}))\\
&\qquad\ci(L\ot v^{-1}_{TM,N})\ci(L\ot\gamma_{R,M,N})=\\
&=v_{L\smp M,N}\ci(v_{L,M}\ot ON)\ci\asso_{L,OM,ON}\ci (L\ot(u_M\ot ON))\ci(L\ot v^{-1}_{TM,N})\ci(L\ot\gamma_{R,M,N})=\\
&\eqby{twist1}v_{L\smp M,N}\ci(v_{L,M}\ot ON)\ci\asso_{L,OM,ON}\ci(L\ot(\luni_{OM}\ot ON))\ci(L\ot\dot\gamma_{R,M,N})\\
&\qquad\ci(L\ot(R\ot Ov^{-1}_{M,N}))\ci(L\ot v^{-1}_{R,M\smp N})=\\
&\eqby{dotgamma}v_{L\smp M,N}\ci(v_{L,M}\ot ON)\ci\asso_{L,OM,ON}\ci(L\ot\luni_{OM\ot ON})\\
&\qquad\ci(L\ot(R\ot(OM\ot\omega_N)))\ci(L\ot(R\ot O^{M,ON}))\ci(L\ot(R\ot Ov^{-1}_{M,N}))\ci(L\ot v^{-1}_{R,M\smp N})=\\
&=v_{L\smp M,N}\ci(v_{L,M}\ot ON)\ci\asso_{L,OM,ON}\ci(L\ot(OM\ot\omega_N))\ci(L\ot O^{M,ON})\\
&\qquad\ci(L\ot Ov^{-1}_{M,N})\ci(L\ot u_{M\smp N})=\\
&=v_{L\smp M,N}\ci(v_{L,M}\ot ON)\ci\dot\gamma_{L,M,N}\ci(L\ot Ov^{-1}_{M,N})\ci(L\ot u_{M\smp N})=\\
&\eqby{twist1}\gamma_{L,M,N}\ci v_{L,M\smp N}\ci(L\ot u_{M\smp N})=\\
&=\gamma_{L,M,N}\ci w_{L,M\smp N}\,.
\end{align*}
In order to prove (\ref{tet-counit spec}) we compute its left hand side 
\begin{align*}
\eps_M\ci w_{M,R}&\eqby{twist3}\dot\eps_M\ci(M\ot u_R)=\\
&=\dot\eps_M\ci(\runi_M\ot OR)\ci\asso_{M,R,OR}\ci(M\ot v^{-1}_{R,R})=\\
&=\runi_M\ci\dot\eps_{M\ot R}\ci\asso_{M,R,OR}\ci(M\ot v^{-1}_{R,R})=\\
&\eqby{doteps}\runi_M\ci\runi_{M\ot R}\ci((M\ot R)\ot O^0) \ci\asso_{M,R,OR}\ci(M\ot v^{-1}_{R,R})=\\
&=\runi_M\ci(M\ot \runi_R)\ci(M\ot(R\ot O^0))\ci(M\ot v^{-1}_{R,R})=\\
&\eqby{doteps}\runi_M\ci(M\ot\dot\eps_R)\ci(M\ot v^{-1}_{R,R})=\\
&\eqby{twist3}\runi_M\ci(M\ot\eps_R)
\end{align*}
and arrive to to the expression on the right hand side. This proves the implication (i)$\Rightarrow$(ii).

Now assume (ii). Then we know by Proposition \ref{pro: T opmon} that $T$ is a bimonad, so by Proposition \ref{pro: O-ind}
that $M\odot N:=M\ot TN$ is a right-monoidal product. Therefore $\ot$-representability of the $\smp$-structure would follow
immediately if we could show that $w_{M,N}:M\odot N\to M\smp N$ is a twist.
\begin{align*}
&w_{L\smp M,N}\ci(w_{L,M}\ot TN)\ci\dot\gamma_{L,M,N}=\\
&=w_{L\smp M,N}\ci(w_{L,M}\ot TN)\ci\asso_{L,TM,TN}\ci(L\ot(TM\ot\mu_N))\ci(L\ot T^{M,TN})=\\
&\eqby{H(w)}(w_{L,M}\smp N)\ci w_{L\ot TM,N}\ci\asso_{L,TM,TN}\ci(L\ot w^{-1}_{TM,N})\ci(L\ot\gamma_{R,M,N})
\ci(L\ot Tw_{M,N})=\\
&\eqby{P** spec}\gamma_{L,M,N}\ci w_{L,M\smp N}\ci(L\ot Tw_{M,N})
\end{align*}
proves the hexagon relation  (\ref{twist1}) for $w$. The following simple computations yield the remaining relations:
\begin{align*}
w_{R,N}\ci\dot\eta&\eqby{doteta}w_{R,N}\ci\luni^{-1}_{TN}\ci\eta_N\eqby{w_R,N}\eta_N\\
\eps_M\ci w_{M,R}&=\runi_M\ci(M\ot\eps_R)=\runi_M\ci(M\ot T^0)\eqby{doteps}\dot\eps_M\,.
\end{align*}
So, $w$ is indeed a twist and this finishes the proof of the implication (ii)$\Rightarrow$(i).
\end{proof}

\section{Closed skew-monoidal categories} \label{s: closed rep}

A skew-monoidal category $\bra\M,\smp,R,\gamma,\eta,\eps\ket$ is called left (right) closed if the endofunctor $\under\smp N$
(resp. $N\smp\under$) has a right adjoint $\hom^l(N,\under)$ (resp. $\hom^r(N,\under)$) for all object $N\in\M$. It is called closed if it is both left closed and right closed.

\begin{thm} \label{thm: bgd}
Let $R$ be a ring. Then closed right-monoidal structures $\bra\Ab_R,\smp,R,\gamma,\eta,\eps\ket$ on the category of right $R$-modules, with unit object being the right-regular $R$-module, are precisely the right bialgebroids over $R$.
\end{thm}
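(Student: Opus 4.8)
The plan has two halves. For the direction from bialgebroids to closed skew-monoidal structures, observe that Section~\ref{s: bgd} already produces, from a right $R$-bialgebroid $H$, the right-monoidal category $\bra\Ab_R,\smp,R_R,\gamma,\eta,\eps\ket$ with $M\smp N=M\am{R_1}(N\am{R_2}H)$, so it only remains to verify closedness (mutual inverseness with the construction below is addressed at the end). Here $\under\smp N=\under\oR B_N$ is tensoring by the $(R,R)$-bimodule $B_N:=N\am{R_2}H$ (left action $\lambda_1$, right action $\rho_2$), hence has right adjoint $\Hom_R(B_N,\under)$; and $N\smp\under$ is the composite $M\mapsto M\am{R_2}H\mapsto N\am{R_1}(M\am{R_2}H)$ of two functors of ``tensor by an $R$-bimodule built from $H$'' type, each of which has a hom-functor as right adjoint. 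So $\smp$ is closed and the assignment $H\mapsto\bra\Ab_R,\smp,R_R,\gamma,\eta,\eps\ket$ lands in closed right-monoidal structures.

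For the converse, let $\bra\Ab_R,\smp,R_R,\gamma,\eta,\eps\ket$ be closed. Since $E=\End(R_R)\cong R$, the category $\E$ of $E$-objects is $_R\Ab_R$ with $\ot=\oR$ an ordinary monoidal category. \emph{Step 1.} Left closedness makes each $\under\smp N$ cocontinuous, so by the Eilenberg--Watts theorem $M\smp N\cong M\oR TN$ naturally in $M$ and $N$, where $T=R\smp\under$. \emph{Step 2.} Right closedness makes every $L\smp\under$, in particular the canonical monad $T=R\smp\under$, cocontinuous; hence Proposition~\ref{pro: gamma+} and Theorem~\ref{thm: equivalence of module cats} apply, producing the lifted right-monoidal structure $\smpq$ on $\E$, its canonical monad $T_q$, a monad morphism $\bra\phi,\kappa\ket$ from $T_q$ to $T$, and an equivalence $\E_{T_q}\simeq(\Ab_R)_T$. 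Moreover, since $q_{M,N}:M\smp N\to M\smpq N$ exhibits $M\smpq N$ as a colimit of the functor $M\mapsto M\smp N\cong M\oR TN$ (cocontinuous in $M$ by Step~1), the functor $\under\smpq N$ is cocontinuous on $_R\Ab_R$, so the Eilenberg--Watts theorem supplies a natural isomorphism $w_{M,N}:M\oR T_qN\iso M\smpq N$ with $T_qN=R\smpq N$.

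\emph{Step 3.} One next checks, by a direct chase from the defining diagrams~(\ref{q-hexa}) and (\ref{q-squares}) of $\gamma^q,\eta^q,\eps^q$ and the isomorphism of Step~1, that $w$ satisfies the heptagon~(\ref{P** spec}) and the tetragon~(\ref{tet-counit spec}). By Theorem~\ref{thm: representability} (with $\ot=\oR$ on $\E=\,_R\Ab_R$ and skew structure $\smpq$), equivalently by Proposition~\ref{pro: T opmon}, the canonical monad $T_q$ carries an opmonoidal structure, i.e.\ it is a $\oR$-bimonad on $_R\Ab_R$. \emph{Step 4.} $T_q$ is cocontinuous --- as $R\smpq\under$ it is a colimit of the cocontinuous functor $N\mapsto R\smp N\cong R\oR TN$ --- and $_R\Ab_R$ is locally presentable, so $T_q$ has a right adjoint. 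By the Theorem of \cite{Sz: EM}, an opmonoidal monad on $_R\Ab_R$ whose underlying endofunctor is a left adjoint is precisely the monad $\under\am{R^e}H$ of a right $R$-bialgebroid $H$; this produces a bialgebroid from the skew-monoidal structure.

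Finally, the two assignments must be mutually inverse. From $T_q\cong\under\am{R^e}H$ and the equivalence $\E_{T_q}\simeq(\Ab_R)_T$ one gets $T\cong\under\oR H$ as monads on $\Ab_R$ (the $R$-ring $s^H:R\to H$, with $H$ a left $R$-module via $\lambda_2$), so $M\smp N\cong M\oR TN\cong M\am{R_1}(N\am{R_2}H)$; transporting $\gamma,\eta,\eps$ across these isomorphisms recovers exactly the data of Section~\ref{s: bgd}, with $\gamma_{R,R,R}$ the Galois map of $H$, $\eta$ governed by the source map and $\eps$ by the counit $\eps^H$, and --- since the \cite{Sz: EM} correspondence between $\oR$-bimonads on $_R\Ab_R$ and right bialgebroids is a bijection matching $\under\am{R^e}H$ with $H$ --- this inverts the construction of Section~\ref{s: bgd}. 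I expect the main obstacle to be precisely this last bookkeeping: tracking the four $R$-actions on $H=R\smp R$ through the chain of Eilenberg--Watts and representability isomorphisms and confirming that the reconstructed comultiplication, counit, source and target (coming from $\mu_R$, $\delta_R$, $\eta$, $\eps$) coincide with those of the bialgebroid returned by \cite{Sz: EM}; the heptagon/tetragon verification of Step~3 is the other laborious, though purely mechanical, point.
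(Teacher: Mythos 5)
Your overall route coincides with the paper's: Eilenberg--Watts to write $M\smp N\cong M\oR TN$, descent to the quotient structure $\smpq$ on $_R\Ab_R$, the Representability Theorem to make $T_q$ a bimonad, left-adjointness of $T_q$, and then \cite{Sz: EM}. The first direction is fine, and your Step 4 (cocontinuity of $T_q$ plus local presentability of $_R\Ab_R$) is a legitimate alternative to the paper's use of right closedness and a second Eilenberg--Watts application to identify $T_qN\cong N\am{R^e}H$ explicitly, though the explicit form is what makes the final reconstruction of $H$ concrete.

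The genuine gap is Step 3, which you describe as a ``purely mechanical'' chase from (\ref{q-hexa}) and (\ref{q-squares}). Those diagrams only \emph{transfer} the heptagon and tetragon from $(\smp,v)$ on $\Ab_R$ to $(\smpq,w)$ on $_R\Ab_R$, and only if $w$ is the isomorphism induced by $v$ through the coequalizers $q_{M,N}$ --- not an unrelated isomorphism produced by a fresh Eilenberg--Watts application as in your Step 2, for which there would be nothing to chase. They do not prove the heptagon (\ref{P** spec}) and tetragon (\ref{tet-counit spec}) for $v$ itself, and these are substantive coherence conditions relating $v$ to $\gamma$ and $\eps$ that do not follow formally from the skew-monoidal axioms. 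The paper's argument, which is the actual heart of the converse direction, is: normalize $v$ so that $v_{R,N}=\luni_{TN}$ (this normalization is also what makes $v$ natural in $N$, a point you assert without justification --- Eilenberg--Watts gives naturality in $M$ only); observe that with this normalization both coherence equations degenerate to identities when the outer variable is set to $R$; and conclude for general arguments because every right $R$-module is a colimit of copies of $R$ and all functors occurring on either side of the two equations are cocontinuous in that variable. Without this normalization-plus-generator step your Step 3 stalls; it is the key idea rather than bookkeeping.
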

\begin{proof}
In Section \ref{s: bgd} we have shown how bialgebroids over $R$ give rise to right-monoidal structures on $\Ab_R$.
The definition of the right-monoidal product (\ref{smp-bgd}) makes it obvious that it is closed.

Let $\smp$ be a closed right-monoidal structure on $\Ab_R$. 
Since $\Ab_R$ is cocomplete and $\under\smp N$ is left adjoint,  by the Eilenberg-Watts Theorem there is an isomorphism
\[
v_{M,N}:M\oR TN\iso M\smp N
\]
natural in $M$ for each $N$ where $\oR$ stands for the action on the monoidal category $_R\Ab_R$ on $\Ab_R$. 
(Note that the left $R$-module structure of $TN=R\smp N$ is defined by the endomorphism ring  of the right-regular module $R$,
i.e., by $\lambda_1$ in the notation of Section \ref{s: E-obj}.)
Without loss of generality we may assume that $v$ also satisfies the normalization
\begin{equation} \label{v_R,N}
v_{R,N}=\luni_{TN}
\end{equation}
for each $N$. (Otherwise compose it with $(M\ot (\luni_{TN}\ci v_{R,N}^{-1}))$.) Then considering $N\mapsto (\under\smp N)$ as the object map of a functor $\Ab_R\to\End\Ab_R$ the $v_{M,N}$ becomes natural in $N$, too.
Now substituting $v$ for $w$ in the heptagon (\ref{P** spec}) with $L=R$ we obtain an identity due to (\ref{v_R,N}). 
Similarly, (\ref{tet-counit spec}) with $w=v$ and $M=R$ is an identity. Therefore, using that $R$ is a generator, it follows
that both  (\ref{P** spec}) and (\ref{tet-counit spec}) are identities for all values of their arguments $L$, $M$ and $N$.

Next we want to construct a $w$ for the quotient right-monoidal structure $\smpq$ (see Proposition \ref{pro: gamma+}) on the monoidal category $_R\Ab_R$. There is a unique $w$ such that for all $M,N\in\,_R\Ab_R$
\begin{equation}
\begin{CD}
M\oR TN@>v_{M,N}>> M\smp N\\
@V{M\oR q_{R,N}}VV @VV{q_{M,N}}V\\
M\oR T_qN@>w_{M,N}>>M\smpq N
\end{CD}
\end{equation}
since $q_{M,N}$ is a coequalizer. $w_{M,N}$ is invertible since $M\oR\under$ preserves coequalizers.
Now use (\ref{q-hexa}), (\ref{q-squares}) to show that the heptagon  (\ref{P** spec}) and tetragon (\ref{tet-counit spec}) for $v$ and $\smp$ implies the heptagon and tetragon for $w$ and $\smpq$. Then by Theorem \ref{thm: representability} $T_q$ is
a bimonad on $_R\Ab_R$. Thus we could conclude by \cite[Theorem 4.5]{Sz: EM} that
$T_q$ is the bimonad of a bialgebroid if we knew that $T_q$ is left adjoint. Using that $\smp$ is also right closed the
Eilenberg-Watts Theorem provides an isomorphism $M\smp N\cong N\am{R_2}(M\smp R)$; hence $TN\cong N\am{R_2}H$
where $H=R\smp R$. The quotient
\[
T_qN=\ \int^{\rho_1\lambda_2}TN\ \cong\ \int^{\rho_1\lambda_N}(N\am{R_2}H)\ \cong\ N\am{R^e}H
\]
amalgamates the left $R$-action on $N$ with the right $R$-action $\rho_1$ on $H$ which, together with $\am{R_2}$, amounts
to taking tensor product over $R^e=R^\op\ot R$ by considering $N$ as right $R^e$-module and $H$ as left $R^e$-module
via $(r'\ot r)\cdot h=\rho_1(r')\ci\lambda_2(r) (h)$. As such, $T_q$ is left adjoint.
\end{proof}

Combining the above result with Mitchell's Theorem on the characterization of module categories we can obtain
a characterization of skew-monoidal categories of bialgebroids without explicit reference to the base ring.
\begin{cor}
A right monoidal category $\bra\M,\smp,R,\gamma,\eta,\eps\ket$ is equivalent to the right-monoidal category of a right-bialgebroid iff
\begin{enumerate}
\item $\M$ is cocomplete abelian,
\item $\smp$ preserves colimits in both arguments
\item and $R$ is a small projective generator.
\end{enumerate}
\end{cor}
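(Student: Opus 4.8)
The plan is to deduce this Corollary from Theorem \ref{thm: bgd} together with Mitchell's theorem on the characterization of module categories. For the ``only if'' direction I would just check that the right-monoidal category of a right bialgebroid $H$ over a ring $R$ satisfies (i)--(iii): $\Ab_R$ is a Grothendieck category, hence cocomplete abelian; the product $M\smp N=M\oR(N\oR H)$ of formula (\ref{smp-bgd}) is, in each variable separately, a composite of tensor-product functors and therefore preserves all colimits; and the right-regular module $R_R$ is a finitely generated projective generator of $\Ab_R$, in particular small and projective.

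For the converse, let $\bra\M,\smp,R,\gamma,\eta,\eps\ket$ satisfy (i)--(iii) and put $E:=\End_\M R$. By Mitchell's theorem the functor $\M(R,\under):\M\to\Ab_E$ is an equivalence of categories carrying $R$ to the right-regular $E$-module $E_E$; this is precisely where hypotheses (i) and (iii) are consumed. I would then transport the data $\smp,\gamma,\eta,\eps$ along this equivalence (and its chosen inverse) to obtain a right-monoidal structure $\smp'$ on $\Ab_E$ with unit object $E_E$, using hypothesis (ii) to see that $\smp'$ again preserves colimits in both arguments, since the equivalence and its inverse do.

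The next step is to observe that $\smp'$ is automatically \emph{closed}. Indeed $\Ab_E$ is Grothendieck, hence cocomplete, well-powered and equipped with a generator, so the Special Adjoint Functor Theorem guarantees that any colimit-preserving endofunctor of $\Ab_E$ has a right adjoint; applying this to $\under\smp' N$ and to $N\smp'\under$ for each $N$ gives left and right closedness. Now Theorem \ref{thm: bgd} applies directly: $\bra\Ab_E,\smp',E_E,\gamma',\eta',\eps'\ket$ is the right-monoidal category of a right bialgebroid over $E$, whence $\M$ with $\smp$ is equivalent, as a right-monoidal category, to the right-monoidal category of a bialgebroid.

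The parts that need care rather than being genuinely hard are, first, making precise the notion of equivalence of right-monoidal categories and verifying that transporting $\gamma,\eta,\eps$ along the Mitchell equivalence really does produce a right-monoidal structure with unit $E_E$ (formal but worth spelling out), and second, the bookkeeping that the small-projective-generator hypothesis is exactly what lets $E_E$ appear as the unit. The one ``external'' ingredient is the adjoint functor theorem used to obtain closedness for free; everything else sits on top of Theorem \ref{thm: bgd}.
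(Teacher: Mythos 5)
Your proposal is correct and follows exactly the route the paper intends: the paper offers no written proof beyond the remark that the Corollary is obtained by ``combining Theorem \ref{thm: bgd} with Mitchell's Theorem,'' and your argument is precisely that combination, with the adjoint functor theorem (or, equivalently, Eilenberg--Watts) supplying the closedness hypothesis needed to invoke Theorem \ref{thm: bgd}.
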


\section{Monoidal (lax) comonads}

In this last section, we discuss two results that lead to monoidality of the canonical lax comonad of a skew-monoidal category.

\subsection{The corepresentability theorem}

We would like to characterize the skew-monoidal categories that can be ``corepresented'' in the sense of Definition \ref{def: corep}
by a monoidal comonad. For that purpose we dualize the construction of Section \ref{s: rep}.

Let $\bra\E,\smp,R\ket$ be a right-monoidal category the dual $\bra\E^\op,\smp^\op,R\ket$ of which is representable by an opmonoidal monad in the right-monoidal category $\bra\E^\op,\ot,R\ket$. This means precisely that the original $\smp$-structure
is corepresentable by a monoidal comonad w.r.t the left-monoidal structure $\ot$. So we can speak about tetrahedral homomorphisms $t$ as natural transformations
\[
t_{L,M,N}\ :\ N\smp(L\ot M)\ \to\ L\ot(N\smp M)
\]
satisfying the pentagons
\begin{align*}
(K\ot t_{L,M,N})\ci t_{K,L\ot M,N}\ci(N\smp\asso^{-1}_{K,L,M})&=\asso^{-1}_{K,L,N\smp M}\ci t_{K\ot L,M,N}\\
(K\ot\gamma_{N,M,L})\ci t_{K,M\smp L,N}\ci(N\smp t_{K,L,M})&=t_{K,L,N\smp M}\ci\gamma_{N,M,K\ot L}
\end{align*}
and the triangles
\begin{align*}
\luni_{N\smp M}\ci t_{R,M,N}&=N\smp \luni_M\\
t_{M,N,R}\ci\eta_{M\ot N}&=M\ot\eta_N\,.
\end{align*}
(We have written $t$ exactly for what it was in Section \ref{s: rep}, without even permuting indices, now using the opposite
composition and opposite skew-monoidal product.) Such a $t$ is then a tetrahedral isomorphism if
\[
w_{M,N}:=t_{M,R,N}\ci(N\smp\runi^{-1}_M)\ :\ N\smp M\ \to\ M\ot QN
\]
is a natural isomorphism. 

Dualizing Proposition \ref{pro: tet-w} we obtain that $t$ is a tetrahedral isomorphism if and only if
$w$ satisfies the following heptagon and tetragon equations:
\begin{align}
\label{coheptagon}
w_{L,N\smp M}\ci\gamma_{N,M,L}&=(L\ot\gamma_{N,M,R})\ci(L\ot w^{-1}_{QM,N})\ci\asso^{-1}_{L,QM,QN}\ci 
w_{L\ot QM,N}\ci(N\smp w_{L,M})\\
\label{cotetragon}
w_{M,R}\ci\eta_M&=(M\ot \eta_R)\ci\runi^{-1}_M\,.
\end{align}

The fusion operators can be defined as the composite natural transformation
\[
h_{M,N}:=Qw_{M,N}\ci\gamma_{N,M,R}\ci w^{-1}_{QM,N}\ :\ QM\ot QN\ \to\ Q(M\ot QN)\,.
\]
This allows to write up the would-be monoidal structure for the canonical comonad $Q=\bra \under\smp R,\delta,\eps\ket$
as follows
\begin{align}
Q_{M,N}&:=Q(M\ot\eps_N)\ci h_{M,N}\ :\ QM\ot QN\ \to\ Q(M\ot N)\\
Q_0&:=\eta_R\ :\ R\ \to\ QR\,.
\end{align}
Then by dualizing Theorem \ref{thm: representability} we obtain the following corepresentability theorem:
\begin{thm} \label{thm: corepresentability}
Let $\E$ be a category equipped with a right-monoidal structure $\smp$ and a monoidal structure $\ot$ with a common unit object $R$. Then the following statements are equivalent.
\begin{enumerate}
\item $\smp$ is $\ot$-corepresentable, i.e., there is a $\ot$-monoidal comonad $C$ and a twist-isomorphism 
$M\smp N\iso N\ot CM$ of right monoidal structures.
\item There is a natural isomorphism $w_{M,N}:N\smp M\iso M\ot QN$ satisfying the heptagon and tetragon equations (\ref{coheptagon}) and (\ref{cotetragon}) where $Q$ is the canonical comonad of the $\smp$-structure.
\item There is a tetrahedral isomorphism $t_{L,M,N}:N\smp(L\ot M)\to L\ot(N\smp M)$. 
\end{enumerate}
\end{thm}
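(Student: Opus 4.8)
The plan is to deduce the whole statement from Theorem \ref{thm: representability} by transporting it along the passage to the opposite category, reusing the translation already carried out in the paragraphs preceding this theorem. Concretely, on the underlying category $\E^\op$ I would take the right-monoidal product $X\smp'Y:=Y\smp X$ obtained from $\smp$ by reversing both the composition and the order of the factors (the $\M\mapsto\M^{\op,\rev}$ operation, under which $\eta$ and $\eps$ exchange roles); its canonical monad $R\smp'\under=\under\smp R$ is precisely $Q^\op$, the opposite of the canonical comonad $Q$ of the $\smp$-structure. Alongside it I keep $\ot$ as a (genuinely) monoidal structure on $\E^\op$, with its arguments \emph{unchanged}. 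A $\ot$-monoidal comonad $C$ on $\E$ is the same data as a $\ot$-opmonoidal monad $C^\op$ on $\E^\op$, and the corepresentation twist $M\smp N\iso N\ot CM$ of Definition \ref{def: corep}, read in $\E^\op$, is exactly the twist exhibiting $\smp'$ as representable by the $\ot$-bimonad $C^\op$ in the sense of Definition \ref{def: repr}: the induced product of Proposition \ref{pro: O-ind}, $M\odot N:=M\ot ON$, becomes after reversal of arguments the $N\ot CM$ of Definition \ref{def: corep}. Thus statement (i) here is Theorem \ref{thm: representability}(i) applied to $(\E^\op,\smp',\ot)$.

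Next I would check that the remaining two items correspond under the same dictionary. The natural isomorphism $w_{M,N}:M\ot(R\smp N)\to M\smp N$ of Theorem \ref{thm: representability}(ii) becomes, upon reversing arrows and using $R\smp'\under=Q^\op$, the present $w_{M,N}:N\smp M\iso M\ot QN$; the heptagon (\ref{P** spec}) becomes (\ref{coheptagon}) and the tetragon (\ref{tet-counit spec}) becomes (\ref{cotetragon}). The tetrahedral isomorphism $\tet_{L,M,N}:L\ot(M\smp N)\iso(L\ot M)\smp N$ of (iii) becomes $t_{L,M,N}:N\smp(L\ot M)\iso L\ot(N\smp M)$, precisely because with $\smp'$ reversing arguments one has $(L\ot M)\smp N=N\smp'(L\ot M)$ and $L\ot(M\smp N)=L\ot(N\smp'M)$. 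Hence the equivalence (i)$\Leftrightarrow$(ii)$\Leftrightarrow$(iii) is literally the equivalence of the three conditions of Theorem \ref{thm: representability} read in $\E^\op$; the fusion operator $h$ and the structure maps $Q_{M,N}$, $Q_0=\eta_R$ displayed above are the images under $(-)^\op$ of the fusion operator $h$ and the opmonoidal structure $T^{M,N}$, $T^0=\eps_R$ of Section \ref{s: rep}, so that Lemma \ref{lem: H}, Proposition \ref{pro: T opmon} and Proposition \ref{pro: tet-w} dualize to the corresponding facts for $Q$ with no extra work.

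The part that needs care --- and the only part --- is the variance bookkeeping: one must verify that $\smp$, which is not symmetric, goes to a \emph{right}-monoidal (and not left-monoidal) structure on $\E^\op$ exactly when its arguments are also reversed, while the honestly monoidal $\ot$ is carried by $(-)^\op$ to a monoidal structure with arguments left alone, so that the variance of every structure map appearing in (\ref{coheptagon}), (\ref{cotetragon}) and in the pentagon and triangle axioms for $t$ is the mirror image of its counterpart in Section \ref{s: rep}. Once this dualization functor is pinned down, every displayed identity of the theorem is a Section \ref{s: rep} identity read backwards in $\E^\op$ and nothing new has to be computed; as elsewhere in the paper I would leave the verbatim index-chase confirming the match to the reader. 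The main risk is precisely a slip in this translation, which is why it is worth writing the opposite-category functor out explicitly before invoking it.
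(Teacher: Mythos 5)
Your proposal is correct and is essentially identical to the paper's own argument: the paper obtains Theorem \ref{thm: corepresentability} precisely by reading Theorem \ref{thm: representability} in the opposite category with the skew product reversed (the $\op,\rev$ dual, under which $T$ becomes $Q$, $\eta$ and $\eps$ trade roles, and $M\ot ON$ becomes $N\ot CM$), after spelling out the same translation dictionary for $t$, $w$, the fusion operator and the (co)monoidal structure maps. Your variance bookkeeping agrees with the paper's, which likewise leaves the verbatim index-chase implicit.
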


One may try to apply this corepresentation theorem to a situation dual to that of Section \ref{s: closed rep}, e.g., by considering
categories of right comodules of a coalgebra and coclosed skew-monoidal structures on them. 
Unfortunately this dualization seems to require more than what is 
known, to the present author, about bicoalgebroids \cite{Brz-Mil, Balint}.

\subsection{Monoidality of the lax comonad on $_R\sfM_R$}

If $\E$ is a monoidal category then monoidality of the lax comonad $\bfQ:\Delta^\op\to \End \E$ means the structure on $\bfQ$ that allows its factorization through the 
faithful functor $\End^\ot\E\into\End\E$ which forgets monoidality of monoidal endofunctors and their monoidal natural transformations. 
If $\E$ is the category of $E$-objects of a complete right-monoidal category $\M$ and $\bfQ$ is the lax comonad on $\E$ constructed in Section \ref{s: lax Q} then one would like to find conditions on a monoidal structure $\ot$ on $\E$ which implies monoidality of $\bfQ$. For the monad $T_q$ the existence of tetrahedral isomorphism between $\ot$ and $\smpq$ on $\E$ implied its opmonoidality. Unfortunately we do not know analogous conditions that would imply monoidality of $\bfQ$.
However, if $\E$ is the category $_R\Ab_R$ of bimodules over a ring $R$ and $\bfQ$ is the lax comonad of a right $R$-bialgebroid
one expects that monoidality of $\bfQ$ follows without any additional conditions. 

As the proof of \cite[Proposition 4.2]{Day-Street} indicates, in order to construct the monoidal structure of $\bfQ$, it is not sufficient to work within $_R\Ab_R$, it has to be embedded into a monoidal bicategory of bimodules. The basic idea of the 
proof of the next Theorem is that of the above mentioned construction of \cite{Day-Street} although some differences in the conventions may disguise it. 
\begin{thm} \label{thm: bfQ monoidal}
For a commutative ring $k$ and a $k$-algebra $R$ let $\bra\sfM_R,\smp,R,\gamma,\eta,\eps\ket$ be a closed right-monoidal
structure on the category of right $R$-modules. Then the lax comonad $\bfQ$ on $_R\sfM_R$ defined in Proposition \ref{pro: bfQ} is monoidal and the Eilenberg-Moore category $_R\sfM_R^\bfQ$ has a unique monoidal structure such that the forgetful functor
$_R\sfM_R^\bfQ\to\,_R\sfM_R$ is strict monoidal.
\end{thm}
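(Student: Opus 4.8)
The plan is to reduce to a concrete bialgebroid and to recognise $\bfQ$ as the minimalistic packaging of the comonoid in a lax monoidal category of \cite{Day-Street}. By Theorem \ref{thm: bgd} the given closed right-monoidal structure is that of a right $R$-bialgebroid $H=R\smp R$; closedness together with $k$-linearity of $\sfM_R$ lets us apply the Eilenberg--Watts theorem to obtain $QM\cong M\am{R_1}H$ and, more generally, to identify the functor $\bfQ_n$ of Proposition \ref{pro: bfQ} with the $n$-fold Takeuchi product $M\mapsto M\x_R H\x_R\dots\x_R H$ ($n$ copies of $H$), where $M$ is regarded as an $R$-bimodule via its $\lambda_1$ left action and its ambient right action. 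Under this identification the equalizer defining $\phi\bfQ_nM$ is exactly the one carving out the iterated $\x_R$-product inside $M\oR H\oR\dots\oR H$, the simplicial maps $\delta^n_i$, $\eps^n_i$ become insertions of $\Delta^\sH$ and $\eps^\sH$, and $\nu^{m,n}$ becomes the canonical associativity isomorphism of $\x_R$.

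First I would set up, following \cite{Day-Street}, a monoidal bicategory of $k$-linear bimodules in which these Takeuchi products live as honest products; as stressed there, $\x_R$ is only lax monoidal, so the bicategory is needed even to write down the structure maps coherently. Then, using that $H$ is a bialgebroid --- equivalently, by \cite{Sz: EM}, that $\under\oR H$ is opmonoidal, equivalently that $(H,\Delta^\sH,\eps^\sH)$ is a comonoid in the lax monoidal category of $R^e$-rings under $\x_R$ --- I would equip each $\bfQ_n$ with a lax $\oR$-monoidal structure $\bfQ_nM\oR\bfQ_nN\to\bfQ_n(M\oR N)$ by multiplying the $n$ pairs of $H$-factors using the iterated multiplication $m^\sH$, with unit $R\to H\x_R\dots\x_R H=\bfQ_nR$ built from $1^\sH$ in each slot. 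The lax-monoidal-functor axioms for $\bfQ_n$ are then associativity and unitality of $m^\sH$; that $\delta^n_i$, $\eps^n_i$ and $\nu^{m,n}$ are monoidal natural transformations is the bialgebroid compatibility between $m^\sH$ and $\Delta^\sH$, $\eps^\sH$ together with coassociativity and counitality. This makes $\bfQ$ a monoidal lax comonad, and the morphism $\bra\phi,\zeta\ket:\bfQ\to Q$ is automatically compatible with these structures, since it merely forgets the $\x_R$-conditions.

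For the Eilenberg--Moore category I would invoke the general mechanism: a monoidal lax comonad $\bfQ$ on a monoidal category $\E$ yields a monoidal structure on $\E^\bfQ$ with strict monoidal forgetful functor, by declaring $(M,\alpha)\oR(N,\beta)$ to be $M\oR N$ with coaction $M\oR N\xrightarrow{\alpha_n\oR\beta_n}\bfQ_nM\oR\bfQ_nN\to\bfQ_n(M\oR N)$ and unit object $R$ with the coaction $R\to\bfQ_nR$ built from $1^\sH$; that these are $\bfQ$-comodules uses that $\nu$, $\iota$, $\delta^n_i$, $\eps^n_i$ are monoidal, and the associativity and unit constraints of $\E$ lift to $\bfQ$-comodule maps by the same coherence. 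Uniqueness is immediate: strict monoidality of the forgetful functor forces the underlying bifunctor and unit to be $\oR$ and $R$ and, by naturality and the comodule-map condition, forces the coaction on each tensor product to be the composite above, leaving no freedom. Via Theorem \ref{thm: equivalence of lax comodule cats} this also recovers the familiar monoidal structure on the category of right comodules over the $R$-coring $H$.

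The main obstacle is the middle step: making the iterated Takeuchi products and the multiplication maps on them precise and checking their coherence, since $\x_R$ is genuinely only lax monoidal --- one must verify that the chosen maps exist (land in the appropriate subobjects), are natural, and satisfy all the pentagon and triangle bookkeeping. After translating conventions this is exactly the content of the Day--Street identification of right $R$-bialgebroids with such comonoids; what really takes effort is matching the $E$-object labelling ($\lambda_i$ versus $\rho_i$) of Sections \ref{s: E-obj}--\ref{s: lax Q} with the left and right $R^e$-module structures used there, rather than any substantially new computation.
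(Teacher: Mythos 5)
Your proposal is correct and follows essentially the same route as the paper: both reduce via Theorem \ref{thm: bgd} to a right $R$-bialgebroid $H=R\smp R$, identify $\bfQ_n$ with the $n$-fold iterated Takeuchi product following \cite{Day-Street}, obtain the lax monoidal structure on $\bfQ_n$ from the multiplication of $H$, and deduce monoidality of the cosimplicial maps from the fact that $\Delta^\sH_\x$ and $\eps^\sH$ are algebra maps. The only difference is technical packaging: where you defer the coherence bookkeeping for $\x_R$ to the Day--Street bicategory, the paper makes it concrete by writing $\bfQ_nM\cong\Hom_{R_{n+1}}(P_n,\ch^nM\bo P_n)$ and transporting the monoidal structure of the honest monoidal functors $\ch^n$ across the hom-tensor adjunction via the evaluation maps $\ev^n$.
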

\begin{proof}
Let $\E(m,n)$ be the category of $R_n$-$R_m$-bimodules where $R_n:=R\ot (R^\op\ot R)^{\ot(n-1)}$ and $\ot$ denotes tensor product over $k$. Tensor product over $R_n$ is denoted by $\bo$ for any $n$. 

Let $H$ denote $R\smp R$ as an $R^\op\ot R$-bimodule. 
Since $H$ is a monoid in the category of $R^\op\ot R$-bimodules, tensoring with $H$ ($n$ times) defines monoidal functors
$\ch^n:\E(1,l)\to\E(n+1,n+l)$ given recursively by $\ch^0M:=M$ and $\ch^nM:=\ch^{n-1}M\ot H$ if $n>0$.

Let $P\in\E(1,2)$ be the $k$-module $R\ot R$ equipped with $(R\ot R^\op\ot R)$-$R$-bimodule structure
\[
(r_1\ot r' \ot r_2)\cdot(x\ot y)\cdot r_3:=r_1 xr'\ot r_2yr_3\,.
\]
We shall also need the $n$-th iterate of $P$
\[
P_1:=P\quad\text{and}\quad P_n:=(P\ot R_{n-1})\bo P_{n-1}\in\E(1,n+1), \qquad n>1\,.
\]

Since $\under\smp N$ is left adjoint for each $N\in\sfM_R$, there is an isomorphism $M\smp N\iso M\oR (R\smp N)$, natural in $M$, where the left $R$-module structure of $R\smp N$ is given by $\lambda_1$. Setting $N=R$ we obtain 
$QM\iso M\oR H=\ch M\bo P$ and iterating $Q^nM\iso \ch^nM\bo P_n$. 

Using that $P_n\bo\under: \E(1,1)\to\E(1,n+1)$ has a right adjoint the object map of the lax comonad $\bfQ$ can be given by the
functors 
\[
M\mapsto \bfQ_nM =\Hom_{R_{n+1}}(P_n,Q^nM)\iso \Hom_{R_{n+1}}(P_n,\ch^n M\bo P_n)
\]
The counit of this hom-tensor adjunction, i.e., the evaluation $\ev^n:P_n\bo\Hom_{R_{n+1}}(P_n,\under)\to \under$, allows us to define $(\bfQ_n)_{M,N}$ by the following commutative diagram (in which the associators for $\bo$ are
suppressed and $\ev^n_M$ is written instead of $\ev^n_{H^nM\bo P_n}$ for brevity)
\begin{equation} \label{eq: multip of bfQ}
\begin{CD}
P_n\bo\bfQ_nM\bo\bfQ_nN@>(1\bo\ev^n_N)\ci(\ev^n_M\bo 1)>>\ch^nM\bo\ch^nN\bo P_n\\
@V{1\bo(\bfQ_n)_{M,N}}VV @VV{(\ch^n)_{M,N}\bo 1}V\\
P_n\bo\bfQ_n(M\bo N) @>\ev^n_{M\bo N}>>\ch^n(M\bo N)\bo P_n
\end{CD}
\end{equation}
The unit $(\bfQ_n)_0:R\to \bfQ_n R$, in turn, is defined by the unit of $\ch^n$ via the diagram
\begin{equation} \label{eq: unit of bfQ}
\begin{CD}
P_n\bo R@>\sim>>R_{n+1}\bo P_n\\
@V{1\bo (\bfQ_n)_0}VV @VV{(\ch^n)_0\bo 1}V\\
P_n\bo\bfQ_n R@>\ev^n_R>>\ch^nR\bo P_n
\end{CD}
\end{equation}
That $(\bfQ_n)_{M,N}$ and $(\bfQ_n)_0$ make $\bfQ_n$ a monoidal functor is now a simple consequence of monoidality of the 
functor $\ch^n$.

Next we have to show that $\bfQ_f$ is a monoidal natural transformation for all $f:m\to n$ in $\Delta$.
For $f=i+(2\to 1)+(n-1-i)$ this means showing commutativity of the diagrams
\[
\begin{CD}
\bfQ_nM\bo\bfQ_nN@>(\bfQ_n)_{M,N}>>\bfQ_n(M\bo N)\\
@V{\delta_n^iM\bo\delta_n^iN}VV @VV{\delta_n^i(M\bo N)}V\\
\bfQ_{n+1}M\bo\bfQ_{n+1}N@>(\bfQ_{n+1})_{M,N}>>\bfQ_{n+1}(M\bo N)
\end{CD}
\qquad\qquad
\begin{CD}
R@>(\bfQ_n)_0>>\bfQ_n R\\
@| @VV{\delta_n^i}V\\
R@>(\bfQ_{n+1})_0>>\bfQ_{n+1}R
\end{CD}
\]
To make a long story short, we already know by Theorem \ref{thm: bgd} that $H$ is a right $R$-bialgebroid therefore the factorization of the comultiplication $\Delta^\sH:H\rarr{\Delta^\sH_\x}H\x H\into H\oR H$ through the Takeuchi product is an algebra map $\Delta^\sH_\x$. Commutativity of the above two diagrams follows precisely from multiplicativity and unitality of $\Delta^\sH_\x$. Similar observation for the counit leads to monoidality of $\eps_n^i$. This defines the required factorization of
the functor $\bfQ:\Delta^\op\to\End\E(1,1)$ through the category $\End^\ot \E(1,1)$ of monoidal endofunctors and monoidal natural transformations.

It remains to show that the monoidal structure of $\bfQ$, namely $\nu$ and $\iota$, consists also of monoidal natural transformations. For $\iota$ there is nothing to prove since it can be chosen to be the identity as we have seen in the proof of Proposition \ref{pro: bfQ}. For $\nu$ this is the commutativity of the diagrams
\begin{equation} \label{nu monoidal}
\begin{CD}
\bfQ_m\bfQ_nM\bo\bfQ_m\bfQ_nN@>>>\bfQ_m\bfQ_n(M\bo N)\\
@V{\nu^{m,n}M\bo\nu^{m,n}N}VV @VV{\nu^{m,n}(M\bo N)}V\\
\bfQ_{m+n}M\bo\bfQ_{m+n}N@>>>\bfQ_{m+n}(M\bo N)
\end{CD}
\qquad\qquad
\begin{CD}
R@>>>\bfQ_m\bfQ_n R\\
@| @VV{\nu^{m,n}R}V\\
R@>>>\bfQ_{m+n}R
\end{CD}
\end{equation}
Since $P_{m+n}=(P_n\ot(R^\op\ot R)^m)\bo P_m$, we obtain the following multiplicativity rule for the evaluation:
\begin{equation} \label{eq: multip of ev}
\begin{CD}
P_{m+n}\bo\bfQ_m\bfQ_n M@>1\bo\ev^m_{\bfQ_nM}>>H^m(P_n\bo\bfQ_n M)\bo P_m\\
@V{1\bo\nu^{m,n}_M}VV @VV{H^m\ev^n_M\bo 1}V\\
P_{m+n}\bo\bfQ_{m+n}M@>\ev^{m+n}_M>>H^{m+n}M\bo P_{m+n}
\end{CD}
\end{equation}
Using (\ref{eq: multip of bfQ}) and (\ref{eq: multip of ev}) one can show that
\begin{gather*}
\ev^{m+n}_{M\bo N}\ci(P_{m+n}\bo\nu^{m,n}_{M\bo N})\ci(P_{m+n}\bo(\bfQ_m\bfQ_n)_{M,N})=\\
=\ev^{m+n}_{M\bo N}\ci(P_{m+n}\bo(\bfQ_{m+n})_{M,N})\ci(P_{m+n}\bo\nu^{m,n}_M\bo\nu^{m,n}_N)
\end{gather*}
from which the first diagram in (\ref{nu monoidal}) follows by adjunction. As for the second diagram one utilizes the fact that $\ch^{m+n}=\ch^m\ch^n$ in diagram (\ref{eq: unit of bfQ}) to obtain 
\[
\ev^{m+n}_R\ci(1\bo(\bfQ_{m+n})_0)=(H^m\ev^n_R\bo 1)\ci (H^m(1\bo(\bfQ_n)_0)\bo 1)\ci(1\bo\ev^m_R)\ci(1\bo 1\bo(\bfQ_m)_0)
\]
from which the statement can be obtained by rewriting the RHS using (\ref{eq: multip of ev}). This finishes the proof
of monoidality of the lax comonad. The way the Eilenberg-Moore forgetful functor becomes strict monoidal is standard and 
needs no explanation.
\end{proof}

\end{document}